\newcommand{\grad}{\mathbb{G}}
\renewcommand*\env@cases[1][1.2]{%
  \let\@ifnextchar\new@ifnextchar
  \left\lbrace
  \def\arraystretch{#1}%
  \array{@{\,}c@{\ }l@{}}%
}
\numberwithin{equation}{section}
\newcommand{\OmegaT}{{\Omega_T}}
\newcommand{\LB}{\lambda_{\mathrm B}}
\DeclareMathAlphabet{\mathup}{OT1}{\familydefault}{m}{n}
\newcommand{\eps}{\varepsilon}
\DeclarePairedDelimiter{\abs}{\lvert}{\rvert}
\DeclarePairedDelimiter{\norm}{\lVert}{\rVert}
\DeclarePairedDelimiter{\bra}{(}{)}
\DeclarePairedDelimiter{\pra}{[}{]}
\DeclarePairedDelimiter{\set}{\{}{\}}
\newcommand{\customlabel}[2]{%
   \protected@write \@auxout {}{\string \newlabel {#1}{{#2}{\thepage}{#2}{#1}{}} }%
   \hypertarget{#1}{}
}
\numberwithin{figure}{section}
\def\calD{{\mathcal D}} \def\calE{{\mathcal E}} 
  \def\calL{{\mathcal L}}
\def\calM{{\mathcal M}}  
\def\calP{{\mathcal P}}  \def\calR{{\mathcal R}}
\def\calS{{\mathcal S}}  
  \def\calX{{\mathcal X}}
\def\rmd{{\mathrm d}}
  \def\rmx{{\mathrm x}}
  \def\rmC{{\mathrm C}}
\def\rmD{{\mathrm D}}  
 \def\rmH{{\mathrm H}} 
  \def\rmL{{\mathrm L}}
\def\rmM{{\mathrm M}}
  \def\rmX{{\mathrm X}}
  \def\sfC{{\mathsf C}}
  \def\scrL{{\mathscr  L}}
  \def\scrR{{\mathscr  R}}
\def\bbG{{\mathbb G}}  
 \def\bbN{{\mathbb N}} 
  \def\bbR{{\mathbb R}}
 \def\bbT{{\mathbb T}} 
 \def\bbZ{{\mathbb Z}}
\newtheorem{theorem}{Theorem}[section]
\newtheorem{lemma}[theorem]{Lemma}
\newtheorem{corollary}[theorem]{Corollary}
\newtheorem{definition}[theorem]{Definition}
\newtheorem{proposition}[theorem]{Proposition}
\newtheorem{remark}[theorem]{Remark}
\newtheorem{example}[theorem]{Example}
\newtheorem{assumption}[theorem]{Assumption}
\newcommand{\oset}[3][0ex]{%
  \mathrel{\mathop{#3}\limits^{
    \vbox to#1{\kern-2\ex@
    \hbox{$\scriptstyle#2$}\vss}}}}
\DeclareMathOperator*{\esssup}{ess\,sup}
\DeclareMathOperator{\diag}{diag}
\DeclareMathOperator{\shift}{S}
\DeclareMathOperator{\nablaDDiff}{\overline{\nabla}}
\DeclareMathOperator{\nablaDReact}{\Gamma}
\DeclareMathOperator{\nablaDisc}{\overline{\bbG}}
\DeclareMathOperator{\nablaCont}{\bbG}
\DeclareMathOperator{\nablaCReact}{\Gamma}
\DeclareMathOperator{\nablaCDiff}{\nabla}
\DeclareMathOperator{\divDDiff}{\overline{\div}}
\DeclareMathOperator{\divCDiff}{\div}
\definecolor{rot}{rgb}{1.000,0.000,0.000}
\newcommand{\N}{\mathbb{N}}
\newcommand{\R}{\mathbb{R}}
\newcommand{\cE}{\mathcal{E}}
\newcommand{\cR}{\mathcal{R}}
\newcommand{\Eu}{\mathrm{E}}
\renewcommand{\L}{\mathrm{L}}
\newcommand{\pcrit}{p_\mathrm{crit}}
\newcommand{\diff}{\mathrm{diff}}
\newcommand{\react}{\mathrm{react}}
\newcommand{\C}{\mathsf{C}}
\newcommand{\vertrule}[1][1.6ex]{\rule{.5pt}{#1}}
\newcommand{\CC}{\mathsf{C}\!\!\vertrule\;}
\newcommand{\fff}{\mathsf f}
\newcommand{\hhh}{\mathsf h}
\newcommand{\AC}{\mathrm{AC}}
\newcommand{\dom}{\mathrm{dom}}
\newcommand{\tar}{\mathrm{tar}}
\newcommand{\discref}{w}
\newcommand{\dd}{\,\mathrm{d}}
\renewcommand{\div}{\mathop{\mathrm{div}}}
\definecolor{rot}{rgb}{1.000,0.000,0.000}
\newcommand{\one}{1\!\!1}
\newcommand{\CE}{\mathrm{CE}}
\newcommand{\lCE}{\overline{\mathrm{CE}}}
\begin{document}

\title{Discrete-to-continuum limit for\\
nonlinear reaction-diffusion systems via \\
EDP convergence for gradient systems}

\author[1]{Georg Heinze}
\author[2]{Alexander Mielke}
\author[3]{Artur Stephan}
\affil[1]{{
\small
Weierstrass Institute, Berlin, Germany, email:
\texttt{georg.heinze@wias-berlin.de}
}}

\affil[2]{{
\small
Weierstrass Institute, Berlin, Germany, email:
\texttt{alexander.mielke@wias-berlin.de}
}}
\affil[3]{{
\small
Technische Universität Wien, Vienna, Austria, email:
\texttt{artur.stephan@tuwien.ac.at}
}}

\date{9 April 2025}

\maketitle

\lhead{EDP convergence for nonlinear RDS }
\rhead{G. Heinze, A. Mielke, A. Stephan}
\chead{\today}

\begin{abstract}
  We investigate the convergence of spatial discretizations for
  reaction-diffusion systems with mass-action law satisfying a detailed balance
  condition.
  Considering systems on the d-dimensional torus, we construct appropriate
  space-discrete processes and show convergence not only on the level of solutions, but
  also on the level of the gradient systems governing the evolutions.
  As an important step, we prove chain rule inequalities for the
  reaction-diffusion systems as well as their discretizations, featuring a non-convex dissipation functional.
  The convergence is then obtained with variational methods by building on the
  recently introduced notion of gradient systems in continuity equation format.

\end{abstract}

\section{Introduction}

The aim of this work is to show convergence of spatial discretizations of a
class of reaction-diffusion systems satisfying mass-action law. Considering finitely many species $\rmX_i$ with $i\in I=\{1,\ldots,i_*\}$ undergoing
finitely many reactions chemical reactions labeled by
$r\in R\coloneqq\{1,\ldots,r_*\}$ and diffusing in a medium, the
reaction-diffusion systems we are considering can in general be written as
\begin{align}
  \label{eq:PDE1Intro}
  \partial_t \rho_i =\div( \delta_i\rho_i \nabla(\log \rho_i {+} V_i)) + \sum_{r\in R}\bra[\bigg]{k^{\text{fw}}_r\prod_{\bar\imath\in I}\rho_{\bar\imath}^{\alpha^r_{\bar\imath}}-k^{\text{bw}}_r \prod_{\bar\imath\in I}\rho_{\bar\imath}^{\beta^r_{\bar\imath}}}(\beta^r_i-\alpha^r_i),\quad \rho_i(0)\coloneqq\rho_i^0.
\end{align}
where $\rho_i=\rho_i(t,x)$ describes the concentration of species $\rmX_i$. The parameters characterizing the diffusion are the diffusion coefficients
$\delta_i>0$ and additional  continuous 
drift potentials $V_i$. The chemical
reactions are prescribed by forward and backward reaction rates
$k^{\text{fw}}_r, k^{\text{bw}}_r >0$, $r\in R$, and stoichiometric
coefficients $\alpha^r_i,\beta^r_i\in(0,\infty)$ that describe the change of
particles of different type, say $\rmX_i$, by the chemical reactions
\[
\forall\, r\in R: \quad \alpha_1^r \rmX_1 + { \cdots +\alpha_{i_*}^r \rmX_{i_*}
\rightleftharpoons \beta_1^r \rmX_1 + \cdots +\beta_{i_*}^r \rmX_{i_*}. } 
\]
For simplicity, we consider the system on the $d$-dimensional torus $\bbT^d$,
though an extension to bounded domains with homogeneous Neumann boundary
conditions should not pose a significant difficulty.

The well-posedness, the long-time behavior, and approximations of systems of
the form~\eqref{eq:PDE1Intro} have been studied with different methods for a
long time. We refer to 
\cite{Alik79LBSR, Rothe_1984, Morg89GESP, desvillettes2007global} 
and references therein for the 
study of classical solutions, i.e., solutions that are globally uniformly
bounded in $\rmL^\infty([0,\infty)\times\bbT^d;\R^I)$. In this context, 
two often used strategies are to obtain bounds for the full 
reaction-diffusion system  by exploiting global a priori bounds for a 
lower order functional such as mass conservation or entropy bounds 
(cf.\ \cite{Alik79LBSR, Morg89GESP}) or by studying the 
space-independent reaction ODE system, which is 
applicable when all the species $\rho_i$, $i\in I$ diffuse with the same 
speed $\delta=\delta_i$. 

However, to the surprise of many, in \cite{Pierre_Schmitt_1997} it was shown
that in the case where species diffuse with different diffusion constants
$\delta_i$,
there may exist no classical solutions to the reaction-diffusion system even
when the involved reactions behave nicely. This furthered the interest in
weaker notions of solutions, like renormalized solutions studied e.g.\ in
\cite{Fischer}. 

In recent years, entropy methods became an important tool for the study 
of reaction-diffusion systems \cite{desvillettes2006exponential, 
desvillettes2007global, desvillettes2015duality, MiHaMa15UDER, DeFeTa17TERD}.  
Here, the idea is to use the non-increasing relative Boltzmann entropy 
(also called free energy) as an a priori bound 
to control solutions and study their long-time behavior. Moreover, 
entropy methods are a useful tool for deriving convergence results for the 
spatial discretization of linear reaction-diffusion systems, 
see  \cite{heinze2024gradient}; and it is the purpose of this work to generalize these results to nonlinear reaction kinetics. 

We stress that our works main focus is not on the regularity of the spatial
discretization nor of the model (like data or coefficients), but instead the
variational nature of our approach, which is based on the theory of generalized
gradient flows (see e.g.\ \cite{Mie24GF} for an introduction).

To be more precise, we are interested in reaction-diffusion systems, where not
only the free energy is decaying, but where the system is a \textit{gradient flow}
of the free energy. Starting with the pioneering work of Otto
\cite{JKO98FPE,Ott01PME}, it is known that many diffusion-type problems can be
understood as gradient flows driven by a suitable free energy. Later, this was
extended to reaction-diffusion systems satisfying \textit{detailed balance} in
\cite{Mielke11GSRD} for quadratic dissipations and in \cite{MPPR17NETCR} for
cosh-type dissipations, which we also use here.  The fundamental assumption
here, is that the system satisfies the detailed balance (or reversibility)
condition. This means that there exist reference concentrations
$\omega = (\omega_i)_{i\in I}$ such that $\omega_i \coloneqq \exp(-V_i)$ and such that
for all $r\in R$ it holds
\begin{align*}
	k^{\text{fw}}_r\prod_{\bar\imath\in I}\omega_{\bar\imath}^{\alpha^r_{\bar\imath}} 
	&= k^{\text{bw}}_r \prod_{\bar\imath\in I}\omega_{\bar\imath}^{\beta^r_{\bar\imath}}
	\eqqcolon \kappa_r\prod_{\bar\imath\in I}\omega_{\bar\imath}^{\frac{\alpha^r_{\bar\imath}+\beta^r_{\bar\imath}}{2}}.
\end{align*}
We observe that the detailed balance assumption ensures that the reaction-diffusion system \eqref{eq:PDE1Intro} can now be written in the symmetric form
\begin{align}
  \label{eq:RDSysCont}
    \partial_t \rho_i = \delta_i\div\bra[\Big]{\rho_i \nabla\log\bra[\Big]{\frac{\rho_i}{\omega_i}}} 
    + \sum_{r\in R}
    \kappa_r\omega^{\frac{\alpha^r+\beta^r}{2}}
    \bra[\bigg]{
    \bra[\Big]{\frac{\rho}{\omega}}^{\alpha^r}
    -
    \bra[\Big]{\frac{\rho}{\omega}}^{\beta^r}}
    (\beta^r_i-\alpha^r_i),
\end{align}
where we introduced the notation
\[
\rho^{\alpha^r}\coloneqq\prod_{i\in I}\rho_i^{\alpha^r_i},
\]
which will be used throughout. We further note that \eqref{eq:RDSysCont} contains the
\emph{tilt-invariant} form  of the reactions derived in \cite{MieSte20CGED}, where the dual dissipation potential, defined below, will not depend on $\omega$. 

What is more important to us, the system \eqref{eq:RDSysCont} has a gradient
structure and can now be investigated with variational methods.  More
precisely, we will introduce continuous and discrete gradient systems in
continuity equation format (cf.\ \cite{Peletier_Schlichting_2023}), link them to
\eqref{eq:RDSysCont} and appropriate jump processes, respectively, and obtain a
convergence result for these gradient systems in the spirit of 
$\Gamma$-convergence for gradient flows, see \cite{SandierSerfaty}, more
precisely EDP-convergence in the sense of
\cite{MieSte20CGED,Step21EDPLRDS,MiMoPe21EFED,Mie24GF}. Upon rigorously
linking these gradient systems to their corresponding equations, the finite
approximation of solutions will then be a direct consequence.

Next, we discuss our finite approximation of the system
\eqref{eq:RDSysCont}. For simplicity, we discretize the torus using uniform
grids $\bbZ^d_N$, $N\in\N$, noting that our model can be generalized to other
domains and discretizations by following ideas of, e.g., \cite{HT23}.
Furthermore, we assume for simplicity that the diffusion coefficients
$\delta_i$ as well as the reaction coefficients $\kappa_r$ are spatially
independent, although our analysis would not be harmed when considering
sufficiently smooth coefficients that are uniformly bounded above and away from
zero.

For fixed $N\in\N$, the discretized evolution equation is a coupled ODE of the concentration $c_{i,k}$ of each species $i\in I$ in each discrete position $k\in\bbZ^d_N$. Denoting by $\Eu \coloneqq \set[\big]{e=(e_1,\ldots,e_d)^\top\in \set{0,1}^d, \sum_{l=1}^d e_l =1}$ the set of $d$-dimensional unit vectors, the evolution is given by 
\begin{equation}\label{eq:RDSysDisc}
\begin{aligned}
    \dot c_{i,k} &= \sum_{e\in\Eu}
    \pra[\Big]{N^2\rmd_{i,k,e}\bra[\Big]{\frac{c_{i,k+e}}{w^N_{i,k+e}} 
    -\frac{c_{i,k}} {w^N_{i,k}}} + N^2 \rmd_{i,k,-e} 
    \bra[\Big]{\frac{c_{i,k-e}}{w^N_{i,k-e}}-\frac{c_{i,k}}{w^N_{i,k}}}}
\\
    &+ \sum_{r\in R}
    \kappa_r\omega^{\frac{\alpha^r+\beta^r}{2}}
     \bra[\bigg]{ \bra[\Big]{\frac{c_k}{w^N_k}}^{\alpha^r}
    - \bra[\Big]{\frac{c_k}{w^N_k}}^{\beta^r}} (\beta^r_i-\alpha^r_i),
\end{aligned}
\end{equation}
for the discrete reference concentrations $w^N_{i,k} \coloneqq \int_{Q^N_k}
\omega_i\rmd x$, where $Q^N_k \coloneqq \set{x\in\bbT^d: x_l 
\in [k_l/N,(k_l{+}1)/N), l=1,\ldots,d}$ are $d$-dimensional cubes of 
side length $1/N$. This system is related to the reaction-diffusion
master equation (RDME) treated in \cite{MoScWi23RHLR}, where also the diffusion
is replaced by jumps between nearest neighbors on the lattice. The 
intensity of the jumps is characterized  by rates $\rmd_{i,k,e} \coloneqq 
\delta_i\sqrt{w_{i,k+e} w_{i,k}}$, which are scaled by $N^2$. In our 
case, the reactions are modeled pointwise nonlinearities analogously to the 
space-continuous system, whereas in the RDME the reactions are modeled 
as linear jump processes on the number of particles. Our 
systems are complemented with suitable initial data satisfying a suitable 
well-preparedness condition specified later.

This work contains three main analytical results, which we summarize here. We 
refer to Section~\ref{sec:MainResults} for more details. 
The first main result, Theorem~\ref{thm:DiscEDB.ODE}, is the rigorous link of each 
prelimit system \eqref{eq:RDSysDisc} to a corresponding gradient structure via a 
so-called \emph{energy-dissipation principle} (EDP). Here, the main step is proving a 
chain rule, which is obtained by exploiting the discrete nature of the underlying 
base space.

The second main result, Theorem~\ref{thm:CRContinuous}, is the energy
dissipation principle for the limit system \eqref{eq:RDSysCont}. Since this
model is defined over a continuous base space, multiple regularity issues have
to be overcome in order to control in particular the nonlinear reaction terms,
which generate non-convexities in the variational formulation. 
Introducing the length 
$|\gamma|_1= \sum_{i\in I} \gamma_i $ of stoichiometric vectors 
$\gamma \in [0,\infty)^I$, our main assumptions are the following:
\begin{equation}
    \label{eq:Intro.Ass}
    \forall\, r\in R: \quad |\alpha^r|_1, |\beta^r|_1\leq \pcrit\coloneqq1{+}2/d 
    \ \text{ and } \ \frac12|\alpha^r{+}\beta^r|_1 < \pcrit.
\end{equation}
Note that the same critical growth exponent $\pcrit$ appears already 
in \cite[Thm.\,2.3]{Morg89GESP} (one has to choose $a=1$ there due to our 
$\rmL^1$ bound obtained from the relative enntropy) for showing global 
existence of smooth solutions. 

The growth power $\pcrit$ can be achieved not only for solutions, 
but for all curves satisfying natural a priori bounds on the energy and 
dissipation by exploiting the regularity the diffusion provides. 
If the conditions \eqref{eq:Intro.Ass} are not met, our analysis can still 
be carried out if the system admits natural $\rmL^\infty$-bounds, 
see Remark \ref{rem:BoundBox}. 
Such bounds are known to be satisfied by solutions to several 
classes of reaction-diffusion systems, namely if there exists a so-called 
bounding box, see \cite{Smol94SWRD} and our Remark \ref{rem:BoundBox}. 

The final main result, Theorem~\ref{thm:Convergence}, is the convergence of gradient
systems. For this, we require the slightly weaker assumption than \eqref{eq:Intro.Ass} that $\frac12|\alpha^r{+}\beta^r|_1 \leq \pcrit$ for all $r\in R$. The convergence proof consists of two parts, a compactness result and a lower limit. To establish the compactness, we construct a suitable family of embeddings into a unified space that crucially keep the gradient structure in tact. 

With these three main results, the convergence of solutions of \eqref{eq:RDSysDisc} 
to solutions of \eqref{eq:RDSysCont} follows in Corollary~\ref{cor:conv_sol}.

A major difficulty in the analysis of the space-continuous reaction-diffusion 
system comes from the reaction-induced nonlinearities and the resulting 
non-convexity of the dissipation. Here, we can use the some of the 
surprising properties of the cosh gradient structure based on the function 
$\C^*(\zeta)= 4 \cosh(\zeta/2)-4$, that is relevant for linear and nonlinear
reactions, see \cite{MPPR17NETCR, LMPR17MOGG, Peletier_Schlichting_2023, MiPeSt21EDPC, PRST22}. These surprising properties are encoded in nontrivial estimates for the Legendre dual $\C$ and its perspective function $\CC:(s,w) \mapsto w\C(s/w)$, see \eqref{eq:CC.prop}.  In particular, we can exploit the 
 \emph{magical estimate}
\begin{equation} 
  \label{eq:I.MagEst}
\forall\, q>1\ \forall\, s\in \R\ \forall\, w>0: \quad \C(s) \leq \frac{q}{q{-}1}
\, \CC\big(s \big|w\big) + \frac{4\,w^q}{q{-}1} 
\end{equation}
(see \ref{eq:CC.prop.d} and Proposition \ref{prop:MagEst} for the proof). Note 
that such an estimate does not hold for dissipation potentials $\psi$ of power-law type: for $\psi(s)= |s|^p$ with $p>1$ we obtain $\Psi(s|w)= |s|^p/w^{p-1}$ such that  the right-hand side in \eqref{eq:I.MagEst} only bounds the weaker power law $|s|^r$ with $r=pq/(p{+}q{-}1)\lneqq p$.  

As usual, the chain rule is proved by a smoothing argument. In contrast to 
linear reaction systems like for Markov processes and Fokker-Planck type 
equations (cf. e.g., \cite{Step21EDPLRDS, Peletier_Schlichting_2023, PRST22}), 
it is not possible to rely solely on convexity arguments. Instead, our 
convergence proof combines the magical estimate with the Hardy-Littlewood 
maximal function from harmonic analysis and the easy but non-trivial estimate 
$\partial_w \CC(s|w)| \leq 2|s/w|$ (see \eqref{eq:CC.prop.b}) to 
obtain an integrable majorant on the reactive flux. We believe that this 
flexible approach could also be used for handling non-convexities in 
other cases where the cosh gradient structure is relevant. 

For the discrete approximation the challenge lies in deriving weak-$\rmL^1$ 
compactness for the reactive fluxes. Here, we require $\pcrit$-uniform 
integrability of the embedded concentrations. To achieve this, we exploit 
the flexibility of the embedding method by introducing a second family of 
more regular embeddings. For this family higher integrability can be obtained, 
while we rely on the first family of embeddings to obtain the liminf inequality.

The paper is structured as follows: In Section~\ref{sec:abstract_strategy}, 
we present the abstract strategy of the paper. Section~\ref{s:GSRDS} introduces 
first the gradient structures for the discrete and the continuous reaction-diffusion 
systems. Then, we connect both models with an embedding such that we can state 
the main results of the paper in Section~\ref{sec:MainResults}. There, we also 
list and discuss in detail 
the assumptions on our reaction coefficients. The proof of our convergence result 
is carried out in Section~\ref{s:ProofConvergence}. Here, we first derive the
compactness, before showing the claimed liminf-estimate. Finally, 
Section~\ref{s:CRs} contains the detailed proofs of the chain rules inequality first 
for discrete and then for the continuous reaction-diffusion systems.

\section{Abstract strategy}\label{sec:abstract_strategy}

To improve clarity, before challenging the reader with the notation of our concrete problem, we first present on a formal and abstract level the strategy of our work.

\subsection{Gradient systems with explicit abstract gradient mappings}
\label{subsec:GradSyst}

We begin by introducing a quintuple $(X,Y,\grad,\calE,\calR^\ast)$, called a \emph{gradient system}. This notion is a small modification of gradient systems in continuity equation format introduced in \cite{Peletier_Schlichting_2023}.

The elements of the gradient system are two pairs of \emph{base spaces} 
$X = (X^\dom,X^\tar)$, $Y = (Y^\dom,Y^\tar)$, where $X^\dom,Y^\dom$ are Borel 
subsets of a Euclidean space, and $X^\tar,Y^\tar$ are Euclidean  spaces. 
Test functions over these spaces are linked by  an abstract linear 
\emph{gradient map} $\grad : \rmC^\infty_c(X^\dom,X^\tar) \to \rmC^\infty_c(Y^\dom,Y^\tar)$,
with dual $\grad^\ast : (\rmC^\infty_c(Y^\dom,Y^\tar))^\ast \to \rmC^\infty_c((X^\dom,X^\tar))^\ast$,  which is sometimes called 
\emph{process-space to tangent map}. Here,  for $Z\in\{X,Y\}$ the 
dual pairing is defined as usual by
\[
\forall\, \phi\in \rmC^\infty_c(Z^\dom,Z^\tar)\quad\forall\, \mu\in (\rmC^\infty_c(Z^\dom,Z^\tar))^\ast: \langle\phi,\mu\rangle_Z \coloneqq\langle\phi,\mu\rangle \coloneqq \int_{Z^\dom} \phi \cdot\dd\mu, 
\]
with $\cdot$ denoting the canonical inner product in the Euclidean space $Z^\tar$.

The fourth element of the quintuple is a lower semicontinuous (lsc) 
\emph{energy functional} $\calE:\calM_+(X^\dom,X^\tar)\to\R\cup\{\infty\}$, 
where $\calM_+(X^\dom,X^\tar)$ denotes the set of $X^\tar$-valued, 
component-wise non-negative  Radon measures. The final element is a 
\emph{dual dissipation potential} $\calR^\ast:\calM_+(X^\dom,X^\tar)\times 
C(Y^\dom,Y^\tar)\to [0,\infty]$, which, by definition, is lsc and non-negative 
 with $\calR^*(\rho,0)=0$,  and satisfies for all 
$\rho\in\calM_+(X^\dom,X^\tar)$ that $\xi\mapsto \calR^*(\rho,\xi)$ is convex.

Fixing an arbitrary time horizon $T>0$ and an initial datum 
$ \rho_0\in\calM_+(X^\dom,X^\tar)$, the gradient system $(X,Y,\grad,\calE,\calR)$
induces on $[0,T]$ an evolution equation, the \emph{gradient flow} equation 
\begin{subequations}
 \label{eq:abstract_GF}
\begin{align}
    \label{eq:abstract_GF_equation}
    &\partial_t \rho = \grad^\ast \partial_\xi\calR^\ast(\rho,-\grad \rmD\calE(\rho)), \\
    \label{eq:abstract_initial}
    &\rho(0) = \rho_0,
\end{align}    
\end{subequations}
where $\rmD\calE$ denotes the variational derivative of $\calE$ and $\partial_\xi\calR^\ast$ denotes the convex subdifferential of $\calR^\ast(\rho,\cdot)$. 

By specifying $\calR^\ast$ and $\calE$, the gradient system contains more information than the gradient-flow equation. Indeed, it is well-known that the same gradient-flow equation can be derived from different gradient systems, each corresponding to a different physical setting, see \cite{Mie24GF}. 

Before further discussing the link between gradient flow and gradient system, we comment on the relation of the presented notion of gradient flow with other notions.

\begin{remark}[Link to other notions of gradient flow]
\label{rem:link_to_other_GF-notions}
The presented notion is heavily influenced by the gradient systems in 
continuity equation format introduced in \cite{Peletier_Schlichting_2023}, 
the only difference being the split of $X$ and $Y$ in a domain and a target 
space, which allows us to directly incorporate well-known objects like the 
classical gradient $\nabla: \rmC^1(\bbT^d)\to \rmC(\bbT^d;\R^d)$ into our framework.
Furthermore, we observe that by setting $\grad=\operatorname{id}$, $Y = X$, 
$\calX = \calM_+(X)$ and $\scrR^\ast(\rho,\varphi) \coloneqq 
\calR^\ast(\rho,\grad\varphi)$ for all $\rho,\varphi$, we recover the 
well-established notion of a gradient system $(\calX,\calE,\scrR^\ast)$ 
as introduced in \cite{Mie24GF}. Choosing $\calR^\ast$ as a quadratic functional, we can 
also recover metric gradient systems and metric gradient flows in the 
spirit of \cite{AGS}.
\end{remark}

To establish the link between the gradient system $(X,Y,\grad,\calE,\calR^\ast)$ and 
the gradient flow equation \eqref{eq:abstract_GF}, we split the latter into two parts:
First, we introduce the \emph{continuity equation}, which links a weak-$^\ast$ 
measurable curve $\rho:[0,T]\to \calM_+(X^\dom,X^\tar)$ 
with a  weak-$^\ast$ measurable \emph{curve of fluxes} $j:[0,T] \to \calM(Y^\dom,Y^\tar)$ 
by the relation (understood in the sense of distributions on $[0,T]\times X^\dom$)
\begin{align}\label{eq:abstract_CE}
    \partial_t \rho = \grad^\ast j.
\end{align}
The set of curves  $(\rho,j)$  satisfying \eqref{eq:abstract_CE} is denoted 
by $\CE$.

Secondly, given a pair $(\rho,j)\in\CE$, we recover \eqref{eq:abstract_GF} if the 
initial condition \eqref{eq:abstract_initial} holds and $j$ satisfies (in the 
sense of measures) the \emph{constitutive relation} 
\begin{align}\label{eq:abstract_KR}
    j = \partial_\xi\calR^\ast(\rho,-\grad \rmD\calE(\rho)).
\end{align}
One important  link between a gradient system and its induced gradient 
flow is called the \emph{energy-dissipation principle}. It is formally 
established as follows. We introduce the \emph{(primal) dissipation potential} 
$\calR:\calM_+(X^\dom,X^\tar) \times \calM(Y^\dom,Y^\tar)\to [0,\infty]$ as the 
convex dual of $\calR^\ast$ with respect to the second variable. Together 
$\CE$, $\calE$, $\calR$ and $\calR^\ast$  give rise to the \emph{dissipation functional}
\begin{align*}
    \calD(\rho,j)\coloneqq 
    \begin{cases}
        \int_0^T \calR(\rho,j) {+} \calR^\ast(\rho,-\grad\rmD\calE(\rho))\dd t & \quad\text{for } (\rho,j)\in\CE\\
        +\infty & \quad\text{for } (\rho,j)\notin\CE,
    \end{cases}
\end{align*}
and the \emph{energy-dissipation functional}
\begin{align*}
    \calL(\rho,j) \coloneqq \calE(\rho(T))-\calE(\rho(0))+\calD(\rho,j).
\end{align*}
 We say that  the gradient system satisfies the \emph{energy-dissipation
principle} if $\rho$ solving \eqref{eq:abstract_GF}  (in a suitable weak sense) 
is equivalent to $(\rho,j)$ solving $\CE$
and $\calL(\rho,j) = 0$. 

A crucial role in making this principle rigorous is played by the 
\emph{chain rule inequality} for the gradient system,  which means that 
$\calL(\rho,j)\ge 0$ holds for all $(\rho,j)\in\CE$. This name is  
motivated by the following formal calculation:
\begin{align*}
    \frac{\rmd}{\rmd t}\calE(\rho) &= \langle\rmD\calE(\rho),\partial_t\rho\rangle_X = -\langle -\grad\rmD\calE(\rho),j\rangle_Y \ge -\calR^\ast(\rho,-\grad\rmD\calE(\rho))-\calR(\rho,j),
\end{align*}
where the first equality is the classical chain rule, the second equality holds for $(\rho,j)\in\CE$, and the inequality follows from the duality of $\calR^\ast$ and $\calR$ (Young-Fenchel estimate). 
Integrating in time from 0 to $T$,  we obtain $\calL(\rho,j)\geq 0$. 

However, if $\calL(\rho,j)\le 0$ is imposed additionally, then we must have equality in the Young-Fenchel inequality for a.a.\ $t\in [0,T]$: 
\[
\langle -\grad\rmD\calE(\rho),j\rangle_Y  = \calR^\ast(\rho,-\grad\rmD\calE(\rho)) +\calR(\rho,j). 
\]
By the Fenchel equivalence, this implies that \eqref{eq:abstract_KR} holds a.e.\ on [0,T]. Plugging this into CE \eqref{eq:abstract_CE} shows that \eqref{eq:abstract_GF} holds. The opposite direction from \eqref{eq:abstract_GF} to $\calL(\rho,j)=0$ with $j$ from \eqref{eq:abstract_GF} is obvious.  

Of course, we will make these arguments rigorous for the reaction-diffusion systems under consideration.

\subsection{Convergence of gradient systems}
\label{ss:GeneralConvergenceStrategy}

Having introduced abstract gradient systems and briefly discussed the 
energy-dissipation principle, we now want to discuss, on an abstract level, 
our strategy for obtaining the convergence of gradient flows. 

To this end, consider a family of approximating gradient systems $(X_N,Y_N, 
\grad_N,\calE_N,\calR^\ast_N)_{N\in\bbN}$ inducing $\CE_N$, $\calD_N$, and 
$\calL_N$ as before. As a first step, one proves that each 
$(X_N,Y_N,\grad_N,\calE_N,\calR^\ast_N)$ satisfies a chain rule inequality 
and an energy-dissipation principle. Next, one shows that for each $N\in\bbN$ 
and each initial datum $\rho_N^0$ with $\calE_N(\rho_N^0)<\infty$ there exists 
a solution pair $(\tilde\rho_N,\tilde\jmath_N)\in\CE_N$ with 
$\tilde\rho_N(0)=\rho_N^0$ and $\calL_N(\tilde\rho_N,\tilde\jmath_N)=0$.

Our aim is to connect the approximating gradient systems with a limit gradient 
system $(X,Y,\grad,\calE,\calR^\ast)$ inducing $\CE$, $\calD$, and $\calL$. 
For this, one has to show that $(X,Y,\grad,\calE,\calR^\ast)$ also satisfies 
chain rule inequality and energy-dissipation principle.

To establish the link, a candidate curve that might be a solution for the 
limit system needs to be obtained by applying a compactness argument to the family of prelimit solutions $(\tilde\rho_N,\tilde\jmath_N)_{N\in\bbN}$. However, the different gradient systems are defined 
over different base spaces, hence a unified space is needed in which compactness 
can be realized. To this end, one constructs an \emph{embedding operator} $\iota_N:\calM_+(X_N^\dom,X_N^\tar)\to \calM_+(X^\dom,X^\tar)$ and a \emph{discretization operator} $\iota_N^\ast:\rmC^\infty_c(X^\dom,X^\tar)\to \rmC^\infty_c(X_N^\dom,X_N^\tar)$
such that for all $\varphi\in \rmC^\infty_c(X^\dom,X^\tar)$ it holds
\begin{align*}
    \langle \iota_N\rho_N,\varphi\rangle_X = \langle \rho_N,\iota_N^\ast\varphi\rangle_{X_N}.
\end{align*}
For the fluxes one constructs $\iota_{N,\grad}:\calM_+(Y_N^\dom,Y_N^\tar)\to \calM_+(Y^\dom,Y^\tar)$ such that for all $\varphi\in \rmC^\infty_c(X^\dom,X^\tar)$ it holds
\begin{align*}
    \langle \iota_{N,\grad}j_N,\grad\varphi\rangle_Y = \langle j_N,\grad_N\iota_N^\ast\varphi\rangle_{Y_N}.
\end{align*}
Since the continuity equation is understood in the sense of distributions, this implies that $(\rho_N,j_N)\in\CE_N$ if and only if $(\iota_N\rho_N,\iota_{N,\grad}j_N)\in\CE$. 

Now it is possible to prove that for each family $(\rho_N,j_N)_{N\in\bbN}$ with $(\rho_N,j_N)\in\CE_N$ and $\sup_{N\in\bbN}\sup_{t\in[0,T]}\calE_N(\rho_N(t))<\infty$ as well as $\sup_{N\in\bbN}\calD_N(\rho_N,j_N)<\infty$, there exists $(\rho,j)\in\CE$ with $\calE(\rho(0))<\infty$ and $\calD(\rho,j)<\infty$ such that (along a subsequence) $(\iota_N\rho_N,\iota_{N,\grad}j_N) \rightharpoonup^\ast (\rho,j)$. 
In particular, such a limit $(\tilde\rho,\tilde\jmath)$ exists for the family $(\tilde\rho_N,\tilde\jmath_N)_{N\in\bbN}$.

Next, one shows that for each family $(\rho_N,j_N)_{N\in\bbN}$ satisfying the a priori 
bounds as before and each limit $(\rho,j)$ of the embedded family, we have the liminf 
estimates 
\begin{align}\label{eq:abstract_lower_limit}
    \liminf_{N\to\infty}\calD_N(\rho_N,j_N) \ge \calD(\rho,j) 
    \qquad \text{and}\qquad \liminf_{N\to\infty}\calE_N(\rho_N(t)) 
    \ge \calE(\rho(t))\quad \forall\, t\in[0,T].
\end{align}
Notice that this inequality relates the dissipation functionals of the non-embedded
curves with the limiting dissipation functional of the limiting curve.
In particular, \eqref{eq:abstract_lower_limit} holds for the previously obtained 
family of solutions $(\tilde\rho_N,\tilde\jmath_N)_{N\in\bbN}$ 
and each of its limits $(\tilde\rho,\tilde\jmath)$.

To conclude that the limits are indeed solution, we now assume the well-preparedness of initial data
\begin{align*}
    \iota_N\rho_N^0 \rightharpoonup^\ast \rho^0\qquad \text{and}\qquad\lim_{N\to\infty}\calE_N(\rho_N^0) = \calE(\rho^0) <\infty.
\end{align*}
The energy identity combined with \eqref{eq:abstract_lower_limit} and the limit 
chain rule inequality yield
\begin{align*}
    0 
    = \liminf_{N\to\infty}\calL_N(\tilde\rho_N,\tilde\jmath_N) 
    \ge \calL(\tilde\rho,\tilde\jmath) 
    \ge 0,
\end{align*}
from which the energy-dissipation principle of the limit gradient system implies that $\tilde\rho$ is solution starting at $\tilde\rho(0) = \rho^0$ and that $\tilde\jmath$ is given by the kinetic relation \eqref{eq:abstract_KR}.

\section{Gradient system for the reaction-diffusion system}\label{s:GSRDS}
\label{sec:Grad4RDS}

We want to describe the evolution of $i_*$ chemical species $ \rmX_i$ with $i\in \{1,\dots, {i_*}\} \eqqcolon I $ undergoing diffusion in a subdomain $\Omega\subset\R^d$ and interacting according to $r_*$ chemical reactions: 
\[
\sum_{i\in I} \alpha^r_i \rmX_i \rightleftharpoons \sum_{i\in I} \beta^r_i \rmX_i, \quad r\in\{1,\dots,r_*\}\eqqcolon R.
\]
Throughout the paper we assume that  the physical domain is given by  
$\Omega=\bbT^d$ (the $d$-dimensional torus), and that we have finitely many 
species and reactions,  i.e., $i_*,r_*\in \N $.  In the following, we 
 will also use the effective stoichiometric vectors  
$\gamma^r\eqqcolon\alpha^r {-} \beta^r \in \R^I$.  
Moreover, we fix reaction coefficients $\kappa_r>0$ (describing the reaction speed) 
for each reaction and diffusion coefficients $\delta_i>0$ for each species.

\subsection{Discrete reaction-diffusion gradient systems}

We present the gradient structure for the spatially discrete reaction-diffusion 
system with fixed $N\in\bbN$. Denoting $\bbZ^d_N = (\bbZ/N\bbZ)^d$ the set of 
discrete positions (with periodic boundary conditions), 
and $\Eu \coloneqq \set[\big]{e=(e_1,\ldots,e_d)^\top\in \set{0,1}^d, \sum_{l=1}^d e_l =1}$ the set of discrete directions, we introduce the spaces 
\begin{align*}
    X_N&&&\coloneqq (X_N^\dom,X_N^\tar) &&\coloneqq (I\times\bbZ^d_N,\R),\\
    Y_{N,\diff}&&&\coloneqq (Y^\dom_{N,\diff},X^\tar_{N,\diff})&&\coloneqq (I\times\bbZ^d_N\times\Eu,\R),\\
    Y_{N,\react}&&&\coloneqq (Y^\dom_{N,\react},X^\tar_{N,\react})&&\coloneqq (R\times\bbZ^d_N,\R),\\
    Y_N&&&\coloneqq (Y^\dom_N,Y^\tar_N)&&\coloneqq (Y^\dom_{N,\diff}\times Y^\dom_{N,\react},Y^\tar_{N,\diff}\times Y^\tar_{N,\react}).
\end{align*}
We introduce the short notation $\rmC(X_N) \coloneqq \rmC(X_N^\dom;X_N^\tar)$ and analogously for all other spaces of functions/measures over these spaces. Furthermore, given a time interval $[0,T]$ we write $\rmC([0,T]\times X_N)\coloneqq \rmC([0,T]\times X_N^\dom;X_N^\tar)$ and analogously for all other spaces of functions or measures over these spaces.

Abusing notation, we denote by $\langle\cdot,\cdot\rangle_N$ the dual products for vectors as well as components, e.g., for $(\zeta,\xi)\in \rmC(Y_N)$ and $(u,v)\in \calM(Y_N)$ we write
\begin{align*}
    \langle(\xi,\zeta),(u,v)\rangle_N &= \langle \xi,u\rangle_N + \langle \zeta,v \rangle_N 
    = \sum_{i\in I} \langle \xi_i,u_i \rangle_N + \sum_{r\in R}\langle \zeta_r, v_r \rangle_N \\
    &= \frac{1}{N^d}\sum_{k\in\bbZ^d_N}\bigg(\sum_{i\in I}\sum_{e\in\Eu}  \xi_{i,k,e} u_{i,k,e} + \sum_{r\in R} \zeta_{r,k} v_{r,k}\bigg)
\end{align*}
and similarly for other functions/measures defined over $X_N$ or $Y_N$.

Again abusing notation, but highlighting that no spatial component is involved, we introduce for the inner products on $\R^I$ and $\R^R$ the notation 
\[
    \gamma\bullet\xi =\sum_{i\in I}\gamma_i\xi_i, \quad f\bullet \psi = \sum_{r\in R} f_r\psi_r. 
\]
Given stoichiometric vectors $\alpha^r,\beta^r\in [0,\infty)^I$ and  
$\gamma^r =\alpha^r{-}\beta^r$ for $r\in R$, we define the \emph{discrete gradient} 
$\nablaDDiff$, the \emph{stoichiometric matrix} (or \emph{Wegscheider matrix}) 
$\Gamma$, and the  \emph{abstract linear gradient map}  $\nablaDisc$ by
\begin{align*}
    \nablaDisc&:\rmC^\infty_c(X_N) \to \rmC^\infty_c(Y_N),\quad 
    &&\nablaDisc\varphi \coloneqq (\nablaDDiff\varphi, \nablaDReact\varphi), 
    \quad \text{with}
\\[0.3em]
    \nablaDDiff&:\rmC^\infty_c(X_N) \to \rmC^\infty_c(Y_{N,\diff}),\quad 
    &&\nablaDDiff\varphi_{i,k,e} \coloneqq \varphi_{i,k+e}-\varphi_{i,k},
    \quad \text{and} 
\\
    \nablaDReact&:\rmC^\infty_c(X_N) \to \rmC^\infty_c(Y_{N,\react}),\quad 
    &&\nablaDReact\varphi_{r,k} \coloneqq \sum\nolimits_{i\in I}\gamma^r_i\varphi_{i,k} = \gamma^r\bullet\varphi_k.
\end{align*}
Their dual operators are given by
\begin{align*}
    \nablaDisc^\ast&:(\rmC^\infty_c(Y_N))^* \to (\rmC^\infty_c(X_N))^*,\quad 
    &&\nablaDisc^\ast(\xi,\zeta) \coloneqq  -\divDDiff\xi + \nablaDReact^\ast\zeta,
      \quad \text{with }    
\\[0.3em]
    -\divDDiff&:(\rmC^\infty_c(Y_{N,\diff}))^* \to (\rmC^\infty_c(X_N))^*,\quad 
    &&-\divDDiff\xi_{i,k} \coloneqq \sum\nolimits_{e\in\Eu}(\xi_{i,k-e,e}-\xi_{i,k,e}),
    \quad \text{and}    
\\
    \nablaDReact^\ast&:(\rmC^\infty_c(Y_{N,\react}))^* \to (\rmC^\infty_c(X_N))^*,\quad 
    &&\nablaDReact^\ast\zeta_{i,k} \coloneqq \sum\nolimits_{r\in R}\gamma^r_i\zeta_{r,k} = \gamma_i\bullet\zeta_k.
\end{align*}
Elements of the state space $\calM_+(X_N)$ are denoted by $c=(c_{i,k})_{i\in I, k\in\bbZ^d_N}$ and will be called \textit{chemical concentrations}.
We consider the \emph{relative entropy} with respect to a  positive 
\emph{reference concentration} $\discref\in\calM_+(X_N)$
\begin{equation}\label{eq:discrete_energy}
    E_N(c) \coloneqq \frac{1}{N^d}\sum_{i\in I}\sum_{k\in\bbZ_N^d}\LB \bra[\bigg]{\frac{c_{i,k}}{\discref_{i,k}}}\discref_{i,k},
\end{equation}
where the Boltzmann function is defined by $\LB (r)=r\log r-r+1$.

The \emph{discrete dual dissipation potential} $R^*_N: \calM_+(X_N)\times \rmC(Y_N) 
\to[0, \infty)$ consists of two parts, which correspond to the discrete diffusion 
(i.e. jumps) and reactions, respectively. It is defined for $c \in\calM_+(X_N)$, 
$\xi \in \rmC(Y_{N,\diff})$, and $\zeta \in \rmC(Y_{N,\react})$ by
\begin{align*}
    R^*_N (c,(\xi,\zeta)) &\coloneqq R^*_{N, \diff} (c,\xi) + R^*_{N, \react} (c,\zeta)
   \quad \text{with}
\\[0.4em]
    R^*_{N, \diff} (c,\xi) &\coloneqq \frac{1}{N^d} \sum_{i\in I}\sum_{k\in\bbZ_N^d} \sum_{e\in \Eu} N^2 \delta_i \bra[\big]{c_{i,k}c_{i,k+e}}^{1/2}    \C^*(\xi_{i,k,e}),
\\
    R^*_{N, \react} (c,\zeta) &\coloneqq  \frac{1}{N^d} \sum_{r\in R}\sum_{k\in\bbZ_N^d}  \kappa_r \bra[\big]{c_k^{\alpha^r}c_k^{\beta^r}}^{1/2} \C^*(\zeta_{r,k}),
\end{align*}
where $\C^*(r) \coloneqq 4\cosh(r/2)-4$.
In the sequel we will write $R^*_N (c,\xi,\zeta)$ instead of $R^*_N (c,(\xi,\zeta))$, and analogously for similar objects depending on a configuration, a diffusive component, and a reactive component.
Note that the diffusive part of the dissipation contains a factor $N^2$ that will provide  the continuous diffusion in the limit $N\to \infty$. Note that $E_N$ depends on $w \in \calM_+(X_N)$, whereas $R^*_N$ is independent of $w$, which is called tilt-invariance in \cite{MieSte20CGED}. 

The previously defined objects form the discrete gradient system 
$(X_N,Y_N,\nablaDisc,E_N,R^\ast_N)$. The corresponding gradient flow equation is 
the discrete reaction-diffusion system \eqref{eq:RDSysDisc}.

Throughout we will make use of  various  properties of the function $\C^*(r)$
characterizing $R^\ast_N$ and its Legendre transform $\C$. We gather these 
properties in the following lemma:

\begin{lemma}\label{lem:PropertiesC}
    The convex function $\C^*:\R\to [0,\infty)$ defined by
    \begin{align*}
        \C^*(\sigma)=4 \cosh(\sigma/2)- 4
    \end{align*}
    and its convex conjugate 
    \begin{align*}
        \C(s) \coloneqq \sup_{\sigma \in \R} \{\sigma s - \C^*(\sigma)\} = 2s\mathop{\mathrm{Arsinh}} ( s/2) - 2\sqrt{s^2{+}4}+4
    \end{align*}
have the following properties:
\begin{subequations}
\begin{align} 
 \label{eq:CStarandLog}
&\begin{aligned} 
  \forall\,a,b>0: \quad &
         \sqrt{ab}\cdot\C^*(\log a- \log b) 
            = 2\abs[\big]{\sqrt{a} - \sqrt{b}}^2, \\
            &\sqrt{ab}\cdot(\C^*)'(\log a - \log b) = a - b;
    \end{aligned}
\\
 & \label{eq:Cprime.bound}
  \forall \, s \in \R : \quad   
    	\C(s) \leq s \C'(s) \leq 2 \C(s)  \ \text{ and } \ 
       \frac{|s|}2 \log\big(1{+}|s|\big) \leq \C(s) \leq |s|\log\big(1{+}|s|\big).
\end{align}
\end{subequations} 
\end{lemma}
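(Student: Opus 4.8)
The plan is to pin down the closed form of $\C$ first and then read off everything else from the two explicit expressions together with convex duality. Since $(\C^*)'(\sigma)=2\sinh(\sigma/2)$ is a strictly increasing bijection of $\R$ onto $\R$, the supremum defining $\C(s)$ is attained at the unique $\sigma=\sigma(s)$ solving $2\sinh(\sigma/2)=s$, i.e.\ $\sigma(s)=2\,\mathrm{Arsinh}(s/2)$. Substituting this into $s\sigma-\C^*(\sigma)$ and using $\cosh(\mathrm{Arsinh}(x))=\sqrt{1+x^2}$ gives $\C(s)=2s\,\mathrm{Arsinh}(s/2)-2\sqrt{s^2+4}+4$, as claimed. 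The same computation records two facts I will reuse: $\C'(s)=\sigma(s)=2\,\mathrm{Arsinh}(s/2)$ (the derivative of a Legendre transform is the inverse of $(\C^*)'$) and $\C(0)=0$; note also that $\C$ is even because $\mathrm{Arsinh}$ is odd.

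For the identities \eqref{eq:CStarandLog} I would simply set $\sigma=\log a-\log b$, so that $e^{\sigma/2}=\sqrt{a/b}$ and $e^{-\sigma/2}=\sqrt{b/a}$. Then $\cosh(\sigma/2)=\tfrac12(\sqrt{a/b}+\sqrt{b/a})$ and $\sinh(\sigma/2)=\tfrac12(\sqrt{a/b}-\sqrt{b/a})$; multiplying $\C^*(\sigma)=4\cosh(\sigma/2)-4$ and $(\C^*)'(\sigma)=2\sinh(\sigma/2)$ by $\sqrt{ab}$ and simplifying yields $\sqrt{ab}\,\C^*(\sigma)=2(a+b)-4\sqrt{ab}=2\abs{\sqrt a-\sqrt b}^2$ and $\sqrt{ab}\,(\C^*)'(\sigma)=a-b$. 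This step is pure algebra.

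For the sandwich in \eqref{eq:Cprime.bound}, the left inequality $\C(s)\le s\C'(s)$ is immediate from convexity and $\C(0)=0$: the tangent inequality at $s$ tested against $0$ reads $\C(0)\ge\C(s)+\C'(s)(0-s)$, i.e.\ $0\ge\C(s)-s\C'(s)$. For the right inequality I would use the Fenchel--Young equality $\C(s)+\C^*(\C'(s))=s\C'(s)$, which turns $2\C(s)-s\C'(s)\ge0$ into $\C(s)\ge\C^*(\C'(s))$. With $\C'(s)=2\,\mathrm{Arsinh}(s/2)$ one computes $\C^*(\C'(s))=2\sqrt{s^2+4}-4$, so the claim reduces to the scalar inequality $s\,\mathrm{Arsinh}(s/2)\ge2\bigl(\sqrt{s^2+4}-2\bigr)$. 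This is an even function vanishing at $s=0$; for $s\ge0$ I would differentiate and, writing $u=\mathrm{Arsinh}(s/2)$ so that $s/\sqrt{s^2+4}=\tanh u$, reduce it to $u\ge\tanh u$ for $u\ge0$.

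The logarithmic bounds are the delicate part, and I expect the upper one to be the main obstacle. I would reduce to $s\ge0$ by evenness and write $\mathrm{Arsinh}(s/2)=\log\bigl(\tfrac12(s+\sqrt{s^2+4})\bigr)$. The lower bound follows cleanly by combining the sandwich, which gives $\C(s)\ge\tfrac12 s\C'(s)=s\,\mathrm{Arsinh}(s/2)$, with the elementary estimate $\mathrm{Arsinh}(s/2)\ge\tfrac12\log(1+s)$, which is proved by a monotone-difference argument anchored at $s=0$ (comparing derivatives reduces it to $2(1+s)\ge\sqrt{s^2+4}$). The upper estimate is more subtle: the sandwich only produces $\C(s)\le s\C'(s)=2s\,\mathrm{Arsinh}(s/2)$, which must then be matched against $\abs{s}\log(1{+}\abs{s})$ through a sharp comparison of $\mathrm{Arsinh}(s/2)$ and $\log(1+s)$ over the whole half-line, so I would attack it directly from the explicit formula by analyzing the sign of $s\mapsto\C(s)-\abs{s}\log(1{+}\abs{s})$ via monotonicity together with its behaviour at $s=0$ and as $s\to\infty$. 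Controlling this comparison uniformly in $s$, rather than only to leading order, is where the real care is needed.
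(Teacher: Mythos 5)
Your computation of the closed form of $\C$, the identities \eqref{eq:CStarandLog}, the two-sided sandwich $\C(s)\le s\C'(s)\le 2\C(s)$ (convexity at $0$ plus the Fenchel--Young equality, reduced to $u\ge\tanh u$), and the lower logarithmic bound via $\C(s)\ge\tfrac12 s\C'(s)=s\,\mathrm{Arsinh}(s/2)\ge\tfrac{s}{2}\log(1+s)$ are all correct; these are exactly the ``elementary calculations'' that the paper delegates to \cite[Lem.\,3.4]{HT23}, so up to that point your write-up is more detailed than the paper's own proof.

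The gap is the upper logarithmic bound, and it is not merely a matter of care: the inequality $\C(s)\le|s|\log(1+|s|)$ as printed is \emph{false} for moderately large $|s|$, so the sign analysis you propose cannot close. Indeed, from the explicit formula, $\C(s)=2s\,\mathrm{Arsinh}(s/2)-2\sqrt{s^2+4}+4=2s\log s-2s+4+O(1/s)$ as $s\to+\infty$, whereas $s\log(1+s)=s\log s+1+O(1/s)$, so $\C(s)-s\log(1+s)\to+\infty$; numerically the inequality already fails at $s=5$, where $\C(5)\approx 9.70>8.96\approx 5\log 6$ (the crossover is near $s\approx 3.6$). The statement needs a factor $2$: the correct bound is $\C(s)\le 2|s|\log(1+|s|)$, and it drops out of two facts you have already established, namely $\C(s)\le s\C'(s)=2s\,\mathrm{Arsinh}(s/2)$ and $\mathrm{Arsinh}(s/2)\le\log(1+s)$ for $s\ge0$ (equivalent, after exponentiating, to $\sqrt{s^2+4}\le s+2$); the constant $2$ is sharp since $\C(s)/\big(|s|\log(1+|s|)\big)\to 2$. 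This repair is harmless for the rest of the paper: \eqref{eq:Cprime.bound} is used only through the sandwich (in the proof of Lemma \ref{lem:Props.CC}) and through the two-sided comparability of $\C(s)$ with $|s|\log(1+|s|)$ up to constants (to identify $\iint_\OmegaT\C(|j|)\dd x\dd t<\infty$ with $j\in\L\log\L$ in Section \ref{sec:ChainRuleCont}), and neither use is sensitive to the factor $2$.
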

\begin{proof}
These results are obtained by elementary calculations, see e.g.\ \cite[Lem.\,3.4]{HT23} for more details.
\end{proof}

 In addition to $\C$ we also need its so-called \emph{perspective function} 
$\CC:\times [0,\infty) \to [0,\infty]$, which is given by
\begin{equation}
    \label{eq:Def.CC.perspect}
\CC(s|w)\coloneqq\sup_{\zeta\in\R}\{s\zeta- w \C^*(\zeta) \} = 
\begin{cases} w\C(s/w) &\text{for } w>0 , 
       \\ \chi_0(s) &\text{for }w=0. \end{cases}
\end{equation}

In the sequel we will need the following properties of $\CC$. The last result 
is the magical estimate that will be crucially used in 
Proposition~\ref{prop:React.Commutator.Estim}. For a similar estimate for the 
relative Boltzmann entropy, we refer to \cite[Eqn.\,(2.7)]{FHKM22GEAE}. 

\begin{lemma}[Properties of the perspective function $\CC$] 
\label{lem:Props.CC}
\begin{subequations}
  \label{eq:CC.prop}
\begin{align}
\label{eq:CC.prop.a}
   &\text{The mapping } \R\times[0,\infty)\ni (s,w)\mapsto \CC(s|w) \ \text{ is strictly convex}.
   \\
\label{eq:CC.prop.b}
   & \left. \begin{aligned} 
    &\forall\, s\in \R: \quad w\mapsto \CC(s|w) \ \text{ is non-increasing with} 
       \\
   &\partial_w \CC(s|w)= \C(r){-}r \C'(r)\big|_{r=s/w}= 4-2\sqrt{(s/w)^2{+}4}\leq 0.
   \end{aligned} \ \ \right\} 
\\
\label{eq:CC.prop.f}
&\forall \, s\in \R\ \forall \, w>0: \quad (0,\infty)\ni
   \lambda \mapsto \CC(\lambda s|\lambda^2 w) 
 \text{ is increasing}.  
    \\
\label{eq:CC.prop.d}
    &\forall\, q>1: \quad \C(s) \leq \frac{q}{q{-}1} \CC(s|w) + \frac{4\,w^q}{q{-}1}.
\end{align}    
\end{subequations}
\end{lemma}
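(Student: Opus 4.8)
The plan is to dispatch \eqref{eq:CC.prop.b}, \eqref{eq:CC.prop.f} and \eqref{eq:CC.prop.a} as fairly direct consequences of the explicit formulas for $\C,\C'$ together with the two-sided bound $\C(s)\le s\C'(s)\le 2\C(s)$ from \eqref{eq:Cprime.bound}, and to reserve the real work for the magical estimate \eqref{eq:CC.prop.d}. For \eqref{eq:CC.prop.b} I differentiate the perspective: for $w>0$ and $r=s/w$ the chain rule gives $\partial_w\CC(s|w)=\partial_w\bigl(w\C(s/w)\bigr)=\C(r)-r\C'(r)$, which is the stated formula; inserting $\C(r)=2r\operatorname{Arsinh}(r/2)-2\sqrt{r^2+4}+4$ and $\C'(r)=2\operatorname{Arsinh}(r/2)$ collapses it to $4-2\sqrt{r^2+4}\le 0$, while the sign (hence monotonicity) is in any case immediate from $\C(r)\le r\C'(r)$; the jump to $\chi_0$ at $w=0$ is consistent with non-increasingness. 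For \eqref{eq:CC.prop.f} I set $g(\lambda)=\CC(\lambda s|\lambda^2 w)=\lambda^2 w\,\C\bigl(s/(\lambda w)\bigr)$ and compute $g'(\lambda)=\lambda w\bigl(2\C(r)-r\C'(r)\bigr)$ with $r=s/(\lambda w)$, which is nonnegative precisely because $r\C'(r)\le 2\C(r)$, again from \eqref{eq:Cprime.bound}.

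For the convexity statement \eqref{eq:CC.prop.a} joint convexity comes for free: by \eqref{eq:Def.CC.perspect} the map $(s,w)\mapsto\CC(s|w)=\sup_\zeta\{s\zeta-w\C^*(\zeta)\}$ is a supremum of functions that are affine in $(s,w)$, hence convex and lsc on all of $\R\times[0,\infty)$. For strictness I would compute the Hessian on the interior $w>0$; writing $r=s/w$ it equals $\tfrac{\C''(r)}{w}\left(\begin{smallmatrix}1&-r\\-r&r^2\end{smallmatrix}\right)$, a rank-one positive semidefinite matrix whose kernel is the radial direction $(s,w)$. Since $\C''(r)=2/\sqrt{r^2+4}>0$ everywhere, $\CC$ is strictly convex in every direction transverse to the rays; the radial degeneracy is forced by the $1$-homogeneity $\CC(\lambda s|\lambda w)=\lambda\CC(s|w)$, so this is the only sense in which "strictly convex" can hold.

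The main obstacle is the magical estimate \eqref{eq:CC.prop.d}. The case $w=0$ is trivial, and since $w\mapsto\CC(s|w)$ is non-increasing with $\CC(s|1)=\C(s)$, the case $w\le 1$ is also immediate because $\C(s)=\CC(s|1)\le\CC(s|w)\le\frac{q}{q-1}\CC(s|w)$. For general $w>0$ the idea is to pass to the dual side. With $q'\coloneqq q/(q-1)$, set $\zeta_*=\C'(s)$ and $\eta_0=\zeta_*/q'$, so that $q'\eta_0=\zeta_*$; the Legendre identity $\C(s)=s\zeta_*-\C^*(\zeta_*)$ together with $\CC(s|w)\ge s\eta_0-w\C^*(\eta_0)$ yields
\[
  \C(s)-\tfrac{q}{q-1}\CC(s|w)\ \le\ q'w\,\C^*(\eta_0)-\C^*(q'\eta_0).
\]
Hence it suffices to prove the pointwise dual inequality $q'w\,\C^*(\eta)-\C^*(q'\eta)\le \frac{4w^q}{q-1}$ for all $\eta\in\R$ and $w>0$. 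The gain is that this is now clean in $w$: for fixed $\eta$ the convex function $w\mapsto\frac{4}{q-1}w^q+\C^*(q'\eta)-q'w\,\C^*(\eta)$ attains its minimum over $w>0$ at $w_*^{\,q-1}=\C^*(\eta)/4$, with minimal value $\C^*(q'\eta)-\C^*(\eta)^{q'}/4^{\,q'-1}$. Thus the dual inequality reduces to the $w$-free claim $\C^*(\eta)^{q'}\le 4^{\,q'-1}\C^*(q'\eta)$.

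This last claim I would settle using $\C^*(\eta)=4\cosh(\eta/2)-4=8\sinh^2(\eta/4)$: substituting and simplifying (and using evenness to take $\eta\ge 0$) reduces it to the single-variable inequality $\sinh(q'x)\ge 2^{(q'-1)/2}\sinh^{q'}(x)$ for $x=|\eta|/4\ge 0$. This is proved by monotonicity: $h(x)=\log\sinh(q'x)-q'\log\sinh x-\tfrac{q'-1}{2}\log 2$ has $h'(x)=q'\bigl(\coth(q'x)-\coth x\bigr)\le 0$ since $q'\ge 1$ and $\coth$ is decreasing on $(0,\infty)$, while $h(0^+)=+\infty$ and $h(x)\to\tfrac{q'-1}{2}\log 2\ge 0$, so $h\ge 0$ throughout. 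Tracing the reductions back gives \eqref{eq:CC.prop.d}. I expect the only genuinely delicate point to be the dual bookkeeping, in particular the choice $\eta_0=\C'(s)/q'$ that turns the intractable primal optimization (whose stationarity condition $\C'(s)=q'\C'(s/w)$ is transcendental) into the clean convex minimization over $w$; everything downstream is elementary.
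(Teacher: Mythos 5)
Your treatment of \eqref{eq:CC.prop.b}, \eqref{eq:CC.prop.f} and of the convexity in \eqref{eq:CC.prop.a} coincides in substance with the paper's proof: the two monotonicity statements follow from the two halves of \eqref{eq:Cprime.bound}, and joint convexity from the sup-of-affine-functions representation \eqref{eq:Def.CC.perspect}. Your caveat on strictness is in fact a legitimate sharpening of the statement: since $\CC(\lambda s|\lambda w)=\lambda\,\CC(s|w)$, the perspective function is linear along rays through the origin, so literal strict convexity on $\R\times[0,\infty)$ is impossible; your rank-one Hessian computation correctly identifies the radial direction as the only degeneracy. Note that the paper's own argument (Legendre transform of a convex--concave integrand) likewise yields only convexity, so the lemma's wording is loose and your explicit discussion is the more careful one.

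Where you genuinely diverge is the magical estimate \eqref{eq:CC.prop.d}, and your proof is correct. The paper (Appendix~A, Proposition~\ref{prop:MagEst}) works on the primal side: it writes $\C$ as the infimal convolution $\C(s)=\min\{2\LB(a_1)+2\LB(-a_2):\,a_1+a_2=s\}$ of Boltzmann functions and imports the entropy inequality $w\LB(c/w)\ge\frac{p-1}{p}\LB(c)-(p-1)U_p(w)$ of \cite{FHKM22GEAE}, which transfers term by term through the inf-convolution. You instead dualize: the normalization $\eta_0=\C'(s)/q'$ converts the claim into the $s$-free dual inequality $q'w\,\C^*(\eta)-\C^*(q'\eta)\le\frac{4w^q}{q-1}$; the explicit convex minimization over $w$ (with minimizer $w_*^{q-1}=\C^*(\eta)/4$) reduces this to the supermultiplicativity property $\C^*(\eta)^{q'}\le 4^{q'-1}\C^*(q'\eta)$; and the identity $\C^*(\eta)=8\sinh^2(\eta/4)$ turns that into $\sinh(q'x)\ge 2^{(q'-1)/2}\sinh^{q'}(x)$, which your $\coth$-monotonicity argument settles — I checked the bookkeeping, including the boundary behavior $h(0^+)=+\infty$ and $h(x)\to\frac{q'-1}{2}\log 2\ge 0$, and the trivial edge cases $w=0$ and $\C^*(\eta)=0$. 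The two routes buy different things: the paper's exposes the structural link between the cosh dissipation and relative entropy (the identity \eqref{eq:LB.p.LB} it reuses is of independent interest there), whereas yours is fully self-contained and elementary, requiring neither the inf-convolution representation nor any external Boltzmann inequality, and it isolates a clean dual inequality for $\C^*$ that could be useful elsewhere.
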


\begin{proof}
Property \eqref{eq:CC.prop.a} follows from the fact that $\CC(\cdot| w)$ is the Legrendre-Fenchel transform of $(\zeta,w)\mapsto w\C^*(\zeta)$ which is convex in $\zeta$ and concave in $w$. 

The relation in \eqref{eq:CC.prop.b} follow by a direct computation using the 
lower estimate for $s \C'(s)$ in \eqref{eq:Cprime.bound}. Assertion \eqref{eq:CC.prop.f} follows by using the upper bound for $s\C'(s)$ in \eqref{eq:Cprime.bound}. 
For the proof of the magical estimate 
\eqref{eq:CC.prop.d} we refer to Appendix \ref{app:CC.estim}. 
\end{proof}

We call \eqref{eq:CC.prop.d} the \emph{magical estimate for $\C$ and its perspective 
function $\CC$}, since such an estimate cannot be expected from general dissipation
functions. For instance, for $\Phi: s \mapsto c_1|s|^q$ with $q>1$ the infimum of 
$w\Phi(s/w) + c_2w^p$ only provides an upper bound for $s \mapsto c_3 |s|^r$ with 
$r=qp/(p{+}q{-} 1) \lneqq q$. 
The magical estimate \eqref{eq:CC.prop.d} will be important for
proving the chain rule, see the proof of Theorem \ref{thm:CRContinuous} in Section \ref{sec:ChainRuleCont}.

As a weaker replacement of \eqref{eq:CC.prop.d} we will need the following result
that is proved in Appendix \ref{app:Superlinear}.

\begin{lemma}[Superliner estimates]
\label{lem:Superlinear}
Consider an even, differentiable, and superlinear function $\phi:\R\to [0,\infty)$ 
such that $ s \phi'(s) \geq \phi(s)$ and 
another non-decreasing superlinear function $\psi:[0,\infty) \to [0,\infty)$.
Then, the function $\Xi_{\phi,\psi}:\R\to [0,\infty)$ defined via 
\[
\Xi_{\phi,\psi}(s) \coloneqq \inf_{w>0} \{w \phi(s/w) + \psi(w)\},
\]
is even, non-decreasing and superlinear. 
\end{lemma}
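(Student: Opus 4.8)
The plan is to verify the three asserted properties in turn, treating evenness and monotonicity as direct consequences of the corresponding structure of $\phi$, and reserving the real work for superlinearity. Evenness is immediate: since $\phi$ is even, the map $w\mapsto w\phi(s/w)+\psi(w)$ is unchanged when $s$ is replaced by $-s$, so taking the infimum over $w>0$ gives $\Xi_{\phi,\psi}(s)=\Xi_{\phi,\psi}(-s)$. Finiteness follows by testing with $w=1$, which yields $\Xi_{\phi,\psi}(s)\le\phi(s)+\psi(1)<\infty$.

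For monotonicity I first observe that the hypothesis $s\phi'(s)\ge\phi(s)\ge0$ forces $\phi'(s)\ge\phi(s)/s\ge0$ for $s>0$, so $\phi$ is non-decreasing on $(0,\infty)$; combined with evenness this means $\phi$ is non-decreasing in $|s|$. Consequently, for every fixed $w>0$ the function $s\mapsto w\phi(s/w)+\psi(w)$ is non-decreasing in $|s|$ (the term $\psi(w)$ being constant in $s$), and since a pointwise infimum of functions each non-decreasing in $|s|$ is again non-decreasing in $|s|$, the same holds for $\Xi_{\phi,\psi}$.

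The main obstacle is superlinearity, which I would prove by contradiction. Suppose $\Xi_{\phi,\psi}$ is not superlinear; then, using evenness, there is $M>0$ and a sequence $s_n\to+\infty$ with $\Xi_{\phi,\psi}(s_n)\le M s_n$. For each $n$ choose a near-minimizer $w_n>0$ so that
\[
w_n\,\phi(s_n/w_n)+\psi(w_n)\le M s_n+1 .
\]
Writing $t_n\coloneqq s_n/w_n$ and discarding the non-negative $\psi$-term yields $\phi(t_n)/t_n\le M+1/s_n\le M+1$. Were $t_n$ unbounded, superlinearity of $\phi$ would force $\phi(t_n)/t_n\to\infty$ along a subsequence, a contradiction; hence $t_n\le K$ for some constant $K$, and therefore $w_n=s_n/t_n\ge s_n/K\to\infty$.

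I then exploit the second term: from $\psi(w_n)\le M s_n+1 = M w_n t_n+1\le MK\,w_n+1$ I obtain $\psi(w_n)/w_n\le MK+1/w_n\to MK$. But $w_n\to\infty$ together with the superlinearity of $\psi$ gives $\psi(w_n)/w_n\to\infty$, a contradiction. Thus no such sequence exists and $\Xi_{\phi,\psi}(s)/|s|\to\infty$ as $|s|\to\infty$, completing the proof. The only delicate point is organizing the two superlinearity bounds in the right order, so that boundedness of $t_n$ extracted from the $\phi$-term forces $w_n\to\infty$, which is then contradicted by the $\psi$-term; beyond passing to a subsequence no compactness is required.
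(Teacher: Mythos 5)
Your proof is correct, and its superlinearity step takes a genuinely different route from the paper's. The paper argues directly: using that $w\mapsto w\phi(s/w)$ is non-increasing (this is where $s\phi'(s)\ge\phi(s)$ enters again) and that $\psi$ is non-decreasing, it splits the infimum at an arbitrary $w_*$ to obtain the pointwise bound $\Xi_{\phi,\psi}(s)\ge\min\{w_*\phi(s/w_*),\,\psi(w_*)\}$, then chooses $w_*=w_M$ with $\psi(w_M)\ge M w_M$ and evaluates at $s_M=M^{1/2}w_M$, giving $\Xi_{\phi,\psi}(s_M)/s_M\ge\min\{\phi(M^{1/2})/M^{1/2},\,M^{1/2}\}\to\infty$. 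You instead argue by contradiction with near-minimizers $w_n$, extracting boundedness of $t_n=s_n/w_n$ from the $\phi$-term and then the contradiction from the $\psi$-term. Two observations on the comparison: (i) your superlinearity argument never invokes the differential inequality $s\phi'(s)\ge\phi(s)$ (you use it only for monotonicity of $\Xi_{\phi,\psi}$), so in this respect it is slightly more general than the paper's, which needs that inequality to make $w\mapsto w\phi(s/w)$ monotone; (ii) your contradiction starts from an \emph{arbitrary} sequence $s_n\to\infty$ with $\Xi_{\phi,\psi}(s_n)\le Ms_n$, so it directly yields $\Xi_{\phi,\psi}(s)/|s|\to\infty$, whereas the paper proves blow-up along the special sequence $s_M$ and then appeals to monotonicity of $\Xi_{\phi,\psi}$ --- a step that, to be fully airtight, needs the extra remark that superlinearity of $\psi$ gives $\psi(w)\ge Mw$ for \emph{all} sufficiently large $w$ (not just one $w_M$), so that the bound in fact covers all large $s$; your argument sidesteps this issue entirely. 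What the paper's direct approach buys in exchange is an explicit quantitative lower bound on $\Xi_{\phi,\psi}(s)/s$, in the spirit of the magical estimate \eqref{eq:CC.prop.d} that the lemma generalizes, while yours is purely qualitative.
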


For $c>0$, the dual dissipation potential induces a slope term by the relation 
$S_N(c)=R^*_N(c,-\nablaDisc\rmD E_N(c))$. This definition can then be extended to all 
$c\in\calM_+(X_N)$ (cf.\ \cite[Remark 3.7]{lam2024variational}) by exploiting the identity
\eqref{eq:CStarandLog} for $\C^*$ and $\log$. This yields the so-called 
\textit{relaxed slope}.

\begin{definition}[Relaxed slope]
\label{def:slope_disc}
The \textit{relaxed slope} $S_N:\calM_+(X_N)\to[0,\infty)$ is defined by
\begin{align*}
        S_N(c) &\coloneqq S_{N,\diff}(c)+S_{N,\react}(c),
\\
        S_{N,\diff}(c) &\coloneqq  \frac{1}{N^d} \sum_{k\in\bbZ_N^d} 
        \sum_{i\in I}  \sum_{e\in \Eu} 2\delta_i N^2 
        \sqrt{\discref_{i,k+e}\discref_{i,k}}\bigg(\sqrt{\frac{c_{i,k+e}}
        {\discref_{i,k+e}}} - \sqrt{\frac{c_{i,k}}{\discref_{i,k}}}\bigg)^2,
\\
        S_{N,\react}(c) &\coloneqq  \frac{1}{N^d} \sum_{k\in\bbZ_N^d} 
        \sum_{r\in R} 2\kappa_r  \sqrt{\discref_k^{\alpha^r}\discref_k^{\beta^r}}
        \Bigg(\bigg(\frac{c_k} {\discref_k}\bigg)^{\alpha^r/2} 
         - \bigg(\frac{c_k}{\discref_k}\bigg)^{\beta^r/2}\Bigg)^2.
    \end{align*}
\end{definition}

Next, we introduce the primal dissipation potential $R_N$.

\begin{definition}[Primal dissipation potential]
\label{def:R_disc}
We define thr primal dissipation potential $R_N: \calM_+(X_N)\times \calM(Y_{N,\diff}) \times\calM(Y_{N,\react})\to[0,\infty)$ for $c \in\calM_+(X_N)$, 
$F \in \calM(Y_{N,\diff})$, and $J \in \calM(Y_{N,\react})$ by
\begin{align*}
        R_N(c,F,J)&\coloneqq 
        R_{N,\diff}(c,F) + R_{N,\react}(c,J),
\\
        R_{N,\diff}(c,F) &\coloneqq \frac{1}{N^d}\sum_{k\in\bbZ_N^d} 
        \sum_{i\in I} \sum_{e\in \Eu}  \CC\left(F^N_{i,k,e} \middle| 
                  N^2\delta_{i}(c_{i,k}^Nc_{i,k+e}^N)^{1/2}\right), 
\\
        R_{N,\react}(c,J) &\coloneqq \frac{1}{N^d}\sum_{k\in\bbZ_N^d} 
        \sum_{r\in R}\CC\left(J_{r,k}^N \middle| 
              \kappa_r(c_k^{N})^{(\alpha^r+ \beta^r)/2}\right).
    \end{align*}
\end{definition}
Note that $R_N$ is the convex conjugate of $R_N^\ast$ with respect to the second and 
third arguments. Indeed, the dualities $\big(R_{N,\diff}^*\big)^*=R_{N,\diff}$ and 
$\big(R_{N,\react}^*\big)^*=R_{N,\react}$ follow from the duality of 
$\C$ and $\C^\ast$. The duality of the sums then follows from the fact that the 
summands are independent of each other.

\begin{definition}[Energy-dissipation functional]\label{def:EDF_disc}
    We introduce the dissipation functional $D_N:\rmL^1(0,T;\calM_+(X_N)\times \calM(Y_{N,\diff})\times\calM(Y_{N,\react})) \to [0,\infty]$ by
    \begin{align}\label{eq:DiscreteD}
        D_N(c,F,J) \coloneqq \int_0^T\{R_N(c(t),F(t),J(t)) + S_N(c(t))\} \dd t.
    \end{align}
    Furthermore, we introduce the energy-dissipation functional
    \begin{align*}
        L_N(c,F,J) \coloneqq E_N(c(T))- E_N(c(0)) + D_N(c,F,J).
    \end{align*}
\end{definition}

\begin{definition}[Continuity equation]\label{def:CE_disc}
    The operator $\nablaDisc$ gives rise to the continuity equation
    \begin{align*}
        \dot c =   \nablaDisc^*(F,J) = - \divDDiff F +  
          \Gamma^* J.
    \end{align*} 
    We denote by $\lCE_N$ the set of triples $(c,F,J)\in\AC([0,T];\calM(X_N))\times \rmL^1(0,T;\calM(Y_{N,\diff})) \times \rmL^1(0,T;\calM(Y_{N,\react}))$ satisfying the above equation. 
\end{definition}

 In this definition the domains of the sets $X_N$, $Y_{N,\diff}$, and $ Y_{N,\react}$ are finite sets, such that the topology for the measure spaces is irrelevant. Only in the continuous case, it will be important to use the the narrow topology, see Definition \ref{def:CE_cont}. Moreover, following the proof of \cite[Lem.\,3.1]{Erbar2014}, we observe that this definition is indeed well-posed for $F$ and $J$ satisfying $\rmL^1$-bounds in time.

For further reference, we note that $(c,F,J)\in\lCE_N$ if and only if for all $\varphi\in \rmC(X_N)$ and all $k\in\bbZ^d_N$ and a.e. $t\in[0,T]$ it holds
\begin{align*}
    \frac{\rmd}{\rmd t}\pra[\bigg]{\sum_{i\in I}\varphi_{i,k} c_{i,k}} = \sum_{i\in I}\sum_{e\in\Eu}(\nablaDDiff\varphi)_{i,k,e} F_{i,k,e} + \sum_{r\in R}(\nablaDReact\varphi)_{r,k}J_{r,k}.
\end{align*}

We conclude this section by specifying our notion of solution for 
\eqref{eq:RDSysDisc}.  For the subsequent analysis, it will be crucial to use
already the notion of \emph{energy-dissipation balance solutions} (in short EDB solutions) that are based on the energy-dissipation functional $L_N$. Theorem \ref{thm:DiscEDB.ODE} will provide a rigorous connection between this notion and the ODE system \eqref{eq:RDSysDisc} based on the corresponding chain rule as explained in Section \ref{subsec:GradSyst}. Even in this finite-dimensional case, this equivalence is
non-trivial.

\begin{definition}[ Discrete EDB solutions]
\label{def:sol_disc}
We say $c\in \AC([0,T];\calM_+(X_N))$ is a \emph{discrete EDB solution} of 
\eqref{eq:RDSysDisc} with initial datum $c_0\in \calM_+(X_N)$, if $c(0) = c_0$ and  
if there exists $(F,J)$ such that $(c,F,J) \in \lCE_N$ and for $0\leq s < t\leq T$ 
we have 
\begin{equation}
  \label{eq:DiscEDB.st}
L_N^{[s,t]}(c,F,J) \coloneqq E_N(c(t))- E_N(c(s)) + \int_s^t \big( R_N(c,F,J) 
 + S_N(c) \big) \dd r =0.  
\end{equation}
\end{definition}

\subsection{Continuous-space reaction-diffusion gradient system}

We present the gradient structure for the reaction-diffusion system for the case of the  `continuous space' given by the torus $\bbT^d$; as a short-hand we will use the name ``\emph{continuum system}''.   The base spaces we consider are  
\begin{align*}
    X&&&\coloneqq (X^\dom,X^\tar) &&\coloneqq (\bbT^d,\R^I),\\
    Y_\diff&&&\coloneqq (Y^\dom_\diff,X^\tar_\diff)&&\coloneqq (\bbT^d,(\R^d)^I),\\
    Y_\react&&&\coloneqq (Y^\dom_\react,X^\tar_\react)&&\coloneqq (\bbT^d,\R^R),\\
    Y&&&\coloneqq (Y^\dom,Y^\tar)&&\coloneqq (Y^\dom_\diff\times Y^\dom_\react,Y^\tar_\diff\times Y^\tar_\react).
\end{align*}
We recall the short notations $\rmC(X) \coloneqq \rmC(X^\dom;X^\tar)$ and $\rmC([0,T]\times X)\coloneqq \rmC([0,T]\times X^\dom;X^\tar)$ as well as analogous notations for all other spaces of functions/measures over these spaces.

Similar to before, we abuse notation, denoting by $\langle\cdot,\cdot\rangle$ the dual products for vectors as well as components, e.g., for $(\zeta,\xi)\in \rmC(Y)$ and $(u,v)\in \calM(Y)$ we write
\begin{align*}
    \langle(\xi,\zeta),(u,v)\rangle &= \langle \xi,u\rangle_N + \langle \zeta,v\rangle 
    = \sum_{i\in I} \langle \xi_i,u_i\rangle + \sum_{r\in R}\langle \zeta_r,v_r\rangle \\
    &= \int_{\bbT^d}\bigg(\sum_{i\in I}\sum_{l=1}^d \xi_{i,l}(x)u_{i,l}(x) + \sum_{r\in R}\zeta_r(x)v_r(x)\bigg)\dd x,
\end{align*}
and similarly for other functions/measures defined over $X$ or $Y$.

The notation for sums over $I$ and $R$ is used also in the continuous context:
\[
    \gamma\bullet\xi =\sum_{i\in I}\gamma_i\xi_i, \quad f\bullet \psi = \sum_{r \in R} f_r\psi_r. 
\]

Given stoichiometric vectors $\alpha^r,\beta^r\in [0,\infty)^I$, 
$\gamma^r =\alpha^r-\beta^r$, $r\in R$, we consider the (classical) gradient 
$\nabla$ and the stoichiometric matrix $\Gamma$ as well as the 
 linear gradient map  $\nablaCont$ given by
\begin{align*}
    \nablaCont&:\rmC^\infty_c(X) \to \rmC^\infty_c(Y),\quad 
    &&\nablaCont \varphi \coloneqq  (\nablaCDiff\varphi ,  
     \nablaDReact\varphi) \quad \text{with} 
     \\
    \nablaCDiff&:\rmC^\infty_c(X) \to \rmC^\infty_c(Y_\diff),\quad 
    &&\nablaCDiff\varphi_i(x) \coloneqq (\partial_{x_l}\varphi_i(x))_{l=1,\ldots,d},\\
    \nablaCReact&:\rmC^\infty_c(X) \to \rmC^\infty_c(Y_\react),\quad 
    &&\nablaCReact\varphi_r(x) \coloneqq \sum_{i\in I}\gamma^r_i\varphi_i(x) = \gamma^r\bullet\varphi(x). 
\end{align*}
Their duals are given by
\begin{align*}
    \nablaCont^\ast&:(\rmC^\infty_c(Y))^* \to (\rmC^\infty_c(X))^*,\quad 
    &&\nablaCont^\ast(\xi,\zeta) \coloneqq  -\divCDiff\xi + \Gamma^\ast\zeta
      \quad \text{with}    \\
    -\divCDiff&:(\rmC^\infty_c(Y_\diff))^* \to (\rmC^\infty_c(X))^*,\quad 
    &&-\divCDiff\xi_i(x) \coloneqq -\sum_{l=1}^d\partial_{x_l}\xi_{i,l}(x),\\
    \Gamma^\ast&:(\rmC^\infty_c(Y_\react))^* \to (\rmC^\infty_c(X))^*,\quad 
    &&\Gamma^\ast\zeta_r(x) \coloneqq \sum_{i\in I}\gamma^r_i\zeta_i(x), = \gamma^r\bullet\zeta(x). 
\end{align*}
    
Elements of the state space $\calM_+(X)$ are denoted by $\rho$ and will be called \emph{(continuous-space) chemical concentrations}. In our situation the measures will always have a densitiy with respect to the Lebesgue measure on $\bbT^d$, which (slightly abusing notation) will also be denoted by $\rho$.

    We consider the \emph{relative entropy} with respect to a \emph{reference measure} $\omega\in\calM_+(X)$ 
    \begin{equation}\label{eq:torus_energy}
        \cE(\rho) =\sum_{i\in I}\int_{\bbT^d}\LB \bra[\bigg]{\frac{\rho_i}{\omega_i}}\omega_i\dd x,
    \end{equation}
    where we recall the Boltzmann function $\LB (r)=r\log r-r+1$.

    The \emph{continuous dual dissipation potential} $\calR^*: \calM_+(X)\times \rmC(Y)\to[0,\infty)$ is given for $\rho \in\calM_+(X)$, $\xi \in \rmC(Y_\diff)$, and $\zeta \in \rmC(Y_\react)$ by
    \begin{align*}
        \calR^*(\rho,(\xi,\zeta)) &\coloneqq \calR^*_\diff (\rho,\xi) + \calR^*_\react (\rho,\zeta),\\
        \calR^*_\diff (\rho,\xi) &\coloneqq \sum_{i\in I}\frac{\delta_i}{2} \int_{\bbT^d}\abs{\xi_i}^2\rho_i\dd x,\\
        \calR^*_\react (\rho,\zeta) &\coloneqq  \sum_{r\in R} \kappa_r \int_{\bbT^d}\bra[\big]{\rho^{\alpha^r}\rho^{\beta^r}}^{1/2} \C^*(\zeta_r)\dd x.
    \end{align*}
    As in the discrete setting, from now on we will write $\calR^*(\rho,\xi,\zeta)$ instead of $\calR^*\big(\rho,(\xi,\zeta)\big)$ and do the same for similar objects. 
    
    These objects form the continuous gradient system in continuity format $(X,Y,\nablaCont,\calE,\calR^\ast)$.

For $\rho>0$ smooth, the dual dissipation potential induces a slope term by the relation $\calS(\rho)=\calR^*(\rho,-\nablaCont\rmD \calE(\rho))$. This definition can then be extended to all $\rho\in\calM_+(X)$ (cf.\ \cite[(3.24)]{heinze2024gradient}) yielding the relaxed slope.

\begin{definition}[Relaxed slope]\label{def:slope_cont}
    The relaxed slope $\calS:\calM_+(X)\to[0,\infty)$ is defined by
    \begin{align*}
        \calS(\rho) &\coloneqq \calS_\diff(\rho)+\calS_\react(\rho),\\
        \calS_\diff(\rho) &\coloneqq  \sum_{i\in I} 2\delta_i \int_{\bbT^d}\abs[\bigg]{\nabla\sqrt{\frac{\rho_i}{\omega_i}}}^2\omega_i\dd x,\\
        \calS_\react(\rho) &\coloneqq \sum_{r\in R} 2\kappa_r \int_{\bbT^d} \sqrt{\omega^{\alpha^r}\omega^{\beta^r}} \Bigg(\bra[\bigg]{\frac{\rho}{\omega}}^{\alpha^r/2} - \bra[\bigg]{\frac{\rho}{\omega}}^{\beta^r/2}\Bigg)^2\dd x.
    \end{align*}
\end{definition}

Next, we introduce the primal dissipation potential $\calR: \calM_+(X)\times \calM(Y)\to[0,\infty)$, which, as before, is given as the convex conjugate of $\calR^*$ with respect to the second argument.
\begin{definition}[Primal dissipation potential]\label{def:R_cont}
    The primal dissipation potential $\calR: \calM_+(X)\times \calM(Y_\diff)\times \calM(Y_\react)\to[0,\infty)$ is defined for $\rho \in\calM_+(X)$, $f \in \calM(Y_\diff)$, and $j \in \calM(Y_\react)$ by
    \begin{align*}
        \calR(\rho,f,j)&\coloneqq 
        \calR_\diff(\rho,f) + \calR_\react(c,j),\\
        \calR_\diff(\rho,f) &\coloneqq 
        \begin{cases}
            \sum_{i\in I}\frac{1}{2\delta_i }\int_{\bbT^d}\frac{|f_i|^2}{\rho_i}
            \dd x  &\text{for } f_i = f_i\dd x\ll \scrL^d,\\
            \infty & \text{otherwise},
        \end{cases}\\
        \calR_\react(\rho,j) &\coloneqq 
        \begin{cases}
            \sum_{r\in R} \int_{\bbT^d}\CC\left(j_r \middle|\kappa_r(\rho^{\alpha^r}\rho^{\beta^r})^{1/2}\right)\dd x 
             & \text{for }j_r = j_r\rmd x\ll \scrL^d,\\
            \infty & \text{otherwise},
        \end{cases}
    \end{align*}
    where we again made a slight abuse of notation.
\end{definition}

\begin{definition}[Energy-dissipation functional]\label{def:EDF_cont}
    We introduce the dissipation functional $\calD:\rmL^1(0,T;\calM_+(X)\times \calM(Y_\diff)\times \calM(Y_\react)) \to \R$ by
    \begin{align}\label{eq:ContinuousD}
        \calD(\rho,f,j) \coloneqq \int_0^T\{\calR(\rho(t),f(t),j(t)) + \calS(\rho(t))\} \dd t.
    \end{align}
    Furthermore, we introduce the energy-dissipation functional
    \begin{align*}
        \calL(\rho,f,j) \coloneqq \calE(\rho(T))- \calE(\rho(0)) + \calD(\rho,f,j).
    \end{align*}
\end{definition}

\begin{definition}[Continuity equation]\label{def:CE_cont}
    The operator $\nablaCont$ gives rise to the continuity equation
    \begin{align*}
        \partial_t \rho =  \nablaCont^\ast(f,j)= -\divCDiff f + 
         \nablaCReact^* j. 
    \end{align*}
    We denote by $\CE$ the set of triples $(\rho,f,j)\in\AC([0,T];\calM_+(X)) \times \rmL^1(0,T;\calM(Y_\diff)) \times\\ \rmL^1(0,T;\calM(Y_\react))$ 
    satisfying for all $\varphi\in \rmC^1(X)$
    \begin{align*}
        \frac{\rmd}{\rmd t}\pra[\bigg]{\int_{\bbT^d}\sum_{i\in I}\varphi_i \rho_i \dd x} 
        = \int_{\bbT^d}\sum_{i\in I}\sum_{e\in\Eu} (\nablaCDiff\varphi)_{i,e} f_{i,e} \dd x 
        + \int_{\bbT^d}\sum_{r\in R}(\nablaCReact\varphi)_r j_r\dd x.
    \end{align*}
\end{definition}

 It is important here to recall that the absolute continuity in 
$\AC([0,T];\calM_+(X))$ has to be understood with respect to a metric generating 
the the narrow topology in $\calM_+(X)$. Moreover, $ \rmL^1(0,T;\calM(Y_\diff))$ is 
meant to contain weakly measurable functions with 
$t\mapsto \|f(t)\|_{\calM(Y_\diff)}$ 
lies in $\rmL^1([0,T])$. 

Combining the proofs of \cite[Lem.\,8.1.2]{AGS} and \cite[Lem.\,3.1]{Erbar2014}, we observe that this definition is well-posed for $f$ and $j$ satisfying $\rmL^1$-bounds.

We conclude this section by specifying our notion of solutions for the 
continuous-space reaction-diffusion gradient system \eqref{eq:RDSysCont}. 

\begin{definition}[Continuum EDB solutions for \eqref{eq:RDSysCont}]
\label{def:sol_cont}
We say $\rho\in \AC([0,T];\calM_+(X))$ is a \emph{continuum EDB solution} of \eqref{eq:RDSysCont} if $\sup_{t\in [0,T]} \calE(\rho(t)) < \infty$ and if there exists $(f,j)$ such that $(\rho,f,j) \in\CE$,
$\calD(\rho,f,j)<\infty$, and for $0\leq s < t\leq T $ we have 
\begin{align*}
\calL^{[s,t]}(\rho,f,j)\coloneqq \calE(\rho(t)) - \calE(\rho(s)) + \int_s^t \!\! 
 \big( \calR(\rho,f,j)+\calS(\rho)\big) \dd r =0. 
\end{align*}
\end{definition}

In the present paper, we will not show that all continuum EDB solutions $\rho$ are 
weak solutions (in a suitable sense). However, under the additional assumption of positivity and boundedness for all $\rho_i$, Proposition \ref{prop:CEDB.vs.CRDS} provides a result in this direction. Instead, we focus on the convergence of
discrete EDB solutions $c^N$ in the sense of Definition \ref{def:sol_disc} to 
continuum EDB solutions. In fact, we establish the stronger EDP-convergence which also asks convergence of $E_N$ to $\calE$ and $D_N$ to $\calD$. 

The strategy is as explained in Section \ref{ss:GeneralConvergenceStrategy}. 
By a limit passage we obtain $\calL^{[0,T]}(\rho,f,j)\leq 0$, see the 
lower-limit estimates in Section \ref{s:ProofConvergence}.  In Section \ref{s:CRs} 
we establish  the chain rule estimate $\calL^{[s,t]}(\rho,f,j)\geq 0$ which then 
implies that $\rho$ is a continuum EDB solution. In Section  \ref{sec:MainResults} 
we state the precise assumptions and results. 

We close this subsection with stating a conditioned Energy-Dissipation Principle. 
If we have lower and upper bounds of the densities, then it follows that
functions are continuum EDB solutions if and only if they are weak solutions. 

\begin{proposition}[Continuum EDB and weak solutions for \eqref{eq:RDSysCont}]
\label{prop:CEDB.vs.CRDS}  
Consider concentrations $\rho \in \rmH^1([0,T];\rmH^{-1}(\bbT^d))$ $\cap$ $ \rmL^2([0,T];\rmH^1(\bbT^d))$ 
and $\sigma \in (0,1)$ such that  $\rho$ 
satisfies $\rho_i(t,x)\in [\sigma , 1/\sigma ]$ for all $i\in I$ and a.a.\ 
$(t,x)\in [0,T]\times \bbT^d$. Then, $\rho$ is a weak solution of 
\eqref{eq:RDSysCont} if and only if $(\rho,f,j)\in \CE$ with 
\begin{equation} 
 \label{eq:CR_cont}
f_i = - \delta_i \nabla \rho_i \quad \text{and} \quad
j_r = \kappa_r \omega^{(\alpha^r+\beta^r)/2}  \Big( \frac{\rho^{\alpha^r}}
{\omega^{\alpha^r} }  - \frac{\rho^{\beta^r}}{\omega^{\beta^r}} \Big), 
\end{equation}
 is a continuum EDB solution in the sense of Definition \ref{def:sol_cont}.
\end{proposition}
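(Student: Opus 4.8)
The plan is to reduce the claimed equivalence to a single statement: that for $\rho$ with the stated bounds and regularity the pair $(\rho,f,j)$ with fluxes \eqref{eq:CR_cont} automatically satisfies the energy--dissipation balance $\calL^{[s,t]}(\rho,f,j)=0$. Indeed, inserting \eqref{eq:CR_cont} into the continuity equation of Definition~\ref{def:CE_cont}, $\partial_t\rho=-\divCDiff f+\nablaCReact^\ast j$, reproduces precisely the distributional formulation of \eqref{eq:RDSysCont}; hence ``$\rho$ is a weak solution'' is the same as ``$(\rho,f,j)\in\CE$ with $f,j$ from \eqref{eq:CR_cont}''. The only further content of being a continuum EDB solution is the finiteness $\sup_t\calE(\rho(t))<\infty$, $\calD(\rho,f,j)<\infty$ together with $\calL^{[s,t]}=0$, and since this balance appears identically in both implications, it suffices to prove it once.

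First I would record the a priori bounds. From $\sigma\le\rho_i\le1/\sigma$ one immediately gets $\sup_t\calE(\rho(t))<\infty$ and that $\mu_i\coloneqq(\rmD\calE(\rho))_i=\log(\rho_i/\omega_i)$ is bounded; together with $\rho\in\rmL^2([0,T];\rmH^1)$ and the smoothness of $\omega$ this gives $\nabla\mu_i\in\rmL^2([0,T]\times\bbT^d)$, so $\rmD\calE(\rho)\in\rmL^2([0,T];\rmH^1)$. Feeding \eqref{eq:CR_cont} into $\calR$ and using the bounds yields $\calD(\rho,f,j)<\infty$: the diffusive density $|f_i|^2/\rho_i$ is controlled by $|\nabla\rho_i|^2/\sigma$, while the reactive density $\CC(j_r\,|\,\kappa_r(\rho^{\alpha^r}\rho^{\beta^r})^{1/2})$ is bounded because both arguments are bounded above and the weight is bounded below.

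Next comes the pointwise Fenchel equality. The fluxes \eqref{eq:CR_cont} are exactly those attaining equality in the Fenchel--Young inequality between $\calR(\rho,\cdot)$ and $\calR^\ast(\rho,\cdot)$ at the energetic covector $-\nablaCont\rmD\calE(\rho)$: for diffusion $f_i=\delta_i\rho_i\,\xi_i$ with $\xi_i=-\nabla\mu_i$, and for reactions $j_r=\kappa_r(\rho^{\alpha^r}\rho^{\beta^r})^{1/2}(\C^\ast)'(\zeta_r)$ with $\zeta_r=-\gamma^r\bullet\mu$, the second identity in \eqref{eq:CStarandLog} turning the latter into the stated form. Moreover, the first identity in \eqref{eq:CStarandLog} shows $\calR^\ast(\rho,-\nablaCont\rmD\calE(\rho))=\calS(\rho)$, i.e.\ the dual dissipation at the energetic covector equals the relaxed slope of Definition~\ref{def:slope_cont}. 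Consequently, a.e.\ in $(t,x)$,
\[
 \calR(\rho,f,j)+\calS(\rho)=-\langle\nablaCont\rmD\calE(\rho),(f,j)\rangle .
\]

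The main obstacle is the rigorous chain rule, which here must hold as an equality rather than the mere inequality furnished in general by Theorem~\ref{thm:CRContinuous}. I would show
\[
 \calE(\rho(t))-\calE(\rho(s))=\int_s^t\langle\partial_t\rho,\rmD\calE(\rho)\rangle_{\rmH^{-1},\rmH^1}\dd r=-\int_s^t\langle\nablaCont\rmD\calE(\rho),(f,j)\rangle\dd r,
\]
the first equality being the chain rule for the relative entropy along the $\rmH^{-1}$-curve $\rho$ and the second using $(\rho,f,j)\in\CE$ with the spatial integration by parts justified by $\rmD\calE(\rho)\in\rmL^2([0,T];\rmH^1)$. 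Whereas the general argument is delicate because the non-convex reactive flux must be controlled by the magical estimate \eqref{eq:CC.prop.d}, here the strong hypotheses collapse this difficulty: $\log(\rho/\omega)$ is a bounded $\rmL^2([0,T];\rmH^1)$ integrand and $\partial_t\rho\in\rmL^2([0,T];\rmH^{-1})$, so the two pair in the standard duality. I would make this precise by mollifying $\rho$ in time (equivalently, approximating $\log$ by smooth bounded functions), applying the classical chain rule to the regularization, and passing to the limit by dominated convergence with the uniform bounds $\sigma\le\rho_i\le1/\sigma$ supplying the majorants. Combining this equality with the pointwise Fenchel identity and integrating over $[s,t]$ gives $\calL^{[s,t]}(\rho,f,j)=0$, which establishes both implications.
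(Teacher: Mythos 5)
Your proposal is correct in its analytic core and, for the substantive direction, follows the same route as the paper: both exploit $\rho_i\in[\sigma,1/\sigma]$ to make $\mu=\log(\rho/\omega)$ an admissible test function (equivalently, to reduce to the classical chain rule for the Boltzmann entropy on a compact subinterval of $(0,\infty)$), and both combine the resulting chain-rule identity with the identity $\calS(\rho)=\calR^*(\rho,-\nablaCont\rmD\calE(\rho))$ (via \eqref{eq:CStarandLog}) and the Fenchel--Young equality attained by the fluxes \eqref{eq:CR_cont} to produce $\calL^{[s,t]}(\rho,f,j)=0$. Where you genuinely differ is the logical organization: you observe that, since the proposition prescribes the fluxes in both directions, membership $(\rho,f,j)\in\CE$ with the fluxes \eqref{eq:CR_cont} \emph{is} the weak formulation, so the whole equivalence collapses to showing that this one triple satisfies the balance, and the ``EDB $\Rightarrow$ weak'' direction costs nothing. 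The paper instead runs that direction from an EDB triple without assuming the form of the fluxes: the chain rule plus $\calL=0$ forces Fenchel equality a.e., and strict convexity/differentiability of $\calR^*(\rho,\cdot)$ then identifies $(f,j)$ as \eqref{eq:CR_cont}. Your route is shorter; the paper's argument buys the extra information that \emph{any} EDB solution automatically carries the fluxes \eqref{eq:CR_cont}, rather than having them imposed.

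One defect needs fixing: your two displayed identities are mutually inconsistent in sign, and as written they do not yield $\calL^{[s,t]}=0$. With the paper's continuity equation $\partial_t\rho=-\divCDiff f+\nablaCReact^*j$, pairing with $\mu=\rmD\calE(\rho)$ gives $\langle\partial_t\rho,\mu\rangle=+\langle\nablaCont\mu,(f,j)\rangle$, not $-\langle\nablaCont\mu,(f,j)\rangle$. Your Fenchel identity $\calR(\rho,f,j)+\calS(\rho)=-\langle\nablaCont\rmD\calE(\rho),(f,j)\rangle$ is the correct one, so combining it with your chain-rule display as written gives $\calE(\rho(t))-\calE(\rho(s))=+\int_s^t\bigl(\calR+\calS\bigr)\dd r$, hence $\calL^{[s,t]}=2\int_s^t\bigl(\calR+\calS\bigr)\dd r\neq 0$. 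Flipping the sign in the second equality of the chain-rule display repairs the argument, and everything else goes through. This is a transcription slip rather than a conceptual gap (the paper's own proof is comparably cavalier about these signs), but since the conclusion is an exact cancellation, the sign bookkeeping is the one place where care is mandatory.
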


The proof is given in Section \ref{sec:ChainRuleCont}.

\subsection{Embedding}
\label{sec:Embedding} 
As a crucial step for obtaining the convergence of gradient systems, we highlighted in Section~\ref{ss:GeneralConvergenceStrategy} the construction of suitable embeddings connecting the prelimit spaces to the limit space.

\begin{definition}[Embedding and discretization operators]\label{def:const_embedding}
    We introduce for each $k\in\bbZ_N^d$ the cube 
    \begin{align*}
        Q^N_k \coloneqq \set{x\in\bbT^d: x_l \in [k_l/N,(k_l+1)/N), l=1,\ldots,d}.
    \end{align*}
    Next, we define the embedding operator $\iota_N:\calM(X_N)\to \calM(X)$ by
    \begin{align*}
        \iota_N (c_i)(x) &\coloneqq \rho_i(x) \coloneqq \sum_{k\in\bbZ_N^d} c_{i,k}\one_{Q^N_k}(x).
    \end{align*} 
Dual to it, we introduce the discretization operator $\iota^*_N:C(X)\to \rmC(X_N)$ 
by setting
\begin{align*}    \iota^*_N\xi_{i,k}\coloneqq(\iota^*_N\xi_i)_k&\coloneqq N^d\int_{Q^N_k} \xi_i\dd x.
\end{align*}
For the diffusive fluxes we introduce $\iota_{N,\diff}:\calM_+(Y_{N,\diff}) 
\to \calM(Y_\diff)$ defined by
\begin{subequations}
    \label{eq:def.jotaFlux}
\begin{equation}
    \label{eq:def.iotaFlux.diff}
         \iota_{N,\diff} F_i(x)\coloneqq f_i(x) = (f_{i,e_1}(x),\ldots,f_{i,e_d}(x))^\top
\end{equation}
where for $ e \in E$ we set
\begin{align}
    \label{eq:def.iotaFlux.diff.1D}
        f_{i,e}(x)= \frac{1}{N}\sum_{k\in\bbZ_N^d}\bigg(\int_0^1\one_{Q^N_{k+\theta e}}(x)\dd \theta\bigg) F_{i,k,e}.
\end{align}
Finally, for the reactive fluxes we define $\iota_{N,\react}:\calM_+(Y_{N,\react})\to \calM(Y_\react)$ by
\begin{equation}
    \label{eq:def.iotaFlux.react}
\iota_{N,\react} J_r(x)\coloneqq j_r(x)\coloneqq\sum_{k\in\bbZ_N^d}\one_{Q^N_k}(x) J_{r,k}.
\end{equation}        
\end{subequations}
\end{definition}
Using the above embedding operator is is clear that $E_N(c) = \mathcal{E}(\iota_N(c))$ and analogous identities hold for other integral functionals as well. 
Moreover, by construction, $ s \mapsto f_{i,e}(x{+}se)$ is piecewise affine, whereas
$s \mapsto f_{i,e}(x {+} s \hat e)$ is piecewise constant for $\hat e \neq e$.  
Moreover, by definition we have for all $i\in I$ and all $e\in\Eu$ the estimate
\begin{align}\label{eq:embedded-flux-bound}
    \int_{\bbT^d}\abs{f_{i,e}}(x)\dd x\le \frac{1}
    {N}\int_0^1\sum_{k\in\bbZ_N^d}\int_{\bbT^d}\one_{Q^N_{j+e\theta}}(x)
    \dd x\dd \theta \abs{F_{i,k,e}} \leq 
    \frac{1}{N^d}\sum_{k\in\bbZ_N^d}\frac{\abs{F_{i,k,e}}}{N},
\end{align}
and for all $r\in R$ the estimate
\begin{align*}
    \int_{\bbT^d}\abs{j_r}(x)\dd x\le \frac{1}{N^d}\sum_{k\in\bbZ_N^d}\abs{J_{r,k}}.
\end{align*}
Denoting the discrete $\rmL^1$-norms on $\rmL^1(\bbZ^d_N)$ by 
$\|G\|_{\rmL^1_N}\coloneqq\frac{1}{N^d}\sum_{k\in\bbZ^d_N}|G_k|$, 
the estimates can be written equivalently as
\[
\forall\, i\in I,e\in \Eu:\quad \|f_{i,e}\|_{\rmL^1}\leq \frac 1 N 
\|F_{i,e}\|_{\rmL^1_N},\quad \forall\, r\in R:\quad \|j_r\|_1\leq \|J_r\|_{\rmL^1_N}.
\]
To simplify notation, for all $N\in\bbN$ and $y\in\R^d$ we introduce the 
\emph{shift-operator} $\shift^N_y$ defined by
\begin{align}\label{eq:shift}
    \shift^N_y:\rmL^1(\bbT^d)\to \rmL^1(\bbT^d),\quad \shift^N_y\phi(x)\coloneqq\phi\Big(\Big(x+\frac y N\Big)\!\!\!\!\!\mod 1\Big),
\end{align}
where the modulus is applied componentwise.

The following lemma uses the embeddings defined above to connect the discrete and continuous continuity equations  (recall $\lCE_N$ from Definition \ref{def:CE_disc}).  

\begin{lemma}
\label{lem:CE.DiscCont}
    For each $c\in\calM_+(X_N)$, $\varphi\in \rmC(X)$, it holds \begin{align*}
        \langle\iota_N c,\varphi\rangle =\langle c,\iota_N^* \varphi\rangle_N.
    \end{align*}
    Furthermore, we have for $\varphi\in \rmC(X)$, $(F,J)\in\calM(Y_N)$ that
    \begin{align*}
        \langle\iota_{N,\diff} F,\nabla\varphi\rangle =\langle F,\nablaDDiff\iota_N^* \varphi\rangle_N \quad \text{and} \quad 
        \langle\iota_{N,\react} J,\nablaCReact\varphi\rangle =\langle J,\nablaDReact\iota_N^* \varphi\rangle_N.
    \end{align*}
    In particular, it holds $(c,F,J)\in\lCE_N$ if and only if $(\iota_N c,\iota_{N,\diff}F,\iota_{N,\react}J) \in\CE$.
\end{lemma}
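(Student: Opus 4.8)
The plan is to reduce all three assertions to a short list of adjointness identities linking the embedding/discretization operators with the gradient maps, and then to read both continuity equations in their weak (distributional) formulations, where these identities transfer one weak form into the other. The scalar duality $\langle\iota_N c,\varphi\rangle = \langle c,\iota_N^*\varphi\rangle_N$ is immediate: inserting $\iota_N c_i = \sum_k c_{i,k}\one_{Q^N_k}$ and interchanging the (finite) sum with the integral yields $\sum_{i,k} c_{i,k}\int_{Q^N_k}\varphi_i\dd x$, and the definition $\iota_N^*\varphi_{i,k}=N^d\int_{Q^N_k}\varphi_i\dd x$ together with the normalisation $1/N^d$ in $\langle\cdot,\cdot\rangle_N$ closes it. The reactive identity $\langle\iota_{N,\react}J,\nablaCReact\varphi\rangle=\langle J,\nablaDReact\iota_N^*\varphi\rangle_N$ is entirely analogous: insert $j_r=\sum_k\one_{Q^N_k}J_{r,k}$, integrate each $\gamma^r_i\varphi_i$ over $Q^N_k$, and recognise $\nablaDReact\iota_N^*\varphi_{r,k}=\gamma^r\bullet(\iota_N^*\varphi)_k$.

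The only identity requiring genuine work is the diffusive one, $\langle\iota_{N,\diff}F,\nabla\varphi\rangle=\langle F,\nablaDDiff\iota_N^*\varphi\rangle_N$, and this is where I expect the main obstacle to lie. After interchanging the finite sums with the integrals, I would reduce it to the per-edge claim
\[
\frac1N\int_0^1\!\!\int_{Q^N_{k+\theta e}}\partial_e\varphi_i\dd x\dd\theta
= \int_{Q^N_{k+e}}\varphi_i\dd x - \int_{Q^N_k}\varphi_i\dd x
\]
for each fixed $i$, $k$, and $e=e_l\in\Eu$, since the right-hand side is exactly $N^{-d}(\nablaDDiff\iota_N^*\varphi)_{i,k,e}$. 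To establish this, I would integrate $\partial_{x_l}\varphi_i$ in the $x_l$-direction over the shifted cube $Q^N_{k+\theta e}$ by the fundamental theorem of calculus, apply Fubini in the transverse variables, and substitute $x_l=(k_l+s)/N$; the two boundary contributions, after integrating over $\theta\in[0,1]$, collapse into the averages of $\varphi_i$ over $Q^N_{k+e}$ and $Q^N_k$. This step is precisely where the piecewise-affine interpolation encoded in the flux embedding \eqref{eq:def.iotaFlux.diff.1D} enters.

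With the three identities at hand, the continuity-equation equivalence follows from the weak formulations. For the implication $(c,F,J)\in\lCE_N\Rightarrow(\iota_N c,\iota_{N,\diff}F,\iota_{N,\react}J)\in\CE$, I would take an arbitrary $\varphi\in\rmC^1(X)$, rewrite $\tfrac{\rmd}{\rmd t}\langle\varphi,\iota_N c\rangle=\tfrac{\rmd}{\rmd t}\langle\iota_N^*\varphi,c\rangle_N$, apply the discrete weak form with the admissible test function $\iota_N^*\varphi\in\rmC(X_N)$, and translate both flux terms back via the adjoint identities to obtain exactly the weak continuum continuity equation. The converse needs one extra ingredient: the discrete equation must hold for \emph{every} $\psi\in\rmC(X_N)$, whereas the continuum equation only supplies test functions of the form $\iota_N^*\varphi$. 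I would close this gap by observing that $\iota_N^*:\rmC^1(X)\to\rmC(X_N)$ is surjective—for each basis element indexed by $(i_0,k_0)$ one picks a smooth bump supported in the interior of $Q^N_{k_0}$ with integral $N^{-d}$ in the $i_0$-th component and zero elsewhere—so that testing the continuum equation against suitable $\varphi$ recovers the full discrete system. Finally I would note that $\iota_N$, $\iota_{N,\diff}$, $\iota_{N,\react}$ are bounded linear operators, the flux bounds being \eqref{eq:embedded-flux-bound}, so they preserve the $\AC$-in-time and $\rmL^1$-in-time regularity required for membership in $\CE$.
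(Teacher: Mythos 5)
Your proposal is correct and follows essentially the same route as the paper: the scalar and reactive dualities by direct computation, the diffusive duality via the fundamental theorem of calculus plus Fubini (your per-edge identity is exactly the paper's manipulation of $(\iota_N^*\varphi)_{k+e}-(\iota_N^*\varphi)_k$ through the shift $\shift^N_{e\theta}$), and the $\lCE_N\Leftrightarrow\CE$ equivalence by transferring weak formulations through these identities. The one point where you go beyond the paper is making explicit the surjectivity of $\iota_N^*$ (via bump functions supported in the cubes $Q^N_k$) needed for the converse implication; the paper asserts the ``if and only if'' without spelling this out, so your addition is a legitimate, if minor, completion of the same argument.
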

\begin{proof}
For the first equality, we calculate
\begin{align*}
    \langle\xi_i,\iota_N c_i\rangle = \int_{\bbT^d} \xi_i \sum_{k\in\bbZ_N^d} c_{i,k}\one_{Q^N_k}\dd x = \frac{1}{N^d}\sum_{k\in\bbZ_N^d} \iota^*_N \xi_{i,k}  c_{i,k} = \langle\iota^*_N(\xi_i), c_i\rangle_N.
\end{align*}
Furthermore, we have for all test functions $\varphi\in \rmC^\infty_0(X)$ that
\begin{align*}
    \langle F,\overline\nabla\iota_N^\ast \varphi \rangle _N
    &= \frac{1}{N^d}\sum_{k\in\bbZ_N^d}\sum_{e\in \Eu} F_{k,e} \bullet[(\iota_N^\ast \varphi)_{k+e}-(\iota_N^\ast \varphi)_k]
    =\sum_{k\in\bbZ_N^d}\sum_{e\in \Eu} F_{k,e}\bullet \int_{Q^N_k} [\shift^N_e\varphi-\varphi]\dd x.
\end{align*}
Rewriting for each $i\in I$ the integral by
\begin{align}\label{eq:HSDI}
    \shift^N_e\varphi_i(x)-\varphi_i(x) &= \int_0^1 \nabla\varphi_i\bigg(x+\frac{e}{N}\theta\bigg) \cdot \frac{e}{N}\dd \theta= \frac1N\int_0^1 \shift^N_{e\theta}\partial_{x_e}\varphi_i(x) \dd \theta,
\end{align}
 and using Fubini for the integrable integrand, we get
\begin{align*}
    &\langle F,\overline\nabla\iota_N^\ast \varphi \rangle_N 
    = \sum_{k\in\bbZ_N^d}\sum_{e\in \Eu} \frac{F_{k,e}}{N} \bullet\int_{Q^N_k}\int_0^1 \shift^N_{e\theta}\partial_{x_e}\varphi(x) \dd \theta\dd x
\\
    & =  \sum_{k\in\bbZ_N^d}\sum_{e\in \Eu} \frac{F_{k,e}}{N} \bullet\int_{\bbT^d}\one_{Q^N_k}(x)\int_0^1 \shift^N_{e\theta}\partial_{x_e}\varphi(x) \dd \theta\dd x
\\   
  &  =  \sum_{k\in\bbZ_N^d}\sum_{e\in \Eu} \frac{F_{k,e}}{N} 
  \bullet\int_{\bbT^d}\int_0^1 \one_{Q^N_{j+e\theta}}(x)
   \partial_{x_e}\varphi(x) \dd \theta\dd x
= \int_{\bbT^d}\sum_{e\in \Eu} f_e(x) \bullet\partial_{x_e}\varphi(x) \dd x =\langle f^N,\nabla \varphi\rangle .
\end{align*}
For the reactive flux, we simply observe that
\begin{align*}
\langle J, \Gamma \iota^*_N \varphi \rangle_N    
    &= \frac{1}{N^d} \sum_{k\in\bbZ_N^d} (\Gamma^TJ)\bullet N^d\int_{Q^N_k} 
    \varphi\dd x 
    = \sum_{k\in\bbZ_N^d} J_k\bullet \int_{Q^N_k} \Gamma \varphi\dd x 
    =\langle j, \Gamma \varphi \rangle .
\end{align*}
In particular, for all $\varphi\in \rmC^\infty(\bbT^d)$ we obtain 
$\langle \dot c, \iota_N^*\varphi\rangle_N = -\langle\divDDiff F, \iota_N^*\varphi \rangle_N + \langle\nablaDReact^\ast J, \iota_N^*\varphi \rangle_N$ 
if and only if
$\langle \partial_t \iota_N c, \varphi\rangle = -\langle \div f, \varphi \rangle + \langle\nablaCReact^\ast j,\varphi \rangle$.

 This finishes the proof of Lemma \ref{lem:CE.DiscCont}.   
\end{proof}

\section{Main results}
\label{sec:MainResults}

Before we state our main results, we fix the assumptions on our problem.

\begin{assumption}[General assumptions]\label{ass:General}
    The continuous reference measure has a density  $\omega\in \rmC(\bbT^d,\R^I)$ and there exists $\omega_*,\ \omega^*$ such that for all $x\in\bbT^d$, $i\in I$ it holds
    \begin{align}
    \label{as_general:cont_reference_bound}
    \tag{3.G1}
        0<   \omega_* \leq \omega_i(x)\leq \omega^* <\infty.
    \end{align}
    The diffusion and reaction coefficients satisfy
    \begin{align}
    \label{as_general:pos_coeff}
    \tag{3.G2}
        \forall\, r\in R:\quad\kappa_r>0
        \qquad\text{and}\qquad
        \forall\, i\in I:\quad\delta_i \geq \delta_*>0.
    \end{align}
\end{assumption}
We emphasize that our analysis carries over without difficulty to diffusion and reaction coefficients that are non-constant in space, but are continuous and uniformly bounded above and away from zero.

For each $N\in\bbN$ we define the discrete reference measure $\discref^N = (\discref^N_i)_{i\in I}\in\calP(X_N)$ by
\begin{align*}
    \discref^N_{i,k} \coloneqq \iota_N^\ast\omega_i = N^d\int_{Q^N_k} \omega_i\dd x.
\end{align*}
We immediately observe that the bounds on the continuous reference measure translate uniformly to all discrete reference measures, i.e., for all $N\in\bbN$, $i\in I$, and $k\in\bbZ^d_N$ it holds
\begin{align}\label{as:reference_bound}
  0<\omega_*\leq   \discref_{i,k}^N\leq\omega^*<\infty.
\end{align}
Moreover, we easily obtain the following convergences
\begin{subequations}   
\begin{align}
    \iota_N \discref^N &\to \omega \text{ strongly in } \rmL^\infty(X)\label{eq:ConvergenceRefMeas},\\
    \forall\, e\in E\  \forall\, i\in I: \quad 
    \shift_e^N(\iota_N w^N_i)&\to \omega_i \text{ strongly in } \rmL^\infty(\bbT^d).
\end{align}
\end{subequations}

The above natural Assumptions \ref{ass:General} as well as their simple consequence are used throughout the paper without always referring to them. 

With this notation and under these general assumptions, we can formulate the energy-dissipation principle for the discrete system.  Here we follow an idea in \cite[Thm.\,4.16]{PRST22} and consider the function 
\[
B(c,v)= \sum_{i\in I}\sum_{k\in \bbZ^d_N} b\big(\tfrac{c_{i,k}}{w^N_{i,k}},
v_{i,k}\big) \quad  \text{with } b(a,s)= 
    \begin{cases} s \log  a &\text{for } a>0, \\
      0 & \text{for } a=0.     \end{cases}
\]
The special treatment of the singularity of $\log c_{i,k}$ at $c_{i,k}$ leads
to nontrivial implications that can only be handled due to the property
that the underlying (discrete) reaction-diffusion system preserves
non-negativity or, even more, positivity. 

\begin{lemma}[Chain rule for the discrete setting]
\label{lem:ChainRule.Discr} Let $c\in \AC([0,T];\rmL^1(X_N))$ be such that 
\[
t \mapsto B(c(t),\dot c(t)) \ \text{ lies in } \rmL^1(0,T).
\]
Then, $t \mapsto E_N(c(t))$ is absolutely continuous and we have the chain rule
formula
\begin{equation}
  \label{eq:CR.Discr.2}
  \frac{\rmd}{\rmd t} E_N(c(t)) = B(c(t),\dot c(t)) \ \text{ for a.a. } t \in
  [0,T]. 
\end{equation}
In particular, every curve $ (c,F,J) \in \lCE$ with $ D_N(c,F,J)< \infty$ satisfies 
\[
E_N(c(t)) - E_N(c(s)) = \int_s^t B(c(r)), \dot c(r)) \dd r \geq 
- \int_s^t \big( R_N(c(r),F(r),J(r)) + S_N(c(r)) \big) \dd r. 
\]
\end{lemma}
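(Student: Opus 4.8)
The plan is to establish the chain rule formula \eqref{eq:CR.Discr.2} by a regularization argument that handles the singularity of $\log$ at the boundary $c_{i,k}=0$, and then derive the energy-dissipation inequality as a corollary. The main structural point is that $E_N$ is a finite sum over the finite index set $I\times\bbZ^d_N$ of terms $\LB(c_{i,k}/w^N_{i,k})\,w^N_{i,k}/N^d$, so the spatial discreteness reduces everything to a one-dimensional statement about the scalar absolutely continuous functions $t\mapsto c_{i,k}(t)$. Since the reference values $w^N_{i,k}$ are fixed positive constants (bounded by \eqref{as:reference_bound}), it suffices to prove that for a single absolutely continuous $a:[0,T]\to[0,\infty)$ with $t\mapsto b(a(t),\dot a(t))=\dot a(t)\log a(t)$ (interpreted as $0$ when $a=0$) lying in $\rmL^1$, the composition $t\mapsto\LB(a(t))$ is absolutely continuous with derivative $\dot a(t)\log a(t)$ a.e.

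\textbf{First}, on the open set $\{t:a(t)>0\}$ the classical chain rule applies directly, since $\LB$ is $\rmC^1$ there and $\LB'(r)=\log r$, giving $\tfrac{\rmd}{\rmd t}\LB(a(t))=\dot a(t)\log a(t)=b(a(t),\dot a(t))$. \textbf{Next}, I would treat the closed set $Z=\{t:a(t)=0\}$: at points of $Z$ that are points of density, the absolute continuity of $a$ forces $\dot a(t)=0$ a.e.\ on $Z$ (a non-negative absolutely continuous function has vanishing derivative a.e.\ on its zero set), so $b(a(t),\dot a(t))=0$ there by the convention, and one checks $\tfrac{\rmd}{\rmd t}\LB(a(t))=0$ a.e.\ on $Z$ as well since $\LB(a)=\LB(0)=1$ is locally constant in the measure-theoretic sense. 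The delicate part is the \emph{absolute continuity} of $t\mapsto\LB(a(t))$ globally, not just a.e.\ differentiability: because $\LB$ is Lipschitz away from $0$ but only continuous at $0$, one must verify that no variation is lost at the transition set $\partial Z$. \textbf{The hard part} is precisely this: showing that the integrability hypothesis $\dot a\,\log a\in\rmL^1$ is exactly what compensates the blow-up of $\log a$ as $a\downarrow 0$, so that $\LB(a(t_2))-\LB(a(t_1))=\int_{t_1}^{t_2}\dot a(r)\log a(r)\dd r$ holds across intervals containing zeros of $a$. I would prove this by approximation, replacing $\log$ with $\log(a+\eps)$ (for which the chain rule is unconditional since $r\mapsto\LB_\eps(r)=(r{+}\eps)\log(r{+}\eps)-r$ is $\rmC^1$ on $[0,\infty)$), and then passing $\eps\downarrow 0$ using dominated convergence justified by the majorant $|\dot a\,\log a|\in\rmL^1$ together with the elementary bound $|\dot a(r)\log(a(r){+}\eps)|\le|\dot a(r)|\,|\log a(r)|+C|\dot a(r)|$ valid on the support of $\dot a$.

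\textbf{Finally}, summing the scalar identity over $i\in I$ and $k\in\bbZ^d_N$ with weights $w^N_{i,k}/N^d$ yields \eqref{eq:CR.Discr.2}, and integrating from $s$ to $t$ gives $E_N(c(t))-E_N(c(s))=\int_s^t B(c(r),\dot c(r))\dd r$. For the concluding inequality I would take any $(c,F,J)\in\lCE_N$ with $D_N(c,F,J)<\infty$; the continuity equation $\dot c=\nablaDisc^\ast(F,J)$ gives $B(c,\dot c)=\langle\rmD E_N(c),\dot c\rangle_N=-\langle\nablaDisc\rmD E_N(c),(F,J)\rangle_N$ on the set where $c>0$, and then the Young--Fenchel inequality between $R_N$ and $R_N^\ast$ together with the identification of the slope $S_N(c)=R_N^\ast(c,-\nablaDisc\rmD E_N(c))$ via the key identity \eqref{eq:CStarandLog} yields
\[
B(c,\dot c)\ge -R_N(c,F,J)-R_N^\ast(c,-\nablaDisc\rmD E_N(c))=-R_N(c,F,J)-S_N(c),
\]
exactly as in the formal computation of Section~\ref{subsec:GradSyst}. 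Integrating this pointwise (in time) inequality over $[s,t]$ and inserting the chain rule identity gives the claimed energy-dissipation inequality. I expect the verification that the integrability hypothesis on $B(c,\dot c)$ is automatically satisfied whenever $D_N(c,F,J)<\infty$ (so that the inequality is non-vacuous) to require the bound $\partial_w\CC(s|w)\le 2|s/w|$ from \eqref{eq:CC.prop.b} controlling the reactive contribution, which is the one place where the cosh-specific structure enters the discrete argument.
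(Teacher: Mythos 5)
Your proof of the chain-rule formula \eqref{eq:CR.Discr.2} is correct and follows essentially the paper's own route: the paper regularizes the logarithm by truncation, $\beta_\eps(c)=\max\{\log\eps,\log c\}$, where you shift it to $\log(c+\eps)$, but both arguments rest on the same two pillars — the unconditional chain rule for the Lipschitz regularization, and dominated convergence with the majorant $|\dot a\,\log a|+C|\dot a|\in\rmL^1$, made available by the observation that $\dot a=0$ a.e.\ on $\{a=0\}$ for a non-negative absolutely continuous function.

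The gap is in the ``in particular'' part. Your Young--Fenchel argument, $B(c,\dot c)=\langle\rmD E_N(c),\dot c\rangle_N=-\langle\nablaDisc\rmD E_N(c),(F,J)\rangle_N\ge -R_N-S_N$, is only justified ``on the set where $c>0$'', but the set of times at which some component $c_{i,k}(t)$ vanishes can have positive measure (a species may be identically zero on an interval), and there $\rmD E_N(c)$ does not exist and $S_N$ is only the \emph{relaxed} slope, i.e.\ the continuous extension of $R^*_N(c,-\nablaDisc\rmD E_N(c))$, not that expression itself. The needed pointwise bound at such times is exactly the content of Proposition~\ref{prop:ChainRule.Discr}(b), estimate \eqref{eq:B.estim.RR*}, which the paper proves under the structural condition ``$c_{i,k}=0\Rightarrow(\nablaDisc^*(F,J))_{i,k}=0$'' (satisfied a.e.\ along curves in $\lCE_N$, by the same vanishing-derivative fact you already use) via an $\eps$-shift $c^\eps=c+\eps$: the inequality holds for $c^\eps$ by duality, the components with $c_{i,k}=0$ contribute nothing to $B$ for any $\eps$, and one passes to the limit using continuity of $R_N$ and $S_N$ in $c$. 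You have all the ingredients for this repair, but the step is missing, and it is precisely the ``nontrivial implication'' of the $\log$-singularity that the paper flags.

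Two smaller inaccuracies in the same part. First, to invoke the chain rule at all you must check that $D_N(c,F,J)<\infty$ implies $B(c,\dot c)\in\rmL^1(0,T)$; this needs the \emph{two-sided} bound $|B(c,\nablaDisc^*(F,J))|\le R_N+S_N$ (obtained by applying the same duality with $+\nablaDisc\rmD E_N(c)$, using evenness of $\C^*$), not only the lower bound you state. Second, your expectation that the estimate $|\partial_w\CC(s|w)|\le 2|s/w|$ from \eqref{eq:CC.prop.b} is required here is misplaced: that bound (Lemma~\ref{lem:PerspFcnLip}) enters the \emph{continuum} chain rule through the commutator estimate of Proposition~\ref{prop:React.Commutator.Estim}; the discrete argument needs only the identity \eqref{eq:CStarandLog} (to recognize $S_N$ as the relaxed slope) together with the $\eps$-shift approximation just described.
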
 

This result will be a consequence of the more detailed Proposition \ref{prop:ChainRule.Discr}. With this chain rule it is then possible to show that 
discrete EDB solutions are equivalent to ODE solutions, i.e., in the discrete setting
the Energy-Dissipoation Principle holds.

\begin{theorem}[Discrete EDB and \eqref{eq:RDSysDisc}]
\label{thm:DiscEDB.ODE}
A function $ c \in \AC([0,T];\calM_+(X_N))$ is a solution to the discrete
reaction-diffusion system \eqref{eq:RDSysDisc} if and only if the triple 
$(c,F,J)$ with $F$ and $J$ given by \eqref{eq:DiffReactFlux} 
is a discrete EDB solution in the sense of Definition \ref{def:sol_disc}. 
\end{theorem}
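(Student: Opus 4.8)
The plan is to read both implications off the abstract energy-dissipation principle of Section~\ref{subsec:GradSyst}, with the discrete chain rule (Lemma~\ref{lem:ChainRule.Discr}) as the analytic engine and the Fenchel equivalence for $\C^\ast$/$\C$ supplying the equality case. Write $\xi\coloneqq-\nablaDisc\rmD E_N(c)$, whose diffusive block is (where $c>0$) $\xi_{i,k,e}=\log(c_{i,k}/w_{i,k})-\log(c_{i,k+e}/w_{i,k+e})$ and whose reactive block is $\zeta_{r,k}=\sum_i(\beta^r_i{-}\alpha^r_i)\log(c_{i,k}/w_{i,k})$. The first step is to record that the fluxes prescribed by~\eqref{eq:DiffReactFlux} are exactly the subdifferential fluxes $\partial_\xi R^\ast_N(c,\xi)$: differentiating $R^\ast_{N,\diff}$ and $R^\ast_{N,\react}$ in $\xi$ yields the summands $N^2\delta_i(c_{i,k}c_{i,k+e})^{1/2}(\C^\ast)'(\xi_{i,k,e})$ and $\kappa_r(c_k^{\alpha^r}c_k^{\beta^r})^{1/2}(\C^\ast)'(\zeta_{r,k})$, which the identity $\sqrt{ab}\,(\C^\ast)'(\log a-\log b)=a-b$ of~\eqref{eq:CStarandLog} collapses precisely to~\eqref{eq:DiffReactFlux}. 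Equivalently, with these fluxes the Young--Fenchel estimate is an equality, $\langle\xi,(F,J)\rangle_N=R^\ast_N(c,\xi)+R_N(c,F,J)=S_N(c)+R_N(c,F,J)$, since $S_N(c)=R^\ast_N(c,\xi)$ by Definition~\ref{def:slope_disc}.

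For ``$c$ solves~\eqref{eq:RDSysDisc} $\Rightarrow$ EDB'', let $c$ solve the ODE and set $(F,J)$ by~\eqref{eq:DiffReactFlux}. Reading off the right-hand side of~\eqref{eq:RDSysDisc} gives $\dot c=-\divDDiff F+\nablaDReact^\ast J=\nablaDisc^\ast(F,J)$; since $(F,J)$ are continuous, hence bounded, functions of the bounded curve $c$, they are integrable in time and $(c,F,J)\in\lCE_N$. The chain rule yields $E_N(c(t))-E_N(c(s))=\int_s^t B(c,\dot c)\dd r$, and via the continuity equation and summation by parts the integrand equals $\langle\rmD E_N(c),\dot c\rangle_N=\langle\rmD E_N(c),\nablaDisc^\ast(F,J)\rangle_N=-\langle\xi,(F,J)\rangle_N$ at a.a.\ $t$. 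Inserting the Fenchel equality of the first paragraph gives $B(c,\dot c)=-\big(S_N(c)+R_N(c,F,J)\big)$ a.e.; in particular $t\mapsto R_N(c,F,J)\in\rmL^1(0,T)$, so $D_N(c,F,J)<\infty$, and integrating over $[s,t]$ produces $L_N^{[s,t]}(c,F,J)=0$. Hence $(c,F,J)$ is a discrete EDB solution.

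For the converse, let $(c,F,J)$ be a discrete EDB solution. Lemma~\ref{lem:ChainRule.Discr} gives simultaneously the chain-rule equality $E_N(c(t))-E_N(c(s))=\int_s^t B(c,\dot c)\dd r$ and the Young--Fenchel bound $B(c,\dot c)\ge-S_N(c)-R_N(c,F,J)$. Consequently $L_N^{[s,t]}(c,F,J)=\int_s^t\big(B(c,\dot c)+S_N(c)+R_N(c,F,J)\big)\dd r$ is the integral of a nonnegative function; the EDB identity $L_N^{[s,t]}=0$ for all $0\le s<t\le T$ therefore forces $B(c,\dot c)=-S_N(c)-R_N(c,F,J)$ for a.a.\ $t$, i.e.\ equality in Young--Fenchel. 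Since $\C^\ast$ is smooth and $c(t)>0$ for a.a.\ $t$, the map $\xi\mapsto R^\ast_N(c,\xi)$ is differentiable, so its subdifferential is single-valued and the equality case forces $(F,J)=\partial_\xi R^\ast_N(c,\xi)$, which by the first paragraph is~\eqref{eq:DiffReactFlux}. Inserting into $\dot c=\nablaDisc^\ast(F,J)$ reproduces~\eqref{eq:RDSysDisc}, so $c$ solves the discrete reaction-diffusion system.

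The technical heart, and where I expect the real difficulty, is the singularity of $\rmD E_N(c)=\log(c/w)$ on the set where some $c_{i,k}$ vanishes, which makes both $\xi$ and the pairing $\langle\xi,(F,J)\rangle_N$ only formal. This is precisely why the chain rule is phrased through the truncated functional $B$ with $b(0,\cdot)=0$ and why $S_N$ is the relaxed slope of Definition~\ref{def:slope_disc}: the $\C^\ast$/log identity~\eqref{eq:CStarandLog} is used to extend both objects across the degenerate set, where they stay finite, exploiting that the reaction-diffusion dynamics preserve nonnegativity. Making the formal chain $\frac{\rmd}{\rmd t}E_N(c)=\langle\rmD E_N(c),\dot c\rangle_N=-\langle\xi,(F,J)\rangle_N$ rigorous, in particular justifying the summation by parts when $\xi$ carries $\log$-singularities, and verifying that the Young--Fenchel equality survives the degeneracy, is the content of the detailed Proposition~\ref{prop:ChainRule.Discr} underlying Lemma~\ref{lem:ChainRule.Discr}. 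Granting that chain rule, the equivalence reduces, as above, to the equality case of the Fenchel inequality, where the smoothness of $\C^\ast$ on $\{c>0\}$ makes the subdifferential single-valued and forces the kinetic relation~\eqref{eq:DiffReactFlux}.
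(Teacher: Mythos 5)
Your overall strategy --- discrete chain rule plus the equality case of the Young--Fenchel inequality, with the identity \eqref{eq:CStarandLog} collapsing the subdifferential fluxes of $R_N^\ast$ to \eqref{eq:DiffReactFlux} --- is exactly the route the paper takes, and the reduction of both implications to ``equality in Young--Fenchel a.e.'' is correct. The gap is in how you resolve that equality case. You write that ``$\C^\ast$ is smooth and $c(t)>0$ for a.a.\ $t$'', so that $\xi\mapsto R^\ast_N(c,\xi)$ has a single-valued subdifferential. The positivity claim is never proved, and it is false in general: a species can vanish identically (for instance one that starts at zero and is produced by no reaction), and the paper itself stresses that some components may vanish identically while others stay positive. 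In the converse direction you cannot appeal to any positivity-preservation property of the dynamics, because at that stage you do not yet know that $c$ solves \eqref{eq:RDSysDisc}. On the set where some $c_{i,k}=0$, the differential $\rmD E_N(c)$ does not exist, $S_N$ is only the \emph{relaxed} slope, and the single-valued-subdifferential argument simply does not apply, so your proof does not recover \eqref{eq:DiffReactFlux} there. (The same issue silently touches your forward direction, since the pointwise Fenchel equality of your first paragraph was also derived only where $c>0$.)

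This degenerate case is not a technicality that your cited engine absorbs: Lemma \ref{lem:ChainRule.Discr} contains only the chain rule and the inequality $B(c,\dot c)\geq -(R_N+S_N)$, not the characterization of equality. The paper closes the hole in Proposition \ref{prop:ChainRule.Discr}(c) by a dedicated argument: decompose species and reactions into vanishing parts $I_\text{v}, R_\text{v}$ and positive parts; finiteness of $R_{N,\react}(c,J)$ forces $J_r=0$ for every $r\in R_\text{v}$ because $\CC(J_r|0)=\chi_0(J_r)$; on the positive part convex duality yields the flux formula; and the residual equality additionally forces $c^{\alpha^r}=c^{\beta^r}=0$ for $r\in R_\text{v}$, so that \eqref{eq:DiffReactFlux} holds (with both sides zero) on the degenerate set as well. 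Your closing paragraph defers to Proposition \ref{prop:ChainRule.Discr} only for the rigor of the chain rule, and then handles the equality case once more via positivity; to make the proof complete you must either invoke part (c) of that proposition explicitly or reproduce its vanishing/positive decomposition.
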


We refer to the end of Section \ref{ss:Proof.Disc.EDP} for the proof. 
\bigskip

We now turn to the continuum system, where we need to restrict the stoichiometric 
vectors $\alpha^r$ and $\beta^r$, which was not the case in the discrete setting. 
At the end of this section we will shortly address the case where we have  a priori 
bounds in $\rmL^\infty$, which is again a case, where arbitrary stoichiometric 
vectors are allowed. 

In our analysis we will use two levels of assumptions: the first is needed for
deriving the lower-limit estimates and the second, which  is slightly stronger, will be used to derive the abstract chain rule.

\begin{assumption}[for lower-limit estimates]
The reaction coefficients satisfy
\begin{align}\tag{4.A1}
  \forall\, r\in R: \quad \frac12\big| \alpha^r+ \beta^r\big|_1  
     \leq {  \pcrit  \coloneqq 1+ 2/d. } 
\label{ass:Reactions1}  
\end{align}
\end{assumption}

\begin{assumption}[for chain rule inequality]
The reaction coefficients satisfy  
\begin{align}\tag{4.A2}
    \forall\, r\in R:\quad &|\alpha^r|_1\leq \pcrit, \quad |\beta^r|_1\leq \pcrit, \quad \frac12|\alpha^r{+}\beta^r|_1 \lneqq \pcrit. 
\label{ass:Reactions2*}
\end{align}
\end{assumption}

\begin{example}
 In all space dimensions we have $\pcrit>1$. Hence, our analysis covers 
linear exchange reactions
\[
\rmX_1 \rightleftharpoons \rmX_2 \quad \text{where } \ |\alpha|_1=|\beta|_1=\frac12|\alpha{+}\beta|_1=1.
\]
In space dimensions $d\leq 2$ we have $\pcrit=3$ or $\pcrit=2$, which allows to 
handle binary reactions with $j=c_1c_2-c_3$, i.e.,
\[
\rmX_1{+}\rmX_2 \rightleftharpoons \rmX_3 \quad \text{where } \ |\alpha|_1=2, \ \ |\beta|_1=1,\ \ 
\frac12|\alpha{+}\beta|_1=3/2,
\]
or the semi-conductor reaction with $j = c_\mathrm{neq}c_\mathrm{pos}-1$, 
i.e.,
\[
\rmX_\mathrm{neg} + \rmX_\mathrm{pos} \rightleftharpoons \emptyset\quad 
\text{where }|\alpha|_1=2, \ \ |\beta|_1=0,\ \ \frac12|\alpha{+}\beta|_1=1.
\]
\end{example}

Having fixed the assumptions, we now state the convergence of the discrete gradient systems to the continuum gradient system.

\begin{theorem}[Convergence and lower limit of energy-dissipation functionals]
\label{thm:Convergence} 
Consider \ \ \ \linebreak[3] 
$(c^N,F^N,J^N)\in\overline{\CE}_N$ such that the uniform bounds
$\sup_{N\in\N}\esssup_{t\in[0,T]}E_N(c^N(t)) 
<\infty$ and $\sup_{N\in\N}D_N(c^N,F^N,J^N) <\infty$ hold true.
Moreover, assume that the reactions satisfy \eqref{ass:Reactions1}.

Then, there exists $(\rho,f,j)\in\CE$ with $f\in\rmL^1([0,T]\times Y_\diff)$ and
$j\in\rmL^1([0,T]\times Y_\react)$ such that (up to a subsequence) we have 
$\iota_Nc^N\to \rho$ strongly in $\rmL^1([0,T]\times X)$, 
 $\iota_{N,\diff} F^N\rightharpoonup f$ weakly in 
$\rmL([0,T]\times Y_\diff)$, and $\iota_{N,\react}J^N 
\rightharpoonup j$ weakly in $\rmL^1([0,T]\times Y_\react)$.

Moreover, we have the lower limit inequalities
\[
    \liminf_{N\to\infty} D_N(c^N,F^N,J^N)\geq \calD(\rho,f,j) \quad 
    \text{and }\quad \liminf_{N\to\infty} E_N(c^N(t))\geq \calE(\rho(t))
     \text{ for all } t\in[0,T],
\]
for the functionals defined in \eqref{eq:discrete_energy}, \eqref{eq:DiscreteD},
\eqref{eq:torus_energy}, and \eqref{eq:ContinuousD}, respectively.

In particular, for well-prepared initial data, i.e.,
$\iota_N c^N(0)\to \rho(0)$ with 
$E_N(c^N(0)\to \calE(\rho(0))$, it holds
\begin{align*}
    \liminf_{N\to\infty} L_N(c^N,F^N,J^N)\geq \calL(\rho,f,j).
\end{align*} 
\end{theorem}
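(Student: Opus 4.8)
The plan is to run the abstract program of Section~\ref{ss:GeneralConvergenceStrategy}: establish compactness, prove the two lower-limit estimates, then combine them. For compactness, the uniform energy bound together with $E_N(c)=\calE(\iota_N c)$ and the superlinearity of $\LB$ yields a uniform $\rmL\log\rmL$ bound on $\iota_N c^N(t)$, hence equi-integrability in $x$ and a uniform $\esssup_t\rmL^1_x$ bound. The diffusive slope $S_{N,\diff}$ encodes discrete $\rmH^1$ control of $\sqrt{\iota_N c^N/\iota_N w^N}$, and a parabolic Gagliardo--Nirenberg interpolation of this (time-integrated) bound with the $\rmL^1$ bound yields $\iota_N c^N$ bounded in $\rmL^{\pcrit}([0,T]\times X)$. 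The continuity equation $\dot c=\nablaDisc^\ast(F,J)$ with the $\rmL^1$ flux bounds \eqref{eq:embedded-flux-bound} gives equicontinuity in time, so an Aubin--Lions-type argument yields strong $\rmL^1([0,T]\times X)$ convergence $\iota_N c^N\to\rho$ along a subsequence. For the fluxes, $R_N$ controls $\CC$ of the fluxes against the mass terms; since these masses are bounded in $\rmL^1$ (using the $\rmL^{\pcrit}$ bound and \eqref{ass:Reactions1} for the reactive part) and $\CC(\cdot|w)$ is superlinear, Lemma~\ref{lem:Superlinear} gives uniform integrability, hence weak $\rmL^1$ compactness of $\iota_{N,\diff}F^N$ and $\iota_{N,\react}J^N$ by Dunford--Pettis. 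Lemma~\ref{lem:CE.DiscCont} then places $(\rho,f,j)$ in $\CE$.

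\textbf{Lower limits.} The energy estimate is immediate: $E_N(c^N(t))=\calE(\iota_N c^N(t))$, $\iota_N c^N(t)\to\rho(t)$, and lower semicontinuity of $\calE$ give $\liminf_N E_N(c^N(t))\ge\calE(\rho(t))$ for all $t$. For the dissipation I treat $R_N$ and $S_N$ separately. The primal dissipations are integral functionals of the jointly convex lsc perspective function $\CC$; combining weak $\rmL^1$ convergence of the fluxes with \emph{strong} convergence of the mass terms $(\iota_N c^N)^{(\alpha^r+\beta^r)/2}$ (from the strong $\rmL^1$ and $\rmL^{\pcrit}$ bounds and the reference convergence \eqref{eq:ConvergenceRefMeas}), an Ioffe-type lower-semicontinuity theorem gives $\liminf_N\int_0^T R_N\ge\int_0^T\calR$. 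The slope terms follow analogously, the diffusive slope being convex in $\nabla\sqrt{\rho/\omega}$ and the reactive slope inheriting lower semicontinuity from the strong convergence of $(\rho/\omega)^{\alpha^r/2}$ and \eqref{eq:ConvergenceRefMeas}. Hence $\liminf_N D_N\ge\calD(\rho,f,j)$.

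\textbf{Conclusion.} Writing $L_N=E_N(c^N(T))-E_N(c^N(0))+D_N$, the well-preparedness $E_N(c^N(0))\to\calE(\rho(0))$ lets me separate the middle term, and superadditivity of $\liminf$ (valid as all remaining terms are nonnegative and uniformly bounded) gives
\[
\liminf_N L_N\ge \liminf_N E_N(c^N(T))+\liminf_N D_N-\calE(\rho(0))\ge\calE(\rho(T))+\calD(\rho,f,j)-\calE(\rho(0))=\calL(\rho,f,j).
\]

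\textbf{Main obstacle.} The decisive difficulty is the reactive part. Both the weak $\rmL^1$ compactness of $\iota_{N,\react}J^N$ and the lower semicontinuity of $\calR_\react$ and $\calS_\react$ hinge on $\pcrit$-uniform integrability of the embedded concentrations, which is exactly what the Gagliardo--Nirenberg interpolation of the energy-derived $\rmL^1$ bound with the slope-derived $\rmH^1$ bound on $\sqrt{\rho}$ provides; Assumption~\eqref{ass:Reactions1} is precisely the threshold making the reactive mass $(c)^{(\alpha^r+\beta^r)/2}$ integrable. As the reaction dissipation is \emph{nonconvex}, convexity of the integrand in the concentration cannot be invoked; instead the superlinearity of $\CC$ (Lemma~\ref{lem:Superlinear}) is what closes the uniform-integrability argument. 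A likely extra subtlety is that the piecewise-constant embedding $\iota_N$ may not itself yield enough integrability of the reactive masses, so one probably needs a more regular second family of embeddings to secure $\pcrit$-uniform integrability while keeping $\iota_N$ for the liminf estimate.
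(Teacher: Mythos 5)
Your overall architecture coincides with the paper's (embedding, Gagliardo--Nirenberg interpolation, Aubin--Lions compactness, superlinear-integrand equi-integrability for the fluxes, Ioffe/convexity liminf arguments, and the final superadditivity step), but two of your key steps fail as written.

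First, the equi-integrability of the reactive fluxes. You argue: the masses $\sigma^N_r=\kappa_r(c^N)^{(\alpha^r+\beta^r)/2}$ are bounded in $\rmL^1$ (by the $\rmL^{\pcrit}$ bound and \eqref{ass:Reactions1}), $\CC(\cdot|w)$ is superlinear, hence Lemma~\ref{lem:Superlinear} ``closes the uniform-integrability argument.'' It does not: Lemma~\ref{lem:Superlinear} requires a \emph{superlinear} function $\psi$ of the mass variable, and an $\rmL^1$ bound corresponds only to $\psi(w)=w$. With linear $\psi$ the infimal convolution $\Xi_{\C,\psi}$ is itself at most linear --- take $w=|s|$ in the infimum to get $\Xi(s)\le(\C(1)+1)\,|s|$ --- so the de la Vall\'ee Poussin criterion yields nothing. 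Since \eqref{ass:Reactions1} permits the borderline equality $\tfrac12|\alpha^r{+}\beta^r|_1=\pcrit$, an $\rmL^{\pcrit}$ bound on $c^N$ (which is all that interpolating the energy-derived $\rmL^1$ bound with the $\rmH^1$ bound gives, i.e.\ \eqref{eq:HighInte.pcrit.a}) is genuinely insufficient. The paper closes this gap with the logarithmically improved bound \eqref{eq:HighInte.pcrit.b} of Proposition~\ref{prop:integrability}, obtained by interpolating the $\rmH^1$ bound against the \emph{entropy} ($\rmL\log\rmL$) bound rather than the plain $\rmL^1$ bound; this places the masses in the Orlicz class of the superlinear $\psi_d(w)=w\,(\log(1{+}w^{1/\pcrit}))^{1/d}$, and only then does Lemma~\ref{lem:Superlinear} apply. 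You correctly guessed that the smoother second embedding $\tilde\iota_N$ is needed, but assigned it the wrong role: it is needed to run this sharpened interpolation, not merely to reach $\rmL^{\pcrit}$.

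Second, the liminf for the diffusive rate. You propose to treat $R_{N,\diff}$ by an Ioffe-type theorem for the fixed jointly convex integrand $\CC$. But in the embedded variables the integrand is $N$-dependent: with $f\sim F/N$ and the $N^2$ in the mass term, joint $1$-homogeneity gives $\CC(F\,|\,N^2\sigma)=N\,\CC(f\,|\,N\sigma)$, which only converges to the quadratic $|f|^2/(2\sigma)$ as $N\to\infty$; the prelimit functionals are cosh-type while the limit $\calR_\diff$ is quadratic, so there is no single convex integrand to which lower semicontinuity can be applied. The paper handles exactly this structural change by dualization: Lemma~\ref{lem:EstimateDualDissipationPotential} (based on $N^2\C^*(r/N)\le\tfrac{r^2}{2}\cosh(r/N)$) shows that the discrete cosh dual potentials, evaluated at discretized smooth test functions, converge from above to the quadratic dual potential, and then the Fenchel--Young inequality plus a supremum over test functions recovers $\calR_\diff$. (One could alternatively try to exploit the monotonicity \eqref{eq:CC.prop.f} of $N\mapsto\CC(Nf\,|\,N^2\sigma)$, but some such device is indispensable; plain Ioffe is not enough.) Your Ioffe argument is correct for the reactive rate, where $R_{N,\react}(c^N,J^N)=\calR_\react(\iota_Nc^N,\iota_{N,\react}J^N)$ holds exactly, and for the slopes once the weak $\rmL^2$ convergence of the discrete gradients (Proposition~\ref{prop:convergence_of_derivatives}) is established. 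A last, smaller point: the energy liminf is asserted for \emph{all} $t\in[0,T]$, which requires the pointwise-in-time weak-$^\ast$ convergence $\rho^N(t)\rightharpoonup^\ast\rho(t)$ from Proposition~\ref{prop:strong_compactness}; strong $\rmL^1$ convergence in space-time gives it only for a.e.\ $t$.
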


To conclude that the limit $(\rho,f,j)$ solves the limit gradient-flow equation, we need in addition a chain rule inequality for the continuous reaction-diffusion 
system. (Recall $\calD$ and $\calL$ from Definition \ref{def:EDF_cont}.)

\begin{theorem}[Chain rule inequality for continuum system]
\label{thm:CRContinuous}
Consider a curve $(\rho,f,j)\in\CE$ with
$\esssup_{t\in[0,T]}\calE(\rho(t)) <\infty$ and $\calD(\rho,f,j) <\infty$. 
In addition, assume that the reaction coefficients satisfy \eqref{ass:Reactions2*}. 

Then, for every $0\le s < t\le T$ it holds
\begin{align*}
    \calE(\rho(t)) - \calE(\rho(s)) + \int_s^t \calR(\rho(\tau),f(\tau),j(\tau)) + \calS(\rho(\tau)) \dd \tau \ge 0.
\end{align*}
Furthermore, it holds  $\calL(\rho,f,j) = 0$ if and only if $\rho$ is 
a continuum EDB solution of \eqref{eq:RDSysCont} in the sense of Definition~\ref{def:sol_cont}.  
\end{theorem}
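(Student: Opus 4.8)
The plan is to make rigorous the formal computation from Section~\ref{subsec:GradSyst}, namely $\frac{\rmd}{\rmd t}\calE(\rho)=\langle\nablaCont\rmD\calE(\rho),(f,j)\rangle\ge-\calR(\rho,f,j)-\calR^\ast(\rho,-\nablaCont\rmD\calE(\rho))$ with $\calR^\ast(\rho,-\nablaCont\rmD\calE(\rho))=\calS(\rho)$, by a smoothing argument. First I would read off the regularity encoded in the finiteness of $\calD$: the bound $\int_0^T\calS_\diff(\rho)\,\dd t<\infty$ gives $\sqrt{\rho_i/\omega_i}\in\rmL^2(0,T;\rmH^1(\bbT^d))$, and combined with $\esssup_t\calE(\rho(t))<\infty$ a parabolic Gagliardo--Nirenberg interpolation yields $\rho_i\in\rmL^{\pcrit}((0,T){\times}\bbT^d)$; finiteness of $\calR_\diff$ gives $f_i\ll\scrL^d$ with $f_i/\sqrt{\rho_i}\in\rmL^2$, and finiteness of $\calR_\react$ gives $j_r\ll\scrL^d$ with $\CC(j_r|w_r)\in\rmL^1$, where $w_r:=\kappa_r\rho^{(\alpha^r+\beta^r)/2}$. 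Since \eqref{ass:Reactions2*} imposes $|\alpha^r|_1,|\beta^r|_1\le\pcrit$, generalized Hölder makes $\rho^{\alpha^r},\rho^{\beta^r}\in\rmL^1$, so all integrands below are well defined.

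Next I would mollify in space, $\rho^\eps:=\rho*\phi_\eps$, $f^\eps:=f*\phi_\eps$, $j^\eps:=j*\phi_\eps$ with a radially decreasing mollifier, and add $\eta>0$ to set $\rho^{\eps,\eta}:=\rho^\eps+\eta$. As $\nablaCont^\ast$ has constant coefficients, convolution preserves $\CE$; for fixed $\eps$ the density $\rho^{\eps,\eta}$ is smooth, bounded, and bounded below by $\eta$, so that $\log(\rho_i^{\eps,\eta}/\omega_i)$ is bounded and $t\mapsto\calE(\rho^{\eps,\eta}(t))$ is absolutely continuous. The classical chain rule and integration by parts on the torus then give the exact identity
\[
\calE(\rho^{\eps,\eta}(t))-\calE(\rho^{\eps,\eta}(s))=\int_s^t\Big(\sum_i\int_{\bbT^d} f_i^\eps\cdot\nabla\log\tfrac{\rho_i^{\eps,\eta}}{\omega_i}+\sum_r\int_{\bbT^d}\big(\gamma^r\bullet\log\tfrac{\rho^{\eps,\eta}}{\omega}\big)\,j_r^\eps\Big)\dd\tau.
\]
Applying the weighted Fenchel--Young inequality summand-wise, with the pair $(\CC(\cdot|w),\,w\C^\ast)$ and weight $w=\kappa_r(\rho^{\eps,\eta})^{(\alpha^r+\beta^r)/2}$ on the reactive part, and invoking \eqref{eq:CStarandLog} (which turns $w\,\C^\ast(\gamma^r\bullet\log(\rho^{\eps,\eta}/\omega))$ into exactly the integrand of $\calS_\react(\rho^{\eps,\eta})$), yields $\calE(\rho^{\eps,\eta}(t))-\calE(\rho^{\eps,\eta}(s))\ge-\int_s^t\big(\calR(\rho^{\eps,\eta},f^\eps,j^\eps)+\calS(\rho^{\eps,\eta})\big)\dd\tau$.

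The heart of the proof is the passage $\eps,\eta\to0$. The energy terms converge by lower semicontinuity together with the $\rmL^{\pcrit}$-control, and the diffusive contributions, being jointly convex in $(\rho,f)$ and controlled by $\sqrt{\rho_i/\omega_i}\in\rmL^2\rmH^1$, pass to the limit by the standard relative-entropy (Fokker--Planck) chain rule. The difficulty is entirely reactive, since $\gamma^r\bullet\log(\rho^{\eps,\eta}/\omega)$ is unbounded and $\calR_\react$ is non-convex jointly in $(\rho,j)$. To obtain the required uniform integrability I would combine three ingredients. The magical estimate \eqref{eq:CC.prop.d}, applied with an exponent $q>1$ satisfying $\tfrac{q}{2}|\alpha^r{+}\beta^r|_1\le\pcrit$ --- admissible precisely because \eqref{ass:Reactions2*} asks the \emph{strict} inequality $\tfrac12|\alpha^r{+}\beta^r|_1<\pcrit$ --- gives $\C(j_r)\le\tfrac{q}{q-1}\CC(j_r|w_r)+\tfrac{4w_r^q}{q-1}\in\rmL^1$, so that $j_r\in\rmL\log\rmL$ and, by Jensen for the convex $\C$, the family $\{\C(j_r^\eps)\}$ is uniformly integrable. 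The Hardy--Littlewood maximal operator $M$ provides, via the pointwise bound $\rho^\eps\le C\,M\rho$ and the boundedness of $M$ on $\rmL^{\pcrit}$ (as $\pcrit>1$), the $\eps$-independent integrable majorant $(M\rho)^{(\alpha^r+\beta^r)/2}$ for the slope weights, hence for $w\,\C^\ast(\gamma^r\bullet\log(\rho^{\eps,\eta}/\omega))$. Finally, the estimate $|\partial_w\CC(s|w)|\le2|s/w|$ from \eqref{eq:CC.prop.b}, via $|\CC(s|w_1)-\CC(s|w_2)|\le2|s|\,|\log(w_1/w_2)|$, bridges the mismatch between the natural slope weight $\kappa_r(\rho^{\eps,\eta})^{(\alpha^r+\beta^r)/2}$ and the convolution weight $w_r*\phi_\eps$, for which joint convexity \eqref{eq:CC.prop.a} and Jensen give the dominating bound $\CC(j_r^\eps|w_r*\phi_\eps)\le(\CC(j_r|w_r))*\phi_\eps$. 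Together these produce a uniform integrable majorant for the reactive pairing $j_r^\eps\,\gamma^r\bullet\log(\rho^{\eps,\eta}/\omega)$; Vitali's theorem then passes the reactive term to the limit and the lower semicontinuity of $\calR,\calS$ completes the inequality $\calE(\rho(t))-\calE(\rho(s))+\int_s^t(\calR+\calS)\ge0$. I expect this weight-reconciliation step --- matching the slope weight with the convolution weight while retaining uniform integrability --- to be the main obstacle.

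For the equivalence, I would use the additivity $\calL^{[0,T]}=\calL^{[0,s]}+\calL^{[s,t]}+\calL^{[t,T]}$, which follows from telescoping the energy differences. The chain-rule inequality just proved gives $\calL^{[s,t]}\ge0$ on every subinterval, so $\calL(\rho,f,j)=\calL^{[0,T]}=0$ forces every summand to vanish; in particular $\calL^{[s,t]}=0$ for all $0\le s<t\le T$, which together with the standing bounds $\esssup_t\calE(\rho(t))<\infty$ and $\calD(\rho,f,j)<\infty$ is exactly Definition~\ref{def:sol_cont} of a continuum EDB solution. The converse is immediate from that definition.
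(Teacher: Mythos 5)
Your overall strategy is exactly the paper's: shift the density by a constant, mollify, prove the chain rule for the regularized curve via the classical chain rule and Fenchel--Young, then pass to the limit, with energy and diffusive terms handled by convexity/monotonicity, the reactive slope by integrable majorants (the paper instead uses Nemitskii continuity in $\rmL^{\pcrit}$, a cosmetic difference), and the reactive rate as the critical term attacked with the same three tools --- the magical estimate \eqref{eq:CC.prop.d} (with $q>1$ available precisely because of the strict inequality in \eqref{ass:Reactions2*}), the Hardy--Littlewood maximal function, and the bound $|\partial_w \CC(s|w)|\le 2|s|/w$ from \eqref{eq:CC.prop.b} combined with Jensen for the jointly convex $\CC$. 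This is the content of the paper's commutator estimate, Proposition~\ref{prop:React.Commutator.Estim}; your treatment of the ``furthermore'' part via additivity of $\calL^{[s,t]}$ over subintervals is also the intended argument.

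However, the execution of the weight-reconciliation step has a genuine gap. A minor point first: you convolve the \emph{unshifted} weight $w_r=\kappa_r\rho^{(\alpha^r+\beta^r)/2}$, which has no positive lower bound, so the log-ratio estimate $|\CC(s|w_1)-\CC(s|w_2)|\le 2|s|\,|\log(w_1/w_2)|$ degenerates wherever $\rho$ is small; this is repaired by convolving the shifted weight $\kappa_r(\rho+\eta)^{(\alpha^r+\beta^r)/2}$, so that both weights are bounded below by $\kappa_r\eta^{|\alpha^r+\beta^r|_1/2}$, and using the monotonicity \eqref{eq:CC.prop.b} afterwards. The substantive gap is in the claimed majorant/Vitali argument: after the repair, your error term is (up to a constant depending on $\eta$) the product $|j_r^\eps|\cdot|w_1^\eps-w_2^\eps|$, where $j_r^\eps$ is only uniformly integrable in $\rmL^1$ (or pointwise dominated by $\rmM j_r\in\rmL^1$), while the weight mismatch $w_1^\eps-w_2^\eps$ converges to $0$ a.e.\ and is dominated only by an $\rmL^q$ function built from $(\rmM\rho_i)^{(\alpha^r+\beta^r)/2}$, with $q=\pcrit/\tfrac12|\alpha^r{+}\beta^r|_1>1$. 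A product of an $\rmL^1$-u.i.\ family with a family that is merely $\rmL^q$-dominated and a.e.\ null-convergent need not be uniformly integrable and need not tend to $0$ in $\rmL^1$: taking $f_\eps=a_\eps\one_{E_\eps}$ with $a_\eps|E_\eps|\to 0$ and $E_\eps$ concentrated where the dominating $\rmL^q$ function blows up produces $\int f_\eps\,|w_1^\eps{-}w_2^\eps| \to\infty$. So Vitali does not apply as stated. The paper closes exactly this hole by \emph{truncating} the weights at a level $M$: comparing $\CC(j_r^\eps\,|\,M\wedge a_\eps)$ with $\CC(j_r^\eps\,|\,M\wedge b_{\eps,M})$, $b_{\eps,M}=(a\wedge M)*k_\eps$, the weight mismatch is bounded by $2M$, the error is dominated by $C\,M\,\rmM j_r\in\rmL^1$, dominated convergence handles $\eps\to 0$ at fixed $M$, and only afterwards is $M\to\infty$ taken, using $\CC(j_r|a\wedge M)\le\max\{\CC(j_r|1),\CC(j_r|a)\}\in\rmL^1$. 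With this truncation inserted, your Jensen-based uniform integrability of $\C(j_r^\eps)$ would in fact suffice in place of Stein's theorem (a u.i.\ family times a uniformly bounded, a.e.\ vanishing family converges to $0$ in $\rmL^1$ by Vitali); without it, the limit passage for the reactive rate does not go through.
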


The three theorems together imply that solutions of the discrete problems \eqref{eq:RDSysDisc} on $\bbZ^d_N$ starting from well-prepared initial data converge 
(after choosing a suitable subsequence) to solutions of the continuous reaction-diffusion system \eqref{eq:RDSysCont}. This is summarized in our final main result.

\begin{corollary}[Convergence of solutions]\label{cor:conv_sol}
    Assume that the reactions satisfy \eqref{ass:Reactions2*}.
    Let $\rho_0\in \rmL^1(X)$ satisfy $\calE(\rho_0)<\infty$. Let $(c_0^N)_{N\in\bbN}$ with $c^N_0\in\rmL^1(X_N)$ be well-prepared, i.e., let $\iota_Nc^N_0\to \rho_0$ in $\rmL^1(X)$ and $E_N(c^N_0)\to \calE(\rho_0)$ as $N\to\infty$.
    
    Then, for each $N\in\bbN$ there exists a solution $c^N\in\rmL^1(0,T;\calM_+(X_N))$ of \eqref{eq:RDSysDisc} on $\bbZ^d_N$ in the sense of Definition~\ref{def:sol_disc} with initial datum $c_0^N$. 
    
    Furthermore, (up to a subsequence) we have $\iota_N c^N\to\rho$ strongly in $\rmL^1([0,T]\times X)$, where $\rho$ is a solution of the gradient flow equation \eqref{eq:RDSysCont} on $\bbT^d$ in the sense of Definition~\ref{def:sol_cont} with initial datum $\rho_0$. 
\end{corollary}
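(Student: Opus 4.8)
The plan is to assemble \textbf{Corollary~\ref{cor:conv_sol}} directly from the three main theorems by a standard gradient-system convergence argument, the abstract skeleton of which is already laid out in Section~\ref{ss:GeneralConvergenceStrategy}. First I would dispose of the existence of discrete solutions. For each fixed $N\in\bbN$, the system \eqref{eq:RDSysDisc} is a finite-dimensional, locally Lipschitz (away from the boundary $\{c_{i,k}=0\}$) ODE, so a unique local solution exists; the relative entropy $E_N$ is a strict Lyapunov function (it is non-increasing along trajectories by Theorem~\ref{thm:DiscEDB.ODE}), which furnishes an a priori bound $E_N(c^N(t))\le E_N(c^N_0)$ for all $t$, and the associated $\rmL^1$-mass control plus positivity-preservation of the cosh structure gives global existence in $\AC([0,T];\calM_+(X_N))$. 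By Theorem~\ref{thm:DiscEDB.ODE} this ODE solution is equivalently a discrete EDB solution, so there exist fluxes $(F^N,J^N)$ with $(c^N,F^N,J^N)\in\lCE_N$ and $L_N(c^N,F^N,J^N)=0$.

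The second step is to verify the uniform a priori bounds that feed Theorem~\ref{thm:Convergence}. From $L_N(c^N,F^N,J^N)=0$ we read off the energy--dissipation balance
\[
E_N(c^N(t)) + \int_0^t \big(R_N(c^N,F^N,J^N)+S_N(c^N)\big)\dd r = E_N(c^N_0),
\]
valid for all $t\in[0,T]$. Since the dissipation integrand is non-negative, this yields simultaneously $\esssup_{t\in[0,T]}E_N(c^N(t))\le E_N(c^N_0)$ and $D_N(c^N,F^N,J^N)\le E_N(c^N_0)$. The well-preparedness hypothesis $E_N(c^N_0)\to\calE(\rho_0)<\infty$ makes $E_N(c^N_0)$ a bounded sequence, so both suprema over $N$ are finite. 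Because \eqref{ass:Reactions2*} implies \eqref{ass:Reactions1}, all hypotheses of Theorem~\ref{thm:Convergence} are met, and we extract a (sub)sequence along which $\iota_N c^N\to\rho$ strongly in $\rmL^1([0,T]\times X)$, $\iota_{N,\diff}F^N\rightharpoonup f$, and $\iota_{N,\react}J^N\rightharpoonup j$ weakly, with limit $(\rho,f,j)\in\CE$, $\esssup_t\calE(\rho(t))<\infty$, and $\calD(\rho,f,j)<\infty$.

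The third step closes the variational argument exactly as in Section~\ref{ss:GeneralConvergenceStrategy}. Using the second lower-limit inequality of Theorem~\ref{thm:Convergence} at $t=0$ together with well-preparedness $\iota_Nc^N_0\to\rho_0$ gives $\calE(\rho(0))\le\liminf_N E_N(c^N_0)=\calE(\rho_0)$; conversely lower semicontinuity of $\calE$ under the strong $\rmL^1$-convergence at $t=0$ gives $\calE(\rho_0)\le\calE(\rho(0))$, so $\rho(0)=\rho_0$ (identifying the initial trace) and the energies match. Then the liminf estimate for $L_N$ stated in Theorem~\ref{thm:Convergence} combined with $L_N(c^N,F^N,J^N)=0$ yields
\[
0=\liminf_{N\to\infty}L_N(c^N,F^N,J^N)\ge \calL(\rho,f,j).
\]
On the other hand, since \eqref{ass:Reactions2*} holds, Theorem~\ref{thm:CRContinuous} supplies the chain-rule inequality $\calL(\rho,f,j)\ge 0$. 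Hence $\calL(\rho,f,j)=0$, and the second assertion of Theorem~\ref{thm:CRContinuous} identifies $\rho$ as a continuum EDB solution of \eqref{eq:RDSysCont} with initial datum $\rho_0$, which is the claim.

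\emph{Main obstacle.} The genuinely delicate point is not the abstract chaining — that is essentially bookkeeping once the three theorems are in hand — but ensuring the hypotheses of Theorem~\ref{thm:Convergence} and Theorem~\ref{thm:CRContinuous} are met \emph{for the specific solution sequence}. Concretely, one must confirm that the discrete EDB solutions obtained in step one actually satisfy the \emph{uniform-in-$N$} bounds, which hinges on the well-preparedness convergence $E_N(c^N_0)\to\calE(\rho_0)$ controlling the initial energies uniformly; and one must be careful that the limiting triple $(\rho,f,j)$ inherits the correct initial datum, since the continuity-equation convergence only controls $\rho$ on the open interval and the identification $\rho(0)=\rho_0$ requires the trace/energy-convergence argument above rather than a naive pointwise limit. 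A secondary subtlety is that the assumption actually invoked is the stronger \eqref{ass:Reactions2*} (needed for the chain rule), whereas Theorem~\ref{thm:Convergence} only needs \eqref{ass:Reactions1}; one should remark that \eqref{ass:Reactions2*} implies \eqref{ass:Reactions1} so both theorems apply simultaneously.
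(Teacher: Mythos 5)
Your overall architecture is exactly the paper's: the corollary has no separate proof in the paper but is assembled from Theorem~\ref{thm:DiscEDB.ODE} (existence/equivalence of discrete EDB solutions), Theorem~\ref{thm:Convergence} (compactness and liminf), and Theorem~\ref{thm:CRContinuous} (chain rule), following the scheme of Section~\ref{ss:GeneralConvergenceStrategy}. Your derivation of the uniform bounds from $L_N^{[0,t]}(c^N,F^N,J^N)=0$ and well-preparedness, the observation that \eqref{ass:Reactions2*} implies \eqref{ass:Reactions1}, and the closing argument $0=\liminf_N L_N \ge \calL(\rho,f,j)\ge 0$ are all correct and match the intended proof.

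However, your identification of the initial datum is genuinely flawed as written. You argue that $\calE(\rho(0))\le\calE(\rho_0)$ and $\calE(\rho_0)\le\calE(\rho(0))$, ``so $\rho(0)=\rho_0$''. Equality of the relative entropies of two measures does not imply the measures coincide ($\calE$ is not injective), so this step fails; moreover, the second inequality is itself a non sequitur, since lower semicontinuity along $\iota_N c^N_0\to\rho_0$ only compares $\calE(\rho_0)$ with $\liminf_N E_N(c^N_0)$ and says nothing about $\rho(0)$. This matters structurally, because the well-preparedness clause of Theorem~\ref{thm:Convergence} is formulated relative to $\rho(0)$ (it requires $\iota_N c^N(0)\to\rho(0)$ and $E_N(c^N(0))\to\calE(\rho(0))$), so you must know $\rho(0)=\rho_0$ \emph{before} you may invoke the liminf inequality for $L_N$, and again at the end to assert the correct initial datum. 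The correct (and easy) argument is available in the paper's toolkit: Proposition~\ref{prop:strong_compactness} yields the pointwise-in-time convergence $\rho^N(t)\rightharpoonup^\ast\rho(t)$ in $\calM_+(X)$ for \emph{every} $t\in[0,T]$; taking $t=0$ gives $\iota_N c^N_0\rightharpoonup^\ast\rho(0)$, while well-preparedness gives $\iota_N c^N_0\to\rho_0$ strongly in $\rmL^1(X)$, whence $\rho(0)=\rho_0$ by uniqueness of weak-$^\ast$ limits. (A secondary, minor point: your claim of a \emph{unique} local ODE solution is unjustified, since for stoichiometric exponents $\alpha^r_i,\beta^r_i<1$ the right-hand side of \eqref{eq:RDSysDisc} is not Lipschitz at the boundary $\{c_{i,k}=0\}$ and finite-energy initial data may vanish there; but only existence is needed, which follows from Peano's theorem together with quasi-positivity of the vector field and the entropy bound.)
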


The proofs of the main results are given in the next two sections: 
In Section \ref{s:ProofConvergence}, we show the necessary compactness and the lower limit of dissipation functionals leading to Theorem~\ref{thm:Convergence}.
In Section \ref{s:CRs} we show that the chain rules and energy-dissipation principles for both, the discrete and the continuous, reaction-diffusion systems hold. \bigskip

Finally, we comment on the restrictions on the stoichiometric vectors. In fact, they are needed for deriving suitable a priori bounds. If however, these bounds can be obtained by other means, then the conditions can be dropped completely. 

\begin{remark}[$\rmL^\infty$ bounds via bounding boxes]\label{rem:BoundBox} 
It can be easily checked that the proofs given below hold for general stoichiometric vectors $\alpha^r$ and $\beta^r$, if we know that the discrete solutions $c^N$ are bounded uniformly in $\rmL^\infty$. Indeed, in this case the limit solution $\rho$  
is also bounded in $\rmL^\infty$ and we can set $\pcrit = \infty$
and check that all proofs work similarly. \medskip

We highlight this fact since for several classes of reaction-diffusion systems there exist so-called positively 
invariant regions in the sense of \cite[Cha.\,14\S B]{Smol94SWRD}.  In the simplest 
case such a region is a rectangular set, also called bounding box:
\[
\mathsf B\coloneqq \prod_{i\in I} \big[0,b_i\big] \coloneqq\big\{\, c \in [0,\infty)^I\,
\big| \, 0\leq c_i \leq b_i \text{ for all }i\in I\,\big\}. 
\]
Positive invariance means that solutions starting inside a region
(i.e.\ $c(t,x)\in \mathsf B$) remain inside the region for all $t>0$. In the 
case of a box the invariance follows, if for $c\in \partial \mathsf B$ the 
reaction vector $R(c)$ points inwards, i.e., $c_i=0$ implies $R_i(c)\geq 0$ and
$c_i=b_i$ implies $R_i(c)\leq 0$. 

Consider a reaction systems where all reactions are of the type $\alpha_i X_i
\rightleftharpoons \beta_\imath X_\imath$ which is additionally in detailed
balance for $w=(w_i)_{i\in I}$. Then, it can be shown that $\mathsf B=\prod_I
[0,w_i]$ is indeed a bounding box. Often there is a family of 
detailed-balance equilibria $w$, which then allow for arbitrary large bounding 
boxes.
\end{remark}

\section{Proof of convergence}
\label{s:ProofConvergence}

The aim of this section is to prove Theorem~\ref{thm:Convergence}, the convergence of the discrete gradient systems to the continuum gradient system. 
We split the section in two parts, first focusing on the compactness in Section~\ref{sub:EstimComp}, before establishing the lower limit in Section \ref{sub:LowLimDiss}.

To show compactness, we rely on 
the $N$-uniform $\rmL^\infty$-bound for the energies and the $N$-uniform bound of the dissipation functionals to obtain suitable a priori estimates for the embedded discrete concentrations $\rho^N= \iota_N c^N$.
We introduce a new and efficient method to show equi-integrability of the fluxes $F^N$ and $J^N$ in Proposition \ref{prop:FluxBound}.
Finally, an argument based on the Aubin-Lions-Simon lemma allows us to derive 
strong compactness of $\rho^N$ in Proposition~\ref{prop:strong_compactness}.
One of the biggest advantages of our approach is its ability of handling non-convex
dependencies on $\rho^N$ of the dissipation functionals.

The lower limit inequalities are then obtained for each rate and each slope term, independently, relying either on Ioffe's liminf theorem or, for the diffusive rate, on a dualization argument.

Throughout this section, we fix a time horizon $T>0$ and denote by $\Omega_T\coloneqq[0,T]\times\bbT^d$ the parabolic cylinder.

\subsection{Compactness}
\label{sub:EstimComp}

We start our considerations from the $N$-uniform $\rmL^\infty$-bound on the energies and the $N$-uniform bound on the dissipations. We introduce the explicit constants $K^A_\rmx$ that will 
make it easier to see the influence of the different bounds throughout the section. 

We start with the a priori bounds
\begin{subequations}
    \label{eq:AprioriEstimBasic}
\begin{align}
K^E	& \coloneqq \sup_{N\in\bbN}\sup_{t\in[0,T]} E_N(c^N(t)) < \infty,
	\label{eq:AprioriEstimBEnergy}\\
K^D	&\coloneqq\sup_{N\in\bbN}D_N(c^N,F^N,J^N) < \infty.
	\label{eq:AprioriEstimBDiss}
\end{align}
\end{subequations}
In particular, these imply
\begin{subequations}	
\label{eq:AprioriEstimatesDissipation_parts}
\begin{align}
	\label{eq:AprioriEstimatesL1Bound}
     K^E_{\rmL^1} & \coloneqq \sup_{N\in \bbN} \sup_{t\in [0,T]} \| c^N\|_{\rmL^1_N} < \infty,
 \\   
 K^R_{\diff}    &\coloneqq\sup_{N\in\bbN}\int_0^T R_{N,\diff}(c^N(t),F^N(t))\dd t < \infty,
	\label{eq:AprioriEstimatesRateDiff}
\\
 K^R_{\react}	& \coloneqq\sup_{N\in\bbN}\int_0^T R_{N,\react}(c^N(t),J^N(t))
      \dd t < \infty,	\label{eq:AprioriEstimatesRateReact}
 \\
 K^S_\diff	& \coloneqq \sup_{N\in\bbN}\int_0^T S_{N,\diff}(c^N(t))\dd t < \infty,
	\label{eq:AprioriEstimatesSlopeDiff}
\\
 K^S_\react	&\coloneqq\sup_{N\in\bbN}\int_0^T S_{N,\react}(c^N(t))\dd t < \infty.
	\label{eq:AprioriEstimatesSlopeReact}
\end{align}
\end{subequations} 
Using the embeddings from Section \ref{sec:Embedding}, we define the curves 
\[
\rho^N\coloneqq\iota_Nc^N, \quad f^N\coloneqq\iota_{N,\diff}F^N, \quad j^N\coloneqq\iota_{N,\react}J^N.
\]
To derive strong relative compactness of $(\rho^N)_{N\in\bbN}$ in $\rmL^1([0,T]\times X)$, we rely on an Aubin-Lions-type result. 
Since these piecewise constant functions are not weakly differentiable  and we will later rely on Sobolev embeddings to obtain higher integrability,
we introduce a second interpolant $\tilde\rho^N$ via 
\begin{equation}
    \label{eq:Def.tilde.rho.N}
    \tilde \rho^N_i = \omega_i \big( \tilde\iota_N U_i^N\big)^2 \quad \text{ where } \quad U^N_{i,k}= 
    \Big( \frac{c_{i,k}}{w^N_{i,k}}\Big)^{1/2} ,
\end{equation}
where the linear interpolator $\tilde \iota_N$ generates continuous and piecewise polynomial functions $\tilde u^N=\tilde\iota_N U^N$, the derivatives of which can be 
controlled uniformly in $N$ by $K^S_\diff$. 

Employing an Aubin-Lions-Simons-type argument, we show relative compactness 
of $(\tilde\rho^N)_{N\in\bbN}$. We then conclude by showing that 
$\norm{\tilde\rho^N-\rho^N}_{\rmL^1([0,T]\times X)}\to 0$ as $N\to \infty$. 

We highlight that, we will be able to show that $\rho^N$ is bounded in an Orlicz space slightly better than $\rmL^{\pcrit }([0,T]\times \bbT^d)$ with $\pcrit =1+2/d$.

\begin{remark}
    Note that our particular choice for the auxiliary embedding $\tilde\iota_N$ is the 
    $d$-linear interpolation, though we stress that other interpolations are possible as long as Lemma~\ref{lem:spatial_reg_NEW} is provable. In particular, we believe it is possible to employ a similar argument for more general geometries when replacing the uniform grids $\bbZ^d_N$.

    Furthermore, we point out that strong $\rmL^1$ compactness of $\rho^N$ could also be obtained directly by applying \cite[Theorem 4.2]{rossi2003tightness} as is done, e.g. in \cite[Theorem 4.8]{HT23}. However, our method additionally allows us to obtain higher integrability as we demonstrate in Proposition~\ref{prop:integrability}. 
\end{remark}

\begin{definition}[Continuous embedding]
Let $M \coloneqq \{0,1\}^d$. For $m\in M$ we define the functions $\fff^N_m:\bbT^d\to [0,1]$ via 
    \begin{align*}
        \fff^N_m(x) = \prod_{k=1}^d 
        \begin{cases}
        N x_k & \text{for } m_k = 1\\
        1{-}N x_k & \text{ for } m_k = 0
        \end{cases} \ \text{ for } x \in Q^N_0 \quad \text{and} \quad 
        \fff^N_m(x)=0 \ \text{ otherwise}. 
    \end{align*}
Recalling the shift operator $\shift^N_y$ from \eqref{eq:shift}, 
we define the continuous embedding operator
\begin{align}\label{eq:ContinuousEmbedding}
       \tilde\iota_N (U^N_i)(x) \coloneqq \tilde u^N_i(x) 
         &\coloneqq \sum_{k\in\bbZ_N^d} \sum_{m\in M} U^N_{i,k+m} \fff^N_m(x{-}k/N)
\end{align}
and its dual  discretization operator
    \begin{align*}    
        \tilde\iota^*_N(\varphi_i)_k &= N^d\int_{Q^N_k} \sum_{m\in M} \shift^N_{-m}\varphi_i(x)  \shift^N_{-k} \fff^N_m(x) \dd x.
    \end{align*}
\end{definition}
The duality of $\tilde\iota_N$ and $\tilde\iota^*_N$ follows by a direct calculation: 
\begin{align*}
 &  \int_{\bbT^d} \varphi_i \tilde\iota_N U^N_i\dd x 
    = \sum_{k\in\bbZ_N^d} \sum_{m\in M} U^N_{i,k+m} \int_{Q^N_k} 
    \varphi_i(x)  \shift^N_{-k} \fff^N_m(x) \dd x\\
& = \sum_{k\in\bbZ_N^d} \sum_{m\in M} U^N_{i,k} \int_{Q^N_{k-m}} 
    \varphi_i(x)  \shift^N_{m-k} \fff^N_m(x) \dd x 
  = \sum_{k\in\bbZ_N^d} \sum_{m\in M} U^N_{i,k} \int_{Q^N_k} \shift^N_{-m} 
      \varphi_i(x)  \shift^N_{-k} \fff^N_m(x) \dd x\\
& = \frac{1}{N^d}\sum_{k\in\bbZ_N^d} U^N_{i,k} \: N^d \int_{Q^N_k} \sum_{m\in M} 
   \shift^N_{-m}\varphi_i(x)  \shift^N_{-k} \fff^N_m(x) \dd x
 = \frac{1}{N^d}\sum_{k\in\bbZ_N^d} \tilde\iota^*_N(\varphi_i)_k U^N_{i,k} .
 \end{align*}
To understand the usage of the functions $\fff^N_m$ better it is useful to define the functions 
\[
 \hhh^N_0(x) = \sum_{m\in M} S^N_m\fff^N_m(x)   \quad \text{and} \quad \hhh^N_k =\shift^N_{-k} \hhh^N_0.
 \]
 Then, all $\hhh^N_k$ are piecewise polynomial and \emph{continuous}, and  
 the simple interpolation formula 
 \[
 \tilde u^N_i = \tilde\iota_N U^N_i = \sum_{k\in \bbZ^d_N} U^N_{i,k} \hhh^N_k
 \]
 holds. The following properties of $\fff^N_m$ and $\hhh^N_k$ will be used 
 in the sequel without further specification:
 \begin{subequations}
     \label{eq:Props.fff.hhh}
 \begin{align}
  \label{eq:Props.fff.hhh.a}
     &\fff^N_m(x)\in [0,1], \quad \int_{\bbT^d}\fff^N_m (x) \dd x = \frac1{(2N)^d}, \quad \sum_{m\in M} \fff^N_m (x) = \one_{Q^N_0}(x), 
     \\
 \label{eq:Props.fff.hhh.b}
     & \hhh^N_m(x)\in [0,1], \quad \int_{\bbT^d}\hhh^N_k(x) \dd x=\frac1{N^d}, \quad 
     \sum_{k\in \bbZ^d_N} \hhh^N_k(x) = 1 \text{ on }\bbT^d,
\\
 \label{eq:Props.fff.hhh.c}
     &
     \|\nabla \hhh^N_m\|_{\rmL^\infty} \leq N, \quad
     \|\nabla \hhh^N_m\|_{\rmL^1} \leq C_d, \quad \hhh^N_k(x) \geq \frac1{2^d} 
     \shift^N_{\frac12\mathbf{1}_d}\one_{Q^N_k}(x),  
 \end{align}
 where we denoted $\mathbf{1}_d = (1,\ldots,1)\in \R^d$.
\end{subequations}
The next results shows that the concentrations $c^N$ enjoy a higher 
integrability as the one obtained from the uniform bound $K^E$ for $E_N$. 
For this we use the bound $K^S_\diff$ in \eqref{eq:AprioriEstimatesSlopeDiff} 
and a suitable Galiardo-Nirenberg interpolation applied to $\tilde u^N$. 
We first show that $\nabla \tilde u^N$ is uniformly bounded in $\rmL^2([0,T] 
\times \bbT^d)$, which is a consequence of the fact that $S_{N,\diff}(c^N)$ 
is in fact a quadratic form in $U^N$.  

\begin{lemma}[Spatial regularity]\label{lem:spatial_reg_NEW}
Let $c^N$ satisfy the a priori estimates \eqref{eq:AprioriEstimatesSlopeDiff}. 
Then, we have 
\[
\iint_\OmegaT  |\nabla \tilde u^N|^2 \dd x \dd t \leq \bra[\Big]{\frac43}^{d-1} \frac{K^S_\diff}{\delta_*\omega_*} 
\]
\end{lemma}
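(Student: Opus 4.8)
The plan is to control the continuous Dirichlet integral of $\tilde u^N$ by the \emph{discrete} Dirichlet form of $U^N$ and then to recognise the latter as a lower bound for $S_{N,\diff}$; the powers $N^{2-d}$ appearing on both sides will cancel.

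First I would rewrite the slope in terms of $U^N_{i,k}=(c_{i,k}/w^N_{i,k})^{1/2}$. By Definition~\ref{def:slope_disc},
\[
S_{N,\diff}(c^N)=\frac{1}{N^d}\sum_{k\in\bbZ_N^d}\sum_{i\in I}\sum_{e\in\Eu}2\delta_i N^2\sqrt{w^N_{i,k+e}w^N_{i,k}}\,\bigl(U^N_{i,k+e}-U^N_{i,k}\bigr)^2 .
\]
Using $\delta_i\ge\delta_*$ together with $\sqrt{w^N_{i,k+e}w^N_{i,k}}\ge\omega_*$ from \eqref{as:reference_bound}, this yields the one-sided estimate
\[
S_{N,\diff}(c^N)\ \ge\ 2\delta_*\omega_*\,N^{2-d}\sum_{i\in I}\sum_{k\in\bbZ_N^d}\sum_{e\in\Eu}\bigl(U^N_{i,k+e}-U^N_{i,k}\bigr)^2 .
\]

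The core of the argument is the matching upper bound for $\int_{\bbT^d}|\nabla\tilde u^N_i|^2\dd x$ by the same discrete difference sum, and this local interpolation estimate is the step I expect to be the main obstacle. I would argue cell by cell on $Q^N_k$, using that $\tilde u^N_i$ is the tensor-product $d$-linear interpolant, so that on each cell the derivative $\partial_{x_l}\tilde u^N_i$ is independent of $x_l$ and equals the $(d-1)$-linear interpolant, in the transverse variables, of the $2^{d-1}$ edge differences $U^N_{i,\cdot+e_l}-U^N_{i,\cdot}$ carried by the $e_l$-edges of the cell. Squaring and integrating over the cell produces the one-dimensional transverse mass factors $\int_0^1\phi_a\phi_b\dd t$ with the affine nodal shapes $\phi_0(t)=1-t$, $\phi_1(t)=t$; bounding these factors lets me extract the diagonal squared differences. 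After the scaling $\partial_{x_l}=N\partial_{y_l}$, $\dd x=N^{-d}\dd y$ and after summing over all cells---where each $e_l$-edge is shared by exactly $2^{d-1}$ cells---I obtain
\[
\int_{\bbT^d}|\nabla\tilde u^N_i|^2\dd x\ \le\ 2\Bigl(\tfrac{4}{3}\Bigr)^{d-1} N^{2-d}\sum_{k\in\bbZ_N^d}\sum_{e\in\Eu}\bigl(U^N_{i,k+e}-U^N_{i,k}\bigr)^2 ,
\]
the constant stemming solely from the $(d-1)$ transverse mass factors and the edge multiplicity. (A convexity/Jensen estimate for the multilinear interpolant in fact gives a strictly smaller constant, so the stated bound has room to spare.)

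Finally I would assemble the pieces: summing the local estimate over $i\in I$, integrating over $t\in[0,T]$, and inserting the lower bound for $S_{N,\diff}$, the powers $N^{2-d}$ cancel and give
\[
\iint_\OmegaT|\nabla\tilde u^N|^2\dd x\dd t\ \le\ \frac{2(4/3)^{d-1}}{2\delta_*\omega_*}\int_0^T S_{N,\diff}(c^N(t))\dd t\ =\ \Bigl(\tfrac{4}{3}\Bigr)^{d-1}\frac{1}{\delta_*\omega_*}\int_0^T S_{N,\diff}(c^N(t))\dd t ,
\]
and the claim follows from the a priori bound \eqref{eq:AprioriEstimatesSlopeDiff}. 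The two routine ingredients are the pointwise reduction of $S_{N,\diff}$ and the time integration; the genuine technical point is the cellwise bookkeeping of the multilinear interpolant's gradient described above.
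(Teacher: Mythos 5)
Your proposal is correct and follows essentially the same route as the paper: both reduce $S_{N,\diff}$ to the discrete Dirichlet form of $U^N$ via $\delta_i\ge\delta_*$, $\sqrt{w^N_{i,k+e}w^N_{i,k}}\ge\omega_*$, and both bound $\iint_{\Omega_T}|\nabla\tilde u^N|^2$ by that same difference sum by expressing $\partial_{x_l}\tilde u^N$ through the edge differences $U^N_{\cdot+e_l}-U^N_\cdot$ and tracking the tent-function integrals together with the $2^{d-1}$ edge/cell multiplicity. The only difference is bookkeeping — the paper argues globally with the hat functions $\hhh^N_k$ and a pointwise Cauchy--Schwarz estimate, while you argue cell by cell with the $1$D mass factors $\int_0^1\phi_a\phi_b\,\rmd t$ — and your slightly cruder local constant $2(4/3)^{d-1}$ still yields exactly the stated bound after combining with the factor $2\delta_*\omega_*$ from the slope.
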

\begin{proof}
We work for fixed $N$ and $i$ and hence drop these indices throughout this proof. 

We recall that each $\hhh_k$ is nontrivial only on $2^d$ cubes $Q_{k-m}$. Moreover, 
fixing $l\in \{1,\ldots, d\}$ the derivative $\partial_{x_l} h_k$ has positive values in those $2^{d-1}$ cubes with $m_l=1$ and negative values in those with $m_l=0$:
\[
\partial_{x_l} h_k = \underbrace{\partial_{x_l} h_k \one_{V_{l,k}}}_{\geq 0}  -
\underbrace{\big(-\partial_{x_l}h_k \one_{V_{l,k+e_l}}\big)}_{\geq 0} \quad
\text{with } V_{l,k}= \bigcup\nolimits_{\genfrac{}{}{0pt}{2}{m\in M}{m_l=1}} Q^N_{k-m}.
\]
Using $\partial_{x_l} h_k =- \partial_{x_l} h_{k-e_l}$ on $V_{l,k}$ we find 
\[
\partial_{x_l} \tilde u = \sum_{k\in\bbZ^d_N} U_k \partial_{x_l} h_k = \sum_{k\in\bbZ^d_N} \big( U_k{-}U_{k-e_l}\big) \partial_{x_l} h_k \one_{V_{l,k}}. 
\]
At each $x \in \bbT^d$ there are at most $2^{d-1}$ terms, since each $V_{l,k}$ consists 
of $2^{d-1}$ small cubes. Hence, we obtain 
\begin{align*}
\int_{\bbT^d} \big| \partial_{x_l} \tilde u\big|^2 \dd x 
& \leq 2^{d-1}  \int_{\bbT^d} \sum_{k\in\bbZ^d_N} |U_k{-}U_{k-e_l}|^2 
     \big|\partial_{x_l}h_k\big|^2 \one_{V_{l,k}} \dd x 
= \bra[\Big]{\frac43}^{d-1} \frac1{N^d} \sum_{k\in \bbZ_N^d} N^2 
    \big| U_k{-} U_{k-e_l}|^2 ,
\end{align*}
where we used $\int_{\bbT^d} |\partial_{x_l}h_k|^2 \one_{V_{l,k}} \dd x = N^2(2/3)^{d-1}$.
This concludes the proof.
\end{proof}

To obtain uniform higher integrability of the densities $\rho^N$ we combine the 
spatial regularity with the uniform energy bound \eqref{eq:AprioriEstimBEnergy}. The former provides $\rmL^2$ 
integrability in time in the good space $\rmH^1(\bbT^d)$ while the latter provides
boundedness of $\calE(\tilde\rho^N)$ which is slightly better than $\esssup \tilde\rho^N 
(t) \leq K^E_{\rmL^1}$. We will exploit the following interpolation estimate that follows 
by applying a suitable Gagliardo-Nierenberg interpolation, see Appendix \ref{app:GagliaNirenb} for the proof of a more general version. 
Setting $\alpha\geq 2$,  $ \alpha d \in [4,4{+}2d]$ and $ q= 2d/(4{-}(\alpha{-}2)d)\in [1,\infty]$,  
it holds the bilinear interpolation estimate  
\begin{align}
\label{eq:GaglNireInterpol}
\iint_\OmegaT \! u^\alpha v \dd x \dd t &  
\leq C \| v\|_{\rmL^\infty(0,T;\rmL^q (\bbT^d))} 
      \| u\|^{\alpha -2}_{\rmL^\infty(0,T;\rmL^2(\bbT^d))} 
      \int_0^T\! \|u(t)\|^2 _{\rmH^1(\bbT^d)} \dd t,
\end{align}
for a suitable constant $C$ depending on $d$ and $\alpha$. To estimate 
$\tilde\rho^N_i = \omega_i (\tilde u^N)^2$ we will apply this estimate for 
$u=\tilde u^N_i$ and 
either $v\equiv 1$ or $v= \LB((\tilde u^N_i)^2)^\beta$. 

\begin{proposition}[Improved integrability]
\label{prop:integrability}
Let $c^N$ satisfy the a priori estimates \eqref{eq:AprioriEstimBEnergy}, \eqref{eq:AprioriEstimatesL1Bound}, and \eqref{eq:AprioriEstimatesSlopeDiff}. 
Then, with $\pcrit =1+2/d$ from Assumption
\eqref{ass:Reactions1} we have 

\begin{subequations}
\label{eq:HighInte.pcrit}
\begin{align}
    \label{eq:HighInte.pcrit.a}
   \sup_{N\in\bbN}\norm{c^N}_{\rmL^{\pcrit}([0,T]\times X_N)} \leq 
   C_{(1)},
\end{align}
where $C_{(1)}$ only depends on $d$, $\omega_*$, $\omega^*$ and $\delta_*$.
Moreover, with $\eta_d = 2/d$ for $d\geq 3$, $\eta_2\in (0,1)$, and $\eta_1=1$, 
there exists $C_{(2)}>0$ depending on $d,\,\omega_*,\omega^*,\, \delta_*,\, K^E,\, K^E_{\rmL^1}$ 
such that 
\begin{align}
    \label{eq:HighInte.pcrit.b}
  \sup_{N\in\bbN}\frac{1}{N^d}\sum_{k\in\bbZ^d_N}\sum_{i\in I}\int_0^T\bra{c_{i,k}^N}^{\pcrit} 
   \big( \log(1+c_{i,k}^N)\,\big)^{\eta_d} 
   \dd t \leq C_{(2)}.
\end{align}
In particular, analogous $N$-uniform estimates to \eqref{eq:HighInte.pcrit.a} and \eqref{eq:HighInte.pcrit.b} also hold for $\rho^N$.
\end{subequations} 
\end{proposition}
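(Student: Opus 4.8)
The plan is to transfer every estimate to the continuous interpolant $\tilde u^N=\tilde\iota_N U^N$, apply the parabolic Gagliardo--Nirenberg bound \eqref{eq:GaglNireInterpol}, and transfer the result back to the $c^N_{i,k}$. Two elementary transfer facts drive this. Since $\sum_k \hhh^N_k\equiv 1$ and $\hhh^N_k\ge 0$, the value $\tilde u^N_i(x)=\sum_k U^N_{i,k}\hhh^N_k(x)$ is a pointwise convex combination of the nodal values $U^N_{i,k}=(c^N_{i,k}/w^N_{i,k})^{1/2}$; hence Jensen gives $(\tilde u^N_i)^2\le\sum_k (U^N_{i,k})^2\hhh^N_k$, and integrating with $\int_{\bbT^d}\hhh^N_k\dd x=N^{-d}$ and \eqref{as:reference_bound} yields the uniform bound $\norm{\tilde u^N_i(t)}_{\rmL^2(\bbT^d)}^2\le K^E_{\rmL^1}/\omega_*$. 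Together with Lemma~\ref{lem:spatial_reg_NEW} this shows $(\tilde u^N)_N$ is bounded in $\rmL^\infty(0,T;\rmL^2)\cap\rmL^2(0,T;\rmH^1)$. Conversely, the lower bound $\hhh^N_k\ge 2^{-d}\shift^N_{\frac12\mathbf{1}_d}\one_{Q^N_k}$ from \eqref{eq:Props.fff.hhh.c} gives, on the pairwise disjoint shifted cubes, $(\tilde u^N_i)^2\ge 4^{-d}c^N_{i,k}/w^N_{i,k}$, so that any integral of a power of $\tilde u^N$ (times a monotone weight) bounds from above the corresponding discrete sum over $c^N$, up to constants depending only on $d$ and $\omega^*$.

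For \eqref{eq:HighInte.pcrit.a} I would apply \eqref{eq:GaglNireInterpol} with $v\equiv 1$ and $\alpha=2\pcrit=2+4/d$ (so $\alpha d=4+2d$, the top endpoint, with formal exponent $q=\infty$); this is precisely the classical parabolic Gagliardo--Nirenberg inequality, whose right-hand side is controlled by the two uniform bounds just obtained. Hence $\sup_N\iint_\OmegaT(\tilde u^N_i)^{2\pcrit}\dd x\dd t<\infty$, and the disjoint-cube transfer converts this into $\norm{c^N}_{\rmL^{\pcrit}([0,T]\times X_N)}\le C_{(1)}$. Since $\rho^N=\iota_N c^N$ is piecewise constant, its $\rmL^{\pcrit}(\OmegaT)$-norm equals the discrete norm, which settles the corresponding claim for $\rho^N$.

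The logarithmic refinement \eqref{eq:HighInte.pcrit.b} is where the energy bound \eqref{eq:AprioriEstimBEnergy} enters, and transferring that bound to $\tilde u^N$ is the main obstacle: $u\mapsto\LB(u^2)$ is \emph{not} convex, so a naive Jensen argument fails. Instead I would work with $\phi(r)\coloneqq r\log(1+r)$, which is increasing and convex. From $(\tilde u^N_i)^2\le\sum_k(U^N_{i,k})^2\hhh^N_k$, monotonicity and then Jensen give $\int_{\bbT^d}\phi((\tilde u^N_i)^2)\dd x\le\frac1{N^d}\sum_k\phi(c^N_{i,k}/w^N_{i,k})$, and since $\phi(r)\le C(1+\LB(r))$ with $\LB\ge 0$, the right-hand side is bounded by $C(1+K^E/\omega_*)$. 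This yields $\sup_t\int_{\bbT^d}(\tilde u^N_i)^2\log(1+(\tilde u^N_i)^2)\dd x\le C$ uniformly in $N$.

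Finally I would write $(\tilde u^N)^{2\pcrit}(\log(1+(\tilde u^N)^2))^{\eta_d}=(\tilde u^N)^\alpha\,v$ with $v=(\tilde u^N)^{2\pcrit-\alpha}(\log(1+(\tilde u^N)^2))^{\eta_d}$, and apply \eqref{eq:GaglNireInterpol}, choosing $\alpha,\eta_d$ so that $\|v\|_{\rmL^\infty(0,T;\rmL^q)}$ is exactly the bounded energy quantity of the previous paragraph, i.e.\ $(2\pcrit-\alpha)q=2$ and $\eta_d q=1$. For $d\ge 3$ this forces $\alpha=2$, $q=d/2$, hence $\eta_d=2/d$ (with $\|\tilde u^N\|^{\alpha-2}_{\rmL^\infty\rmL^2}=1$); for $d=1$ one takes $\alpha=4$, $q=1$, $\eta_1=1$, using $\rmH^1(\bbT)\hookrightarrow\rmL^\infty$; for $d=2$ the endpoint $q=\infty$ is excluded by the failure of $\rmH^1(\bbT^2)\hookrightarrow\rmL^\infty$, so one fixes any $\eta_2\in(0,1)$ and takes $\alpha=4-2\eta_2\in(2,4)$, $q=1/\eta_2$, which is the source of the strict inequality. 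In each case the disjoint-cube transfer turns the resulting uniform bound on $\iint_\OmegaT(\tilde u^N)^{2\pcrit}(\log(1+(\tilde u^N)^2))^{\eta_d}\dd x\dd t$ into \eqref{eq:HighInte.pcrit.b}, and piecewise constancy once more yields the statement for $\rho^N$.
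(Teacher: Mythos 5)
Your proof is correct and follows essentially the same route as the paper's: the same auxiliary interpolant $\tilde u^N$, the same parabolic Gagliardo--Nirenberg bound \eqref{eq:GaglNireInterpol} with identical exponent choices ($\alpha=2\pcrit$, $v\equiv 1$ for \eqref{eq:HighInte.pcrit.a}; $\alpha=2$, $q=d/2$ for $d\ge 3$; $\alpha=4$, $q=1$ for $d=1$; $\alpha=4-2\eta_2$, $q=1/\eta_2$ for $d=2$), and the same transfer back to the discrete sums via $\hhh^N_k\ge 2^{-d}\shift^N_{\frac12\mathbf{1}_d}\one_{Q^N_k}$ on disjoint shifted cubes. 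The one genuine difference is a point where you are more careful than the paper: the paper takes the weight $v=\LB((\tilde u^N)^2)^{\eta_d}$ and silently uses $\sup_t\|\LB((\tilde u^N)^2)(t)\|_{\rmL^1}\lesssim K^E$, i.e.\ that the discrete entropy bound survives the interpolation, even though $u\mapsto\LB(u^2)$ is not convex, so this does not follow from a plain Jensen argument; you instead work with the increasing \emph{and} convex function $\phi(r)=r\log(1+r)$, for which monotonicity of $\phi$ combined with Jensen applied to $(\tilde u^N)^2\le\sum_k (U^N_k)^2\hhh^N_k$ gives the transfer cleanly, and $\phi(r)\le C(1+\LB(r))$ recovers the bound from $K^E$. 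Your variant thus fills in (rather than merely reproduces) an implicit step of the paper's write-up; the paper's choice can be repaired in the same spirit, e.g.\ using that $\LB$ is convex with minimum at $r=1$, whence $\LB(s)\le 1+\LB(m)$ for all $0\le s\le m$, followed by Jensen applied to $\LB(m)$.
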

\begin{proof}
We consider only one species $i$ and drop its index throughout this proof.
 In light of \eqref{eq:Props.fff.hhh.c}, it is sufficient to prove the spatial regularity for $\tilde\rho^N$. 
From the definition of $\tilde\rho^N =\omega (\tilde u^N)^2$ we immediately obtain
$\| \tilde u^N(t)\|_{\rmL^2}^2 \leq K^E_{\rmL^1} /\omega_*$. Applying 
\eqref{eq:GaglNireInterpol} with $u=\tilde u^N_i$, $v\equiv 1$ and $\alpha = 2\pcrit$ (which implies $q=\infty$), we find 
\[
\iint_\OmegaT \big(\tilde u^N\big)^{2\pcrit} \dd x \dd t 
 \leq C \big\|\tilde u^N \big\|^{4/d}_{\rmL^\infty(0,T;\rmL^2)} 
  \int_0^T \big\| \tilde u^N \big\|_{\rmH^1(X)}^2 \dd t. 
\]
Using $\tilde\rho^N =\omega (\tilde u^N)^2$ and Lemma \ref{lem:spatial_reg_NEW}, we obtain \eqref{eq:HighInte.pcrit.a}. 

For the second part we choose $u=\tilde u^N_i$ and $v= \LB\big( ( \tilde u^N)^2\big)^{\eta_d}$.
For $d\geq 3$ let $\alpha=2$ and $q=d/2$ to find
\[
\iint_\OmegaT \! u^2 \LB(u^2)^{2/d} \dd x \dd t
\leq \int_0^T\|u\|_{\rmL^{2d/d-2}}^2 \|\LB(u^2)\|_{\rmL^1} \dd t  
\leq  C K_E^{2/d} K_\mathrm{diff}.
\]
For $d=1$ we choose $\alpha=4$ and $q=1$ giving 
\[
\iint_\OmegaT \! u^4 \LB(u^2) \dd x \dd t 
\leq \int_0^T\! \|u\|_{\rmL^\infty}^4 \|\LB(u^2)\|_{\rmL^1}  \dd t  \leq 
C  K_E^2 K^S_\mathrm{diff}.
\]
For $d=2$ we choose $\eta_2\in (0,1) $ arbitrary and set $q=1/\eta_2$ and
$\alpha =4{-}2\eta_2$. This leads to the estimate  
\begin{align*}
&\iint_\OmegaT \! u^{4-2\eta_2} \LB(u^2)^{\eta_2} \dd x \dd t
 \leq  \int_0^T\! \|u\|_{\rmL^{(4-2\eta_2)/(1-\eta_2)}}^{4-2\eta_2} 
 \|\LB(u^2)^\eta_2\|_{\rmL^{1/\eta_2}}^{1/\eta_2}  \dd t \\
 &= \int_0^T\! \|u\|_{\rmL^{(4-2\eta_2)/(1-\eta_2)}}^{4-2\eta_2} 
 \|\LB(u^2)\|_{\rmL^1}  \dd t \leq  \rmC(\eta_2)  K_E^2 K_\mathrm{diff},
\end{align*}
where $C(\eta_2)\to \infty$ for $\eta_2 \nearrow 1$. 

Using $u^q \log(e{+}u)^\eta \leq C u^{q-2\eta} \big( 1+ \LB(u^2)\big)^\eta $ and $\tilde\rho^N \leq \omega^* (\tilde u^N)^2$, the estimate \eqref{eq:HighInte.pcrit.b} follows.
\end{proof}

The higher integrability derived in \eqref{eq:HighInte.pcrit.b} will allow 
us to show that the diffusion fluxes $ f^N=\iota_{N,\diff} F^N$ and the 
reaction fluxes $ j^N=  \iota_{N,\react} J^N$ are uniformly equi-integrable, 
and hence one may choose a subsequence converging weakly in $\rmL^1(\OmegaT)$. 
The estimate for $f^N$ will rely on the magical estimate \eqref{eq:CC.prop.d}, whereas the estimate for $j^N$ has to be based on the weaker result of Lemma \ref{lem:Superlinear}.

\begin{proposition}[Boundedness of fluxes]
\label{prop:FluxBound} 
Assume  \eqref{ass:Reactions1} and let $(c^N,F^N,J^N)$ satisfy the a priori estimates \eqref{eq:AprioriEstimBasic}. 
Then, there exist constants $C^\diff_{\mathrm{flux}}>0$ and $C^\react_\mathrm{flux}>0$ 
and a convex superlinear function $\Phi_d:\R\to [0,\infty)$
depending only on $d$,
such that for all $N\in\bbN$, $e\in\Eu$, $i\in I $, and $ r \in R $ s we have
\begin{align*}
\frac{1}{N^d}\sum_{k\in\bbZ^d_N}\int_0^T \C\bra[\bigg]{\frac{F^N_{i,k,e}}{N}} \dd t \leq C^\diff_{\mathrm{flux}}
\quad \text{and} \quad 
\frac{1}{N^d}\sum_{k\in\bbZ^d_N}\int_0^T \Phi_d ( J^N_{r,k}) \dd t \leq C^\react_{\mathrm{flux}},
\end{align*}
where $C^\diff_{\mathrm{flux}}$ ($ C^\react_{\mathrm{flux}}$)  depends only on the constants $C_{(1)}$ and $C_{(2)}$ from \eqref{eq:HighInte.pcrit} and $K^R_\diff$ ($K^R_\react$). 

Moreover, there exist curves of fluxes $f$ and $j$ with $ f_{i,e}\in \rmL^1(\OmegaT)$ and $j_r\in \rmL^1(\OmegaT)$ such that along a (not renamed) subsequence, we have
\begin{align*}
   &\iint_\OmegaT \C(f_{i,e}) \dd x \dd t \leq C^\diff_{\mathrm{flux}}
    \quad \text{and} \quad 
    \iint_\OmegaT \Phi_d( j_r) \dd x \dd t \leq C^\react_{\mathrm{flux}},
\\
  & f^N_{i,e} \rightharpoonup f_{i,e} \quad \text{and} \quad 
     j^N_r \rightharpoonup j_r  \quad\text{weakly in } \rmL^1(\OmegaT),
\end{align*}
where we recall $f^N=\iota_{N,\diff} F^N$ 
and $j^N= \iota_{N,\react} J^N$ defined in \eqref{eq:def.iotaFlux.diff} and \eqref{eq:def.iotaFlux.react}, respectively.
\end{proposition}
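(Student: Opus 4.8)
The plan is to treat the two fluxes separately, transfer the resulting discrete bounds to the embedded fluxes, and then invoke the de la Vall\'ee--Poussin criterion together with the Dunford--Pettis theorem to extract weakly convergent subsequences, passing the bounds to the limit by weak lower semicontinuity of convex integral functionals. The only inputs besides the rate bounds \eqref{eq:AprioriEstimatesRateDiff} and \eqref{eq:AprioriEstimatesRateReact} will be the improved integrability of Proposition~\ref{prop:integrability}.

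For the diffusive flux I would first rewrite the perspective weight via the scaling monotonicity \eqref{eq:CC.prop.f}: choosing $\lambda = 1/N \le 1$ gives
\[
\CC\left(\frac{F^N_{i,k,e}}{N}\,\middle|\,\delta_i (c^N_{i,k}c^N_{i,k+e})^{1/2}\right) \le \CC\left(F^N_{i,k,e}\,\middle|\,N^2\delta_i (c^N_{i,k}c^N_{i,k+e})^{1/2}\right).
\]
Applying the magical estimate \eqref{eq:CC.prop.d} with $q=\pcrit$ and weight $\delta_i(c^N_{i,k}c^N_{i,k+e})^{1/2}$ then yields
\[
\C\!\left(\frac{F^N_{i,k,e}}{N}\right) \le \frac{\pcrit}{\pcrit-1}\,\CC\left(F^N_{i,k,e}\,\middle|\,N^2\delta_i (c^N_{i,k}c^N_{i,k+e})^{1/2}\right) + \frac{4\delta_i^{\pcrit}}{\pcrit-1}\,(c^N_{i,k}c^N_{i,k+e})^{\pcrit/2}.
\]
Summing over $k$ and integrating in time, I would bound the first term by $K^R_\diff$ (using non-negativity of $\CC$ and the definition of $R_{N,\diff}$) and the second by the elementary inequality $(ab)^{\pcrit/2}\le \tfrac12(a^{\pcrit}+b^{\pcrit})$ together with shift invariance of the sum and the $\rmL^{\pcrit}$-bound \eqref{eq:HighInte.pcrit.a}. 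This produces $\tfrac{1}{N^d}\sum_k\int_0^T \C(F^N_{i,k,e}/N)\dd t \le C^\diff_{\mathrm{flux}}$ with $C^\diff_{\mathrm{flux}}$ depending only on $K^R_\diff$, $C_{(1)}$, $\delta_i$ and $d$.

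The reactive flux is the main obstacle: in the critical case $\tfrac12|\alpha^r+\beta^r|_1 = \pcrit$ permitted by \eqref{ass:Reactions1}, the magical estimate is useless, since controlling the error $w^q$ would force $q\le 1$. Instead I would use Lemma~\ref{lem:Superlinear} to build a single $\Phi_d\coloneqq\Xi_{\C,\psi}$ from $\phi=\C$ (admissible because $s\C'(s)\ge\C(s)$ by \eqref{eq:Cprime.bound}) and a convex, non-decreasing, superlinear weight $\psi$ with $\psi(w)\sim w(\log(1{+}w))^{\eta_d}$ as $w\to\infty$. By Lemma~\ref{lem:Superlinear} the function $\Phi_d$ is even, non-decreasing and superlinear, and it is moreover convex, being the partial infimum over $w$ of the jointly convex map $(s,w)\mapsto \CC(s\mid w)+\psi(w)$ (joint convexity of $\CC$ is \eqref{eq:CC.prop.a}). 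Testing the infimum defining $\Xi_{\C,\psi}$ with $w=\kappa_r (c^N_k)^{(\alpha^r+\beta^r)/2}$ gives the pointwise bound $\Phi_d(J^N_{r,k}) \le \CC\big(J^N_{r,k}\mid \kappa_r (c^N_k)^{(\alpha^r+\beta^r)/2}\big) + \psi\big(\kappa_r (c^N_k)^{(\alpha^r+\beta^r)/2}\big)$. After summation and time integration, the first contribution is controlled by $K^R_\react$; for the second I would use weighted AM--GM, $(c^N_k)^{(\alpha^r+\beta^r)/2}\le C_r\sum_i (c^N_{i,k})^{p_r}$ with $p_r=\tfrac12|\alpha^r+\beta^r|_1\le\pcrit$, and then monotonicity and convexity of $\psi$ to reduce to single-species terms. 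The key point is that $\psi$ is designed precisely so that $\psi\big(C (c^N_{i,k})^{\pcrit}\big)$ is dominated, up to constants, by $(c^N_{i,k})^{\pcrit}\big(\log(1{+}c^N_{i,k})\big)^{\eta_d}+1$, which is exactly the quantity bounded $N$-uniformly by the Orlicz-type estimate \eqref{eq:HighInte.pcrit.b}; subcritical reactions ($p_r<\pcrit$) are controlled a fortiori by \eqref{eq:HighInte.pcrit.a}. As there are finitely many reactions, one $\Phi_d$ depending only on $d$ suffices, and the resulting $C^\react_{\mathrm{flux}}$ depends only on $K^R_\react$, $C_{(1)}$, $C_{(2)}$ and $d$.

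To conclude I would transfer these bounds to the embedded fluxes. For the diffusive part, $f^N_{i,e}(x)=\sum_k g^N_k(x)\,(F^N_{i,k,e}/N)$ from \eqref{eq:def.iotaFlux.diff.1D} is, for each $x$, a convex combination of the values $F^N_{i,k,e}/N$ since the weights $g^N_k(x)\coloneqq\int_0^1\one_{Q^N_{k+\theta e}}(x)\dd\theta$ satisfy $\sum_k g^N_k(x)=1$; Jensen's inequality and $\int_{\bbT^d} g^N_k\dd x = N^{-d}$ then give $\iint_\OmegaT \C(f^N_{i,e})\dd x\dd t \le \tfrac{1}{N^d}\sum_k\int_0^T \C(F^N_{i,k,e}/N)\dd t \le C^\diff_{\mathrm{flux}}$. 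For the reactive part, $j^N_r$ from \eqref{eq:def.iotaFlux.react} is piecewise constant, so $\iint_\OmegaT \Phi_d(j^N_r)\dd x\dd t = \tfrac{1}{N^d}\sum_k\int_0^T\Phi_d(J^N_{r,k})\dd t \le C^\react_{\mathrm{flux}}$ with equality. Since $\C$ and $\Phi_d$ are non-negative, convex and superlinear, the de la Vall\'ee--Poussin criterion yields $\rmL^1(\OmegaT)$-equi-integrability and boundedness of $(f^N_{i,e})_N$ and $(j^N_r)_N$, whence Dunford--Pettis provides a (diagonal, not relabelled) subsequence with $f^N_{i,e}\rightharpoonup f_{i,e}$ and $j^N_r\rightharpoonup j_r$ weakly in $\rmL^1(\OmegaT)$. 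Finally, weak lower semicontinuity of the convex integral functionals $u\mapsto \iint_\OmegaT \C(u)$ and $u\mapsto\iint_\OmegaT\Phi_d(u)$ passes the bounds to the limit, giving $\iint_\OmegaT\C(f_{i,e})\le C^\diff_{\mathrm{flux}}$ and $\iint_\OmegaT\Phi_d(j_r)\le C^\react_{\mathrm{flux}}$, which completes the proof.
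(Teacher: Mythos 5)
Your proposal is correct and follows essentially the same route as the paper's proof: the magical estimate \eqref{eq:CC.prop.d} combined with the scaling monotonicity \eqref{eq:CC.prop.f} and the $\rmL^{\pcrit}$ bound \eqref{eq:HighInte.pcrit.a} for the diffusive fluxes; Lemma~\ref{lem:Superlinear} with $\phi=\C$ and a logarithmic weight matched to \eqref{eq:HighInte.pcrit.b} for the reactive fluxes; Jensen's inequality for the embedded fluxes; and de la Vall\'ee--Poussin/Dunford--Pettis together with convex lower semicontinuity for the weak limits. The only cosmetic deviations are your weight $\psi(w)\sim w\,(\log(1{+}w))^{\eta_d}$ in place of the paper's $\psi_d(w)=w\,(\log(1{+}w^{1/\pcrit}))^{1/d}$, and your convexification of $\Phi_d$ via infimal projection of the jointly convex map $(s,w)\mapsto\CC(s|w)+\psi(w)$ (which requires $\psi$ convex) rather than the paper's passage to the biconjugate $(\Xi_d)^{**}$.
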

\begin{proof} 
We consider the diffusive flux $F^N_{i,e}$ first, where we fix and then omit the indices 
$i,e$. 
We apply the magical estimate
\eqref{eq:CC.prop.d} with $q=\pcrit>1 $ to obtain
\begin{align*}
  \frac{1}{N^d}\sum_{k\in\bbZ^d_N}\int_0^T \C\Big(\frac1N F^N_k\Big) \dd t
&\leq
  \frac{1}{N^d}\sum_{k\in\bbZ^d_N}\int_0^T \Big( \frac{\pcrit}{ \pcrit{-}1}  \CC\Big(\frac1N F^N_k \Big| \sigma^N_k \Big) 
    + \frac{4}{\pcrit {-}1 } (\sigma^N_k)^2\Big) \dd t
\\
& \leq  C_{\pcrit} \frac{1}{N^d}\sum_{k\in\bbZ^d_N}\int_0^T \Big( \CC\big( F^N_k 
         \big| N^2\sigma^N_k \big) 
        + (\sigma^N_k)^{\pcrit}\Big) \dd t \\
&\leq   C_{\pcrit} K^R_\diff 
        +  C_{\pcrit}  \big\| \sigma^N\big\|^{\pcrit}_{\rmL^{\pcrit}([0,T]\times\bbZ^d_N)},
\end{align*} 
where the estimate from the second to the third line follows from the monotonicity 
\eqref{eq:CC.prop.f} and where we used that 
 $\sigma^N_{i,e} = \delta_i 
(c_{i,k}c_{i,k+e})^{1/2}$ 
is uniformly bounded in $\rmL^{\pcrit}$ by 
$C_{(1)}$ in \eqref{eq:HighInte.pcrit.a}. 

The argument for $j^N_r$ is analogous, however, we have to be aware that we now have to choose 
 $\sigma^N_r = \kappa_r (c^N)^{\gamma^r}$ 
with $\gamma^r = \frac12(\alpha^r{+}\beta^r)$. Thus,  \eqref{eq:HighInte.pcrit.a} and assumption \eqref{ass:Reactions1} only provide a uniform bound for $\sigma^N_r$ in $\rmL^1$. 
However, Lemma \ref{lem:Superlinear} can be employed on the basis of the 
improved higher regularity. We choose $\phi=\C$ and
\[
\psi_d( w)= w \,\big( \log(1{+} w^{1/\pcrit}) \big)^{1/d},
\]
which is increasing and superlinear. Thus, the function 
$\Xi_d= \Xi_{\C, \psi_d}$ is still superlinear and increasing, and the same is true
for its convex hull $\Phi_d= (\Xi_d)^{**} \leq \Xi_d$. 
 With this, fixing and omitting the index $r$, we can estimate 
\begin{align*}
    \frac{1}{N^d}\sum_{k\in\bbZ^d_N}\int_0^T \Phi_d(J^N) \dd t &\leq 
    \frac{1}{N^d}\sum_{k\in\bbZ^d_N}\int_0^T \Xi_d(J^N_k) \dd t \leq 
     \frac{1}{N^d}\sum_{k\in\bbZ^d_N}\int_0^T \big( \CC(J^N_k| \sigma^N_k) + \psi_d(\sigma^N_k) \Big) \dd t
\\
&\leq K^\react_\mathrm{flux} + \frac{1}{N^d}\sum_{k\in\bbZ^d_N}\int_0^T 
 |c^N|^{\pcrit} \big(\log(1{+}|c^N_k|\big)^{1/d} \dd t 
 \leq K^\react_\mathrm{flux} + C_{(2)}
 \end{align*} 
with $C_{(2)}$ from \eqref{eq:HighInte.pcrit.b}.

 For the embedded diffusive fluxes  $f^N=\iota_{N,\diff} F^n$, we recall that \eqref{eq:def.iotaFlux.diff.1D} involves a 
partition of unity. Therefore, it follows 
$\iint_\OmegaT \C(f^N) \dd x \dd t 
\leq \frac{1}{N^d}\sum_{k\in\bbZ^d_N}\int_0^T \C(\frac1N F^N_k) \dd t$ by an application of Jensen's inequality. Similarly, we have for the embedded reactive fluxes the estimate $\iint_\OmegaT\Phi_d(j^N) \dd x \dd t\le \frac{1}{N^d}\sum_{k\in\bbZ^d_N}\int_0^T \Phi_d(J^N_k) \dd t$.

With this, 
the criterion of de la Vall\'e Poussin shows 
that the sequences $(f^N)_N$ and $(j^N)_N$ both are sequentially compact in the weak 
topology of $\rmL^1(\OmegaT)$. Thus, a subsequence (not relabeled) and limits 
$f$ and $j$ exist such that $f^N\rightharpoonup f$ and $j^N \rightharpoonup j$.
Moreover, the convexity of $ \C$ and $\Phi_d$ implies the weak lower semi-continuities
$\int_\OmegaT \C(f_{i,e}) \dd x \dd t \leq \liminf_{N\to \infty} 
\int_\OmegaT \C(f^N_{i,e}) \dd x \dd t \leq C^\diff_\mathrm{flux}$
 and $\int_\OmegaT \Phi_d(j_r) \dd x \dd t \leq \liminf_{N\to \infty} 
\int_\OmegaT \Phi_d( j_r) \dd x \dd t \leq C^\react_\mathrm{flux}$.

With this, the proof of Proposition~\ref{prop:FluxBound} is complete.
\end{proof}

\begin{remark}
The uniform equi-integrability for the diffusive rate $\iint \C(F^n_{i,e}) \dd x \dd t \leq C^\diff_\mathrm{flux}$ was also obtained in  \cite[Lem.\,4.4]{HT23} by a slightly different and more generally applicable argument that only uses that $\sigma^N$ is uniformly bounded in $\rmL^\infty(0,T;\rmL^1(\bbT^d))$.   
\end{remark}

Having established the spatial regularity of $\tilde \rho^N$ in Lemma~\ref{lem:spatial_reg_NEW} as well as boundedness of the fluxes in Proposition~\ref{prop:FluxBound}, our next step is to show time regularity for $\tilde\rho^N$. 

\begin{lemma}[Time regularity]\label{lem:time_regularity}

Assume \eqref{ass:Reactions1} and let $(c^N,F^N,J^N)$ satisfy the a priori estimates \eqref{eq:AprioriEstimBasic}.
Then, we have 
the uniform bound $\sup_{N\in\bbN}\norm{\tilde\rho^N}_{BV(0,T;(W^{1,\infty}(X))^\ast)}<\infty$.
\end{lemma}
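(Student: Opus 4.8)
The plan is to reduce the claim to a uniform bound on the time variation of $t\mapsto\tilde\rho^N(t)$ tested against functions $\varphi\in\rmC^1(X)$ with $\norm{\varphi}_{W^{1,\infty}}\le 1$, exploiting the exact compatibility of the embeddings with the continuity equation from Lemma~\ref{lem:CE.DiscCont}. Writing $\rho^N=\iota_N c^N$, the piecewise-constant embedding satisfies $\partial_t\rho^N=-\div f^N+\nablaCReact^\ast j^N$ in the sense of distributions, with $f^N=\iota_{N,\diff}F^N$ and $j^N=\iota_{N,\react}J^N$ bounded in $\rmL^1(\OmegaT)$ by Proposition~\ref{prop:FluxBound}. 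Hence $\langle\partial_t\rho^N,\varphi\rangle=\langle f^N,\nabla\varphi\rangle+\langle j^N,\nablaCReact\varphi\rangle$, so $\rho^N$ is uniformly bounded in $W^{1,1}(0,T;(W^{1,\infty}(X))^\ast)\subset BV(0,T;(W^{1,\infty}(X))^\ast)$. The task therefore reduces to controlling the time variation of the interpolation defect $\tilde\rho^N-\rho^N$.

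On each cube $Q^N_k$ I would use the elementary identity (valid because $\sum_{m\in M}\fff^N_m\equiv1$ on $Q^N_0$)
\begin{align*}
\Big(\sum_{m} U^N_{k+m}\fff^N_m\Big)^2-\sum_m (U^N_{k+m})^2\fff^N_m=-\tfrac12\sum_{m,m'}\big(U^N_{k+m}-U^N_{k+m'}\big)^2\fff^N_m\fff^N_{m'}
\end{align*}
to split $\tilde\rho^N_i-\rho^N_i$ into a part that is affine in $c^N$ and a genuinely nonlinear commutator $D^N_i=-\tfrac{\omega_i}2\sum_{m,m'}(U^N_{k+m}-U^N_{k+m'})^2\fff^N_m\fff^N_{m'}$. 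The affine part is handled exactly as $\rho^N$ through the continuity equation; here the only subtlety is that $\omega$ is merely continuous, which I would absorb using the zero-average property $\int_{Q^N_k}(\omega_i-w^N_{i,k})\dd x=0$, so that pairing against $\varphi$ produces the oscillation of $\varphi$ on $Q^N_k$ and an extra factor $N^{-1}\norm{\nabla\varphi}_\infty$. For the commutator one has the pointwise bound $\abs{D^N}\lesssim N^{-2}$ times the integrand of $S_{N,\diff}$, whence $\norm{D^N}_{\rmL^1(\OmegaT)}\lesssim N^{-2}K^S_\diff\to0$ by \eqref{eq:AprioriEstimatesSlopeDiff}; this already shows $\tilde\rho^N-\rho^N\to0$ in $\rmL^1$, but the $BV$ bound requires estimating $\partial_t D^N$.

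Differentiating gives $\partial_t D^N_i\sim\omega_i\sum_{m,m'}(U^N_{k+m}-U^N_{k+m'})(\dot U^N_{k+m}-\dot U^N_{k+m'})\fff^N_m\fff^N_{m'}$. Testing against $\varphi$ and applying Cauchy--Schwarz in the grid variable separates the first-differences of $U^N$ -- controlled by $S_{N,\diff}$ and producing the crucial gain $N^{-1}(S_{N,\diff})^{1/2}$ -- from the first-differences of $\dot U^N$. Substituting $\dot U^N_{i,k}=\dot c^N_{i,k}/(2w^N_{i,k}U^N_{i,k})$ and the continuity equation expresses the latter through the fluxes $F^N,J^N$.

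The main obstacle is precisely this last factor: a naive estimate produces the discrete Fisher-type quantity $N^{-d}\sum_k\abs{\dot c^N_{i,k}}^2/c^N_{i,k}$, which is \emph{not} controlled by the cosh-type dissipation -- by Lemma~\ref{lem:PropertiesC} the rate only grows like $\abs{F}\log\abs{F}$, not $\abs{F}^2$ -- and which blows up where $c^N$ is small. The resolution, and the technical heart of the lemma, is to never form $\dot c^2/c$ but to keep the flux structure: where $c^N_{i,k}$ is small, the penalty $\CC\big(F^N_{i,k,e}\mid N^2\delta_i(c^N_{i,k}c^N_{i,k+e})^{1/2}\big)$ forces $F^N$ to be correspondingly small, so that the apparent singularity of $\dot U^N$ stays integrable. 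Concretely I would combine the quadratic-near-zero behaviour of $\C$ with the equi-integrability bounds on $\C(F^N/N)$ and $\Phi_d(J^N)$ from Proposition~\ref{prop:FluxBound} and the higher integrability $C_{(1)},C_{(2)}$ from Proposition~\ref{prop:integrability}, via a Fenchel--Young splitting adapted to $\C$ rather than a quadratic one; the spatial gain $N^{-1}$ from the slope then exactly compensates the growth of the diffusive and reactive flux contributions. Collecting the constants $K^S_\diff,K^R_\diff,K^R_\react,C_{(1)},C_{(2)}$ and integrating in time by Cauchy--Schwarz against $\int_0^T S_{N,\diff}\dd t\le K^S_\diff$ yields the uniform bound $\sup_{N\in\bbN}\norm{\tilde\rho^N}_{BV(0,T;(W^{1,\infty}(X))^\ast)}<\infty$.
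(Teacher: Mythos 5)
Your Step 1 (the BV bound for $\rho^N=\iota_N c^N$ via the embedded continuity equation and the flux bounds of Proposition~\ref{prop:FluxBound}) is sound, but the reduction to the interpolation defect $\tilde\rho^N-\rho^N$ creates a problem that your proposal does not solve. Bounding the variation of the commutator $D^N$ in time forces you to differentiate $U^N$ in time, i.e.\ to work with $\dot U^N_{i,k}=\dot c^N_{i,k}/(2 w^N_{i,k}U^N_{i,k})$, and, as you yourself note, Fisher-type quantities $|\dot c^N|^2/c^N$ are not controlled by the cosh dissipation. Your proposed rescue is not a proof, and for the reactive contribution I believe it is false: the bound on $\CC\big(J^N_{r,k}\,\big|\,\kappa_r (c^N_k)^{(\alpha^r+\beta^r)/2}\big)$ forces $J^N_{r,k}$ to be only \emph{logarithmically} small as the weight degenerates (since $\CC(j|w)\approx |j|\log(|j|/w)$ for $w\ll|j|$), and the weight involves \emph{all} species, so $J^N_{r,k}/\sqrt{c^N_{i,k}}$ can blow up with bounded dissipation (let $c^N_{i,k}\to0$ while the other species stay of order one, e.g.\ for $\rmX_1\rightleftharpoons\rmX_2$). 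Moreover, the factor $(U^N_{k+m}-U^N_{k+m'})$ produces the gain $N^{-1}$ only in an $\ell^2$ sense, so it can only compensate a quantity lying in $\ell^2$ at scale $N$; the reactive part of $\dot U^N$ has no spatial-difference structure whatsoever and lies in no controlled space. The assertion that ``the spatial gain $N^{-1}$ exactly compensates the growth of the diffusive and reactive flux contributions'' is therefore the decisive unproved step, and the argument is incomplete precisely there.

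The paper's proof shows that this difficulty is self-inflicted: no time derivative of $\tilde\rho^N$, hence no $\dot U^N$, is ever formed. Since the BV norm is a supremum over partitions of sums of increments, one writes, for $\varphi\in\rmC^1(X)$,
\begin{align*}
\sum_{m=1}^M \langle \tilde\rho^N(t_m)-\tilde\rho^N(t_{m-1}),\varphi\rangle
=\sum_{m=1}^M \langle c^N(t_m)-c^N(t_{m-1}),\tilde\iota_N^\ast\varphi\rangle_N
=\sum_{m=1}^M\int_{t_{m-1}}^{t_m}\!\big(\langle F^N,\overline\nabla\tilde\iota_N^\ast\varphi\rangle
+\langle J^N,\Gamma\tilde\iota_N^\ast\varphi\rangle\big)\dd t ,
\end{align*}
using the \emph{integrated} discrete continuity equation, so that only the fluxes appear, and they appear linearly. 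The elementary bounds $|(\overline\nabla\tilde\iota_N^\ast\varphi)_{i,k,e}|\leq C\norm{\nabla\varphi}_{\rmL^\infty}/N$ (this is where the factor $1/N$ arises, matching the $\rmL^1$ bound on $F^N/N$) and $|(\Gamma\tilde\iota_N^\ast\varphi)_{r,k}|\leq C\norm{\varphi}_{\rmL^\infty}$ then bound every partition sum by $C\norm{\varphi}_{\rmC^1}$ times the uniform $\rmL^1$ flux bounds of Proposition~\ref{prop:FluxBound}. In other words, your Step 1 already contains the entire mechanism: run it with the discretization operator $\tilde\iota_N^\ast$ dual to $\tilde\iota_N$ instead of $\iota_N^\ast$, and the lemma follows with no defect estimate at all. (One caveat: this pairing treats $\tilde\rho^N$ through the linear interpolation of $c^N$, i.e.\ it uses $\langle\tilde\iota_N\,\cdot\,,\varphi\rangle=\langle\cdot\,,\tilde\iota_N^\ast\varphi\rangle_N$ applied to $c^N$, which sidesteps the quadratic structure of definition \eqref{eq:Def.tilde.rho.N} rather than differentiating it; your instinct that the nonlinearity needs attention is understandable, but the paper's route deliberately avoids ever confronting it.)
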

\begin{proof}
    We first focus on the more complicated interpolation $\tilde\iota_N$ and recall \eqref{eq:HSDI}, which implies for every $\varphi\in \rmC^1(X)$
    \begin{align*}
        \langle F^N,\overline\nabla\tilde\iota_N^\ast \varphi \rangle 
        &= \frac{1}{N^d}\sum_{k\in\bbZ_N^d}\sum_{e\in \Eu}\sum_{i\in I} F^N_{i,k,e} [(\tilde\iota_N^\ast \varphi)_{i,k+e}-(\tilde\iota_N^\ast \varphi)_{i,k}]\\
        &= \sum_{k\in\bbZ_N^d}\sum_{e\in \Eu}\sum_{i\in I} \sum_{m\in M} F^N_{i,k,e} \int_{Q^N_k} \shift^N_{-k} \fff^N_m \shift^N_{-m}[\shift^N_e\varphi_i-\varphi_i]\dd x\\
        &= \sum_{k\in\bbZ_N^d}\sum_{e\in \Eu}\sum_{i\in I} \sum_{m\in M} \frac{F^N_{i,k,e}}{N} \int_{Q^N_k}\int_0^1 \shift^N_{-k} \fff^N_m(x) \shift^N_{e\theta-m}\partial_{x_e}\varphi_i(x) \dd\theta\dd x\\
        &\le C_M \norm{\nabla\varphi}_{\rmL^\infty(X)} \sum_{i\in I}\sum_{e\in \Eu}\norm[\bigg]{\frac{F_{i,e}^N}{N}}_1.
    \end{align*}
    Similarly, for every $\varphi\in \rmC(X)$ we have
    \begin{align*}
        \langle J^N,\Gamma\tilde\iota_N^\ast \varphi \rangle 
        &= \sum_{k\in\bbZ_N^d} \sum_{r\in R} J^N_{r,k} \sum_{i\in I} \gamma_{r,i}
        \sum_{m\in M} \int_{Q^N_k}\shift^N_{-k} \fff^N_m \shift^N_{-m}\varphi_i \dd x \\
        &\le \norm{\varphi}_{\rmL^\infty(X)}\max_{s\in R}\sum_{i\in I} 
        \abs{\gamma_{s,i}}\sum_{r\in R}\norm{J^N_r}_1.
    \end{align*}
    With this, we consider any partition $(t_m)_{m=0}^M$, $M\in\bbN$ of $[0,T]$. 
    Then, the previous bounds, the discrete continuity equation, and 
    Proposition~\ref{prop:FluxBound} yield, for every $\varphi\in \rmC^1(X)$,
    the estimate 
    \begin{align*}
         \sum_{m=1}^M \langle \tilde\rho^N(t_m)
           -\tilde\rho^N(t_{m-1}), \varphi\rangle
        &= \sum_{m=1}^M \langle c^N(t_m)-c^N(t_{m-1}),
              \tilde\iota_N^\ast\varphi\rangle_N\\
        &= \sum_{m=1}^M\int_{t_{m-1}}^{t_m}
        \langle F^N,\overline\nabla\tilde\iota_N^\ast \varphi \rangle 
        + \langle J^N,\Gamma\tilde\iota_N^\ast \varphi \rangle \dd t
        \le C\norm{\varphi}_{\rmC^1(X)}.
    \end{align*}
    Taking suprema with respect to $\varphi\in\set{\varphi\in \rm\rmC^1(X)$: 
    $\norm{\varphi}_{\mathrm{W}^{1,\infty}(X)}\le 1}$ and the partition, we obtain 
    $\sup_N\norm{\tilde\rho^N}_{\mathrm{BV}(0,T;(\mathrm{W}^{1,\infty})^\ast)}
    <\infty$. 
    This finishes the proof of Lemma~\ref{lem:time_regularity}.
\end{proof}

Combining the spatial regularity of $\tilde u$ from Lemma~\ref{lem:spatial_reg_NEW} and the time regularity of $\tilde\rho$ from Lemma~\ref{lem:time_regularity}, we are now able to apply the Aubin-Lions-Simon lemma to obtain strong $\rmL^1$ compactness for $\tilde\rho^N$. We then show that $\rho^N$ has the same strong limit by comparing it to $\tilde\rho^N$.

\begin{proposition}[Strong compactness]\label{prop:strong_compactness}
    Assume \eqref{ass:Reactions1} and let $(c^N,F^N,J^N)$ satisfy the a priori estimates \eqref{eq:AprioriEstimBasic}.
    
    Then, there exists $\rho\in  \rmL^{\pcrit }([0,T]\times X)$ such that along a (not renamed) subsequence
    both $\tilde\rho^N \to \rho$ strongly in $\rmL^1([0,T]\times X)$ and $\rho^N \to \rho$ strongly in $\rmL^1([0,T]\times X)$.

    Furthermore, it holds $\rho\in\AC([0,T];\calM_+(X)$ and $\rho^N(t)\rightharpoonup^\ast\rho(t)$ weakly-$^\ast$ in $\calM_+(X)$ for all $t\in[0,T]$.
\end{proposition}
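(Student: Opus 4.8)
\emph{Strategy.} The plan is to obtain strong $\rmL^1(\OmegaT)$ compactness for the smooth interpolants $\tilde\rho^N$ by an Aubin--Lions--Simon argument, to transfer it to the piecewise-constant embeddings $\rho^N$ by showing $\norm{\rho^N-\tilde\rho^N}_{\rmL^1(\OmegaT)}\to 0$, and finally to upgrade the resulting a.e.-in-time convergence to pointwise weak-$^\ast$ convergence for every $t$. The delicate point throughout is that the genuine spatial regularity lives on $\tilde u^N$, whereas the time regularity from Lemma~\ref{lem:time_regularity} is available only for the nonlinearly related, $\omega$-weighted quantity $\tilde\rho^N=\omega(\tilde u^N)^2$ with $\omega$ merely continuous; reconciling these is the main obstacle.

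\emph{Step 1 (compactness of $\tilde\rho^N$).} From Lemma~\ref{lem:spatial_reg_NEW} and the mass bound $K^E_{\rmL^1}$ the interpolants $\tilde u^N$ are bounded in $\rmL^2(0,T;\rmH^1(\bbT^d))\cap\rmL^\infty(0,T;\rmL^2(\bbT^d))$. The chain rule $\nabla\big((\tilde u^N)^2\big)=2\tilde u^N\nabla\tilde u^N$ with Cauchy--Schwarz in space and time then bounds $(\tilde u^N)^2$ uniformly in $\rmL^1(0,T;\mathrm{W}^{1,1}(\bbT^d))$. Since $\mathrm{W}^{1,1}(\bbT^d)\hookrightarrow\hookrightarrow\rmL^1(\bbT^d)$ and $\omega_*\le\omega\le\omega^*$ with $\omega$ continuous, the family $\tilde\rho^N=\omega(\tilde u^N)^2$ has equi-controlled spatial translations in $\rmL^1$ (here the translation formulation of compactness is what lets us tolerate the mere continuity of $\omega$). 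Combined with the uniform time regularity $\tilde\rho^N\in\mathrm{BV}(0,T;(\mathrm{W}^{1,\infty}(X))^\ast)$ from Lemma~\ref{lem:time_regularity}, the Aubin--Lions--Simon lemma yields a subsequence and a limit $\rho\ge 0$ with $\tilde\rho^N\to\rho$ strongly in $\rmL^1(\OmegaT)$, and Proposition~\ref{prop:integrability} together with weak lower semicontinuity of the $\rmL^{\pcrit}$-norm gives $\rho\in\rmL^{\pcrit}(\OmegaT)$.

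\emph{Step 2 (transfer to $\rho^N$).} On each cube $Q^N_k$ we have $\rho^N=w^N_{i,k}(U^N_{i,k})^2$ and $\tilde\rho^N=\omega(\tilde u^N)^2$, hence
\begin{align*}
\rho^N-\tilde\rho^N=(w^N_{i,k}-\omega)\,(U^N_{i,k})^2+\omega\big((U^N_{i,k})^2-(\tilde u^N)^2\big).
\end{align*}
The first term tends to $0$ in $\rmL^1$ since $\sup_k|w^N_{i,k}-\omega|\to0$ by uniform continuity of $\omega$ while $\frac1{N^d}\sum_k(U^N_{i,k})^2\le K^E_{\rmL^1}/\omega_*$. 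For the second we use $|a^2-b^2|\le|a-b|(|a|+|b|)$, Cauchy--Schwarz, a cell-wise Poincar\'e estimate $\norm{\tilde u^N-U^N_{\mathrm{pc}}}_{\rmL^2}\le (C/N)\norm{\nabla\tilde u^N}_{\rmL^2}$ (with $U^N_{\mathrm{pc}}$ the piecewise-constant function equal to $U^N_{i,k}$ on $Q^N_k$), and Lemma~\ref{lem:spatial_reg_NEW} with the energy bound, giving a contribution of order $1/N$. Thus $\norm{\rho^N-\tilde\rho^N}_{\rmL^1(\OmegaT)}\to0$, so $\rho^N\to\rho$ strongly in $\rmL^1(\OmegaT)$, and the $\rmL^{\pcrit}$-bound transfers to $\rho$.

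\emph{Step 3 (pointwise weak-$^\ast$ convergence and absolute continuity).} For every $t$ the uniform mass bound $\norm{\rho^N(t)}_{\calM(X)}=\norm{c^N(t)}_{\rmL^1_N}\le K^E_{\rmL^1}$ makes each $\{\rho^N(t)\}_N$ weak-$^\ast$ sequentially precompact in $\calM_+(X)$, while Lemma~\ref{lem:time_regularity} supplies a uniform modulus of continuity in the metrizable $(\mathrm{W}^{1,\infty}(X))^\ast$-topology. A Helly/Arzel\`a--Ascoli selection in the weak-$^\ast$ topology then extracts a further subsequence with $\rho^N(t)\rightharpoonup^\ast\mu(t)$ for every $t\in[0,T]$; the strong $\rmL^1(\OmegaT)$ limit forces $\mu(t)=\rho(t)$ for a.e.\ $t$, and we take this $\mu$ as the representative of $\rho$, which yields the claimed convergence for all $t$. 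Finally, by Lemma~\ref{lem:CE.DiscCont} the triples $(\rho^N,f^N,j^N)$ lie in $\CE$; passing to the limit with the strong convergence of $\rho^N$ and the weak $\rmL^1$ convergence of the fluxes from Proposition~\ref{prop:FluxBound} gives $(\rho,f,j)\in\CE$ with $f,j\in\rmL^1$, and the continuity equation then upgrades $\rho$ to $\AC([0,T];\calM_+(X))$ with respect to the narrow topology. The promotion of the a.e.-$t$ statement to every $t$ via this selection is the one remaining technical subtlety beyond the compactness step.
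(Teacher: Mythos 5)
Your proof is correct and follows the same overall skeleton as the paper's (Aubin--Lions--Simon compactness for $\tilde\rho^N$, an $O(1/N)$ transfer estimate to $\rho^N$, then pointwise-in-time identification), but each step is executed by a genuinely different device. In Step 1 the paper applies Simon's lemma with the ``nonlinear'' compact set $Z=\{\omega u^2: u\in\rmH^1(X)\}\Subset\rmL^1(X)$, whereas you bound $(\tilde u^N)^2$ in $\rmL^1(0,T;\mathrm{W}^{1,1}(\bbT^d))$ via the chain rule and Cauchy--Schwarz and handle the merely continuous weight $\omega$ through translation equicontinuity; both are legitimate, and your route makes the treatment of the low regularity of $\omega$ more explicit. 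In Step 2 the paper proves the $O(1/N)$ bound by the algebraic factorization $(\sum_k\hhh^N_kU_k)^2-(\sum_k\one_{Q^N_k}U_k)^2=(\sum_k(\hhh^N_k{+}\one_{Q^N_k})U_k)(\sum_k(\hhh^N_k{-}\one_{Q^N_k})U_k)$ followed by H\"older and the discrete slope bound $K^S_\diff$; your cell-wise Poincar\'e inequality for the multilinear interpolant (valid since $\tilde u^N-U^N_{i,k}$ is a multilinear polynomial vanishing at the corner $k/N$, so the scaled inequality holds on the finite-dimensional space) combined with Lemma~\ref{lem:spatial_reg_NEW} yields the same rate and is arguably cleaner. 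In Step 3 the paper obtains $\rho^N(t)\rightharpoonup^\ast\rho(t)$ for all $t$ from the flux equi-integrability of Proposition~\ref{prop:FluxBound}, arguing as in \cite[Lem.\,4.5]{HT23} (equicontinuity in a weak-$^\ast$ metric plus Arzel\`a--Ascoli), whereas you use a Helly selection based on the uniform BV bound; this also works, since the mass bound gives weak-$^\ast$ compact balls and BV in the $(\mathrm{W}^{1,\infty})^\ast$-norm bounds the variation with respect to a metric for the weak-$^\ast$ topology. Two imprecisions you should fix: a BV bound is \emph{not} ``a uniform modulus of continuity'' (BV curves may jump), so only the Helly half of your ``Helly/Arzel\`a--Ascoli'' alternative is actually available to you; and Lemma~\ref{lem:time_regularity} is stated for $\tilde\rho^N$, while your selection argument needs the same bound for $\rho^N$ --- this holds by repeating the computation with $\iota_N^\ast$ in place of $\tilde\iota_N^\ast$ (the simpler case the paper's proof alludes to), but it should be said. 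Your closing derivation of $\rho\in\AC([0,T];\calM_+(X))$ from the limit continuity equation with $\rmL^1$ fluxes is sound and matches in substance what the paper delegates to Lemma~\ref{lem:CE_closed}.
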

\begin{proof}

Lemma~\ref{lem:spatial_reg_NEW} implies that $\tilde\rho^N \in \rmL^1([0,T];Z)$, where $Z\coloneqq \{\omega u^2: u\in \rmH^1(X)\}$. Since $\omega \in \rmL^\infty(X)$, we have the compact embedding $Z \Subset \rmL^1(X)$. Combining this with Lemma~\ref{lem:time_regularity}, we obtain the 
existence of $\rho\in \rmL^1([0,T]\times X)$ such that $\tilde \rho^N\to \rho$ strongly in $\rmL^1([0,T]\times X)$ 
by applying the Aubin-Lions-Simon Lemma, \cite[Theorem 5]{simon1986compact}.

For the convergence of $\rho^N$, we compare it to $\tilde\rho^N$ and recall that $U^N_k = \sqrt{\frac{c^N_k}{w^N_k}}$ to derive 
\begin{align}
    \notag
    \norm{\tilde\rho^N-\rho^N}_{L^1([0,T]\times X)} 
    &= \norm[\bigg]{\omega\bra[\bigg]{\tilde\iota_N\sqrt{\frac{c^N}{w^N}}}^2-\iota_N c^N}_{L^1([0,T]\times X)}\\
    \notag
    &\le \omega^\ast\norm[\bigg]{\bra[\bigg]{\tilde\iota_N\sqrt{\frac{c^N}{w^N}}}^2-\iota_N \frac{c^N}{w^N}}_{L^1([0,T]\times X)}
    + \norm[\bigg]{\omega\iota_N \frac{c^N}{w^N} - \iota_N c^N}_{L^1([0,T]\times X)}\\
    \notag
    &\le \omega^\ast\norm[\big]{(\tilde\iota_N U^N_k)^2-\iota_N(U^N_k)^2}_{L^1([0,T]\times X)}
    + \frac{K^E_{\rmL^1}}{\omega_\ast}\norm[\big]{\omega - \iota_N w^N}_{L^\infty([0,T]\times X)}.
\end{align}
The second summand on the right-hand side vanishes as $N\to\infty$ by \eqref{eq:ConvergenceRefMeas}.
To control the first summand, we first employ \eqref{eq:Props.fff.hhh} to obtain for all $x\in\bbT^d$ the auxiliary inequality
\begin{align*}
    \bra[\Bigg]{\sum_{k\in\bbZ^d_N} (\hhh^N_k \pm \one_{Q^N_k})U^N_k}^2
    &= \sum_{k,l\in\bbZ^d_N}\sum_{m,n\in M}  (\shift^N_{m-k}\fff^N_m(x) \pm \shift^N_{-k}\fff^N_m(x))(\shift^N_{n-l}\fff^N_n(x) \pm \shift^N_{-l}\fff^N_n(x)) U^N_k U^N_l\\
    &= \sum_{k,l\in\bbZ^d_N}\sum_{m,n\in M} \shift^N_{-k}\fff^N_m(x) \shift^N_{-l}\fff^N_n(x) ( U^N_{k+m} \pm U^N_k) ( U^N_{l+n} \pm U^N_l)\\
    &= \sum_{k\in\bbZ^d_N}\sum_{m,n\in M} \shift^N_{-k}\fff^N_m(x) \shift^N_{-k}\fff^N_n(x) ( U^N_{k+m} \pm U^N_k) ( U^N_{k+n} \pm U^N_k)\\
    &\le 2^d \sum_{k\in\bbZ^d_N}\sum_{m,n\in M} \one_{Q^N_k}(x)  ( U^N_{k+m} \pm U^N_k) ( U^N_{k+n} \pm U^N_k)\\
    &\le 2^d \sum_{k\in\bbZ^d_N}\sum_{m\in M}\one_{Q^N_k}(x)\abs{U^N_{k+m} \pm U^N_k}^2.
\end{align*}
We combine this with Hölder's inequality to find
\begingroup
\allowdisplaybreaks
\begin{align*}
    \norm[\big]{(\tilde\iota_N U^N_k)^2-\iota_N(U^N_k)^2}&_{L^1([0,T]\times X)} 
    = \norm[\Bigg]{\bra[\Bigg]{\sum_{k\in\bbZ^d_N}\hhh^N_k U^N_k}^2-\sum_{k\in\bbZ^d_N}\one_{Q^N_k}(U^N_k)^2}_{L^1([0,T]\times X)}\\
    {}&\phantom{_{L^1([0,T]\times X)}}= \norm[\Bigg]{\bra[\Bigg]{\sum_{k\in\bbZ^d_N}\hhh^N_k U^N_k}^2-\bra[\Bigg]{\sum_{k\in\bbZ^d_N}\one_{Q^N_k}U^N_k}^2}_{L^1([0,T]\times X)}\\
    &\le \norm[\Bigg]{\sum_{k\in\bbZ^d_N} (\hhh^N_k + \one_{Q^N_k})U^N_k}_{L^2([0,T]\times X)}^{\frac{1}{2}}\norm[\Bigg]{\sum_{k\in\bbZ^d_N} (\hhh^N_k - \one_{Q^N_k})U^N_k}_{L^2([0,T]\times X)}^{\frac{1}{2}}\\
    &\le 2^d \norm[\Bigg]{\sum_{m\in M} (U^N_{k+m}+U^N_k)}_{L^2([0,T]\times X_N)}^{\frac{1}{2}} \norm[\Bigg]{\sum_{m\in M} (U^N_{k+m}-U^N_k)}_{L^2([0,T]\times X_N)}^{\frac{1}{2}}\\
    &\le C_M\sqrt{\frac{K^E_{\rmL^1} K^s_\diff}{\omega_\ast}\frac{K^S_\diff}{\delta_\ast \omega_\ast}}\frac{1}{N},
\end{align*}
\endgroup
where in the last step we used that each $m\in M$ is a sum of finitely many $d$-dimensional unit vectors, estimated the first factor using \eqref{eq:AprioriEstimBEnergy}, and extracted the power $1/N$ from the second factor by estimating with the uniform bound \eqref{eq:AprioriEstimatesSlopeDiff} and Definition~\ref{def:slope_disc}.

Moreover, by Proposition~\ref{prop:integrability} the curves $\rho^N$ are 
$N$-uniformly bounded in $\rmL^{\pcrit }([0,T]\times X)$, so is the limit $\rho$. This concludes the proof of Proposition~\ref{prop:strong_compactness}.

The weak-$^\ast$ convergence $\rho^N(t)\rightharpoonup^\ast\rho(t)$ for all $t\in[0,T]$ follows from the bounds in Proposition~\ref{prop:FluxBound} by arguing analogously to \cite[Lemma 4.5]{HT23}.

\end{proof}

To later obtain a lower limit inequality for the dissipation functionals, we must ensure that the limit objects from Proposition~\ref{prop:FluxBound} and Proposition~\ref{prop:strong_compactness} satisfy the continuity equation. This fact is established in the following lemma:
\begin{lemma}[Closedness of $\CE$]\label{lem:CE_closed}
    Let $(\rho,j,f)$ be a limit of $(\rho^N,j^N,f^N)_{N\in\bbN}\subset\CE$ in the sense of Propositions~\ref{prop:strong_compactness} and \ref{prop:FluxBound}. Then, it holds $(\rho,j,f)\in\CE$. 
\end{lemma}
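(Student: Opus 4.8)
The plan is to pass to the limit in the distributional form of the continuity equation. By Lemma~\ref{lem:CE.DiscCont}, the hypothesis $(\rho^N,j^N,f^N)\in\CE$ is equivalent to $(c^N,F^N,J^N)\in\lCE_N$, where $\rho^N=\iota_N c^N$, $f^N=\iota_{N,\diff}F^N$, and $j^N=\iota_{N,\react}J^N$. Hence, fixing a spatial test function $\varphi\in\rmC^1(X)$ and a temporal test function $\psi\in\rmC^1_c((0,T))$, and integrating the identity of Definition~\ref{def:CE_cont} by parts in time, each prelimit triple satisfies
\[
-\int_0^T \dot\psi(t) \int_{\bbT^d} \sum_{i\in I} \varphi_i \rho^N_i \dd x \dd t = \int_0^T \psi(t) \Big( \int_{\bbT^d} \sum_{i\in I}\sum_{e\in\Eu} (\nablaCDiff\varphi)_{i,e}\, f^N_{i,e} \dd x + \int_{\bbT^d} \sum_{r\in R} (\nablaCReact\varphi)_r\, j^N_r \dd x \Big) \dd t .
\]
The task is then to show that each term converges to its natural counterpart built from $(\rho,f,j)$ as $N\to\infty$.

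For the left-hand side I would invoke Proposition~\ref{prop:strong_compactness}, which gives $\rho^N\to\rho$ strongly in $\rmL^1([0,T]\times X)$; since $(t,x)\mapsto\dot\psi(t)\varphi_i(x)$ lies in $\rmL^\infty$, the product converges in $\rmL^1$ and the left-hand side tends to $-\int_0^T \dot\psi \int_{\bbT^d}\sum_i\varphi_i\rho_i\dd x\dd t$. For the right-hand side I would use the weak convergences $f^N_{i,e}\rightharpoonup f_{i,e}$ and $j^N_r\rightharpoonup j_r$ in $\rmL^1(\OmegaT)$ from Proposition~\ref{prop:FluxBound}. The relevant test fields $(t,x)\mapsto\psi(t)(\nablaCDiff\varphi)_{i,e}(x)$ and $(t,x)\mapsto\psi(t)(\nablaCReact\varphi)_r(x)$ are continuous and bounded, hence belong to $\rmL^\infty(\OmegaT)=(\rmL^1(\OmegaT))^\ast$, so the duality pairings converge and the right-hand side passes to the limit with $f_{i,e}$ and $j_r$ in place of $f^N_{i,e}$ and $j^N_r$.

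Passing to the limit in the displayed identity therefore shows that $(\rho,f,j)$ satisfies the weak continuity equation for all $\varphi\in\rmC^1(X)$ and $\psi\in\rmC^1_c((0,T))$, which is precisely the distributional content of Definition~\ref{def:CE_cont}. Together with the absolute continuity $\rho\in\AC([0,T];\calM_+(X))$ from Proposition~\ref{prop:strong_compactness} and the integrability $f\in\rmL^1(\OmegaT)$, $j\in\rmL^1(\OmegaT)$ from Proposition~\ref{prop:FluxBound}, this yields $(\rho,f,j)\in\CE$ and completes the proof. The argument is essentially routine; the only point requiring genuine care is that the limit passage in the flux terms relies on the \emph{weak-$\rmL^1$} convergence of $f^N$ and $j^N$ paired against bounded test fields. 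This structural match is exactly what the equi-integrability established in Proposition~\ref{prop:FluxBound} provides, and it is the reason we cannot merely work with the weaker weak-$^\ast$ convergence of the fluxes as measures.
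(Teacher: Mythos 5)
Your proof is correct and follows essentially the same route as the paper: the paper's own proof is a one-sentence observation that $\CE$ is, by its distributional definition, closed under precisely the convergences supplied by Propositions~\ref{prop:strong_compactness} and \ref{prop:FluxBound}, and your argument simply spells out that limit passage in the weak formulation (strong/weak-$^\ast$ convergence of $\rho^N$ against bounded test fields on the left, weak-$\rmL^1$ convergence of $f^N$ and $j^N$ against the bounded continuous fields $\psi\,\nablaCDiff\varphi$ and $\psi\,\nablaCReact\varphi$ on the right), together with the already-established memberships $\rho\in\AC([0,T];\calM_+(X))$ and $f,j\in\rmL^1$.
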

\begin{proof}

     By definition the set $\CE$ is closed with respect to the weak-$\rmL^1$ convergence of the 
    time-integrated embedded fluxes shown in Proposition~\ref{prop:FluxBound} and the pointwise-in-time weak-$^\ast$ convergence of the embedded concentrations shown in Proposition~\ref{prop:strong_compactness}.
\end{proof}

To prove the lower limit inequality for the slopes, we will employ a convergence result for the differences of the piecewise constantly embedded concentrations. This is established next.
\begin{proposition}[Convergence of differences]\label{prop:convergence_of_derivatives}
    Assume \eqref{ass:Reactions1} and let $(c^N,F^N,J^N)$ satisfy the a priori estimates \eqref{eq:AprioriEstimBasic}.      
    Let $\rho$ be the limit of $(\iota_N c^N)_N$ from Proposition \ref{prop:strong_compactness}.
    Recalling $U^N_{i,k}= (c_{i,k}/w^N_{i,k})^{1/2}$, we introduce $u^N_i = \iota_N U^N_i$, $u\coloneqq ((\rho_i/\omega_i)^{1/2})_i$, and $\nabla_N u^N \coloneqq \frac1N\sum_{e\in\Eu}\bra{\shift^N_eu^N-u^N}e$.
    
    Then, it holds $u\in \rmL^2(0,T; \rmH^1(\bbT^d))$,
    $u^N\to u$ strongly in $\rmL^2([0,T]\times X)$ 
    and along a (not renamed) subsequence $\nabla_N u^N\rightharpoonup \nabla u$ weakly in $\rmL^2([0,T]\times X)$.
\end{proposition}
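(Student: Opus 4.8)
The plan is to route the whole argument through the more regular interpolant $\tilde u^N=\tilde\iota_N U^N$, for which Lemma~\ref{lem:spatial_reg_NEW} already supplies an $N$-uniform gradient bound, and only afterwards transfer the conclusions to the piecewise-constant object $u^N=\iota_N U^N$ and to its discrete gradient $\nabla_N u^N$. First I would prove $u\in\rmL^2(0,T;\rmH^1(\bbT^d))$. The energy bound \eqref{eq:AprioriEstimBEnergy} gives $\|\tilde u^N(t)\|_{\rmL^2}^2\le K^E_{\rmL^1}/\omega_*$, while Lemma~\ref{lem:spatial_reg_NEW} bounds $\|\nabla\tilde u^N\|_{\rmL^2(\OmegaT)}$, so $\tilde u^N$ is $N$-uniformly bounded in $\rmL^2(0,T;\rmH^1(\bbT^d))$. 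By Proposition~\ref{prop:strong_compactness}, $\tilde\rho^N=\omega(\tilde u^N)^2\to\rho$ strongly in $\rmL^1(\OmegaT)$, and since $\omega_*\le\omega\le\omega^*$ this gives $(\tilde u^N)^2\to u^2$ strongly in $\rmL^1$. The Riesz--Scheff\'e upgrade (subsequential a.e.\ convergence together with $\|\tilde u^N\|_{\rmL^2}^2=\|(\tilde u^N)^2\|_{\rmL^1}\to\|u^2\|_{\rmL^1}=\|u\|_{\rmL^2}^2$) then yields strong convergence $\tilde u^N\to u$ in $\rmL^2(\OmegaT)$. Combining strong $\rmL^2$-convergence with the $N$-uniform gradient bound, weak closedness of the distributional gradient gives $u\in\rmL^2(0,T;\rmH^1(\bbT^d))$ and $\nabla\tilde u^N\rightharpoonup\nabla u$ weakly in $\rmL^2(\OmegaT)$.

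Next I would transfer the strong convergence to $u^N$. The difference $u^N-\tilde u^N=-\sum_k(\hhh^N_k-\one_{Q^N_k})U^N_k$ is controlled by the pointwise estimate already derived inside the proof of Proposition~\ref{prop:strong_compactness}, namely $\big(\sum_k(\hhh^N_k-\one_{Q^N_k})U^N_k\big)^2\le 2^d\sum_k\sum_{m\in M}\one_{Q^N_k}\,|U^N_{k+m}-U^N_k|^2$. Decomposing each diagonal difference $U^N_{k+m}-U^N_k$ into at most $d$ nearest-neighbour differences and invoking Definition~\ref{def:slope_disc}, the nearest-neighbour differences are bounded through $S_{N,\diff}(c^N)/(2\delta_*\omega_* N^2)$, whence $\|u^N-\tilde u^N\|_{\rmL^2(\OmegaT)}^2\le C_d\,K^S_\diff/(\delta_*\omega_* N^2)\to0$. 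Together with $\tilde u^N\to u$ in $\rmL^2$ this establishes $u^N\to u$ strongly in $\rmL^2([0,T]\times X)$.

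Finally, for the discrete gradient I would argue by weak compactness and limit identification. On each cube $Q^N_k$ the entries of $\nabla_N u^N$ are the nearest-neighbour difference quotients $N(U^N_{i,k+e}-U^N_{i,k})$, whose squares integrate (against the factor $N^2$ present in $S_{N,\diff}$) to at most $K^S_\diff/(2\delta_*\omega_*)$; hence $\nabla_N u^N$ is $N$-uniformly bounded in $\rmL^2(\OmegaT)$ and, along a subsequence, $\nabla_N u^N\rightharpoonup G$ weakly in $\rmL^2(\OmegaT)$. To identify $G=\nabla u$, I would test against $\psi\in\rmC^\infty_c((0,T)\times\bbT^d;(\R^d)^I)$ and use unitarity of the shift on $\rmL^2(\bbT^d)$ to perform a discrete summation by parts, obtaining $\langle\nabla_N u^N,\psi\rangle=-\langle u^N,\div_N\psi\rangle$ with an adjoint discrete divergence $\div_N\psi$. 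Taylor expanding the smooth $\psi$ gives $\div_N\psi\to\div\psi$ uniformly, and combining this with the strong convergence $u^N\to u$ shows the right-hand side tends to $-\langle u,\div\psi\rangle=\langle\nabla u,\psi\rangle$, while the left-hand side tends to $\langle G,\psi\rangle$. As $\psi$ is arbitrary, $G=\nabla u$; uniqueness of the limit promotes this to convergence along the chosen subsequence.

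The step I expect to be the main obstacle is the identification of the weak $\rmH^1$-limit of $\tilde u^N$ with the square root $u=(\rho/\omega)^{1/2}$: strong compactness from Proposition~\ref{prop:strong_compactness} is available only at the level of the concentrations $\tilde\rho^N=\omega(\tilde u^N)^2$, and passing through the nonlinear square root forces the Riesz--Scheff\'e upgrade from $\rmL^1$-convergence of the squares to $\rmL^2$-convergence of $\tilde u^N$ itself. Once strong $\rmL^2$-convergence of $\tilde u^N$ (and hence of $u^N$) is secured, the identification of both $\nabla\tilde u^N$ and $\nabla_N u^N$ as the single limit $\nabla u$ reduces to routine weak-limit bookkeeping and consistency of the discrete divergence.
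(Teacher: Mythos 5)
Your proof is correct, but it takes a partially different route from the paper's. The difference lies entirely in how the strong $\rmL^2$ convergence $u^N\to u$ is obtained: the paper never touches the interpolant $\tilde u^N$ in this proof, instead getting the convergence in two lines from the elementary pointwise inequality $|\sqrt{a}-\sqrt{b}|^2\le|a-b|$ applied with $a=\rho^N/\iota_N w^N$ and $b=\rho/\omega$, so that the strong $\rmL^1$ convergence $\rho^N\to\rho$ of Proposition~\ref{prop:strong_compactness} together with $\iota_N w^N\to\omega$ from \eqref{eq:ConvergenceRefMeas} finishes the job directly. You instead route through $\tilde u^N$: uniform $\rmL^2(0,T;\rmH^1)$ bounds via Lemma~\ref{lem:spatial_reg_NEW}, a Riesz--Scheff\'e upgrade from $\rmL^1$ convergence of $(\tilde u^N)^2=\tilde\rho^N/\omega$ to $\rmL^2$ convergence of $\tilde u^N$, and then the comparison estimate $\|u^N-\tilde u^N\|_{\rmL^2(\OmegaT)}^2\le C_d K^S_\diff/(\delta_*\omega_* N^2)$, which you correctly adapt from the auxiliary inequality inside the proof of Proposition~\ref{prop:strong_compactness} (there used in $\rmL^1$ for $\rho^N$ versus $\tilde\rho^N$). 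All of these steps are sound, but the detour is heavier than necessary; in particular, the same inequality $|\sqrt{a}-\sqrt{b}|^2\le|a-b|$ would replace your Riesz--Scheff\'e argument outright, and you end up establishing $u\in\rmL^2(0,T;\rmH^1)$ twice (once via weak compactness of $\nabla\tilde u^N$, once via the identification of the limit of $\nabla_N u^N$). Your treatment of the discrete gradient -- the uniform $\rmL^2$ bound $K^S_\diff/(2\delta_*\omega_*)$ extracted from Definition~\ref{def:slope_disc}, weak compactness, and identification of the limit by discrete summation by parts against smooth test functions -- coincides with the paper's argument; like the paper's proof, you correctly interpret $\nabla_N u^N$ as the difference quotient scaled by $N$ (the prefactor $\frac1N$ in the displayed definition is evidently a typo, since otherwise the uniform bound would force the limit to vanish). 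What your version buys is the additional weak $\rmL^2$ convergence $\nabla\tilde u^N\rightharpoonup\nabla u$ of the regular interpolant, which the paper's shorter proof does not provide.
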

\begin{proof}

    Throughout this proof we fix an arbitrary species $i$ and omit the corresponding index. We denote $\omega^N = \iota_N w^N$. The strong $\rmL^2$ convergence $u^N\to u$ immediately follows by integrating the estimate
    \begin{align*}
        \abs{u^N-u}^2 = \abs[\bigg]{\sqrt{\frac{\rho^N}{\omega^N}}-\sqrt{\frac{\rho}{\omega}}}^2 \le \abs[\Big]{\frac{\rho^N}{\omega^N}-\frac{\rho}{\omega}} \le \frac{1}{\omega_\ast}\abs{\rho^N-\rho} +\frac{1}{\omega_\ast^2}\abs{\rho}\abs{\omega^N-\omega},
    \end{align*}
    and using Assumption~\ref{ass:General}.
    
    Next, we consider the differences $\nabla_N u^N = \frac1N\sum_{e\in\Eu}\bra{\shift^N_eu^N-u^N}e$.
    Since $\iota_N$ commutes with multiplication, it holds $\norm{\nabla_N u^N}_{\rmL^2(\OmegaT;\bbR^d)}^2 \le \delta\omega^\ast K^S_\diff$ and
    hence (along a not renamed subsequence) $\nabla_N u^N \rightharpoonup v$ weakly in $\rmL^2(\OmegaT;\R^d)$ for some $v \in \rmL^2(\OmegaT;\R^d)$. 
    This $v$ is the weak gradient of $u$. Indeed, let $\varphi\in \rmC^\infty(\OmegaT)$. Then, for every $e\in \Eu$ (and the above subsequence) we have
	\begin{align*}
		\iint_\OmegaT v(t,x)\cdot e\; \varphi(t,x)\dd x \dd t 
		&= \lim_{N\to\infty} \iint_\OmegaT\bra*{\frac{u^N(t,x+e/N)-u^N(t,x)}{1/N}}\varphi(t,x)\dd x \dd t\\
        &= \lim_{N\to\infty}\iint_\OmegaT u^N(t,x)\bra*{\frac{\varphi(t,x-e/N)-\varphi(t,x)}{1/N}}\dd x \dd t\\
        &=-\iint_\OmegaT u(t,x)\partial_{x_e}\varphi(t,x)\dd x \dd t.
    \end{align*}
    In particular, $u\in \rmL^2([0,T];\rmH^1(\bbT^d))$ and the proof 
    is concluded.
\end{proof}

\subsection{Lower limit of dissipation functionals}
\label{sub:LowLimDiss}

In the previous section we have obtained candidate curves that may be EDB solutions for \eqref{eq:RDSysCont}. Following the strategy of Section~\ref{ss:GeneralConvergenceStrategy}, the next step is to prove rigorous analogs of \eqref{eq:abstract_lower_limit}. More precisely, we will prove lower limit inequalities for the rate and slope terms independently. First, we consider the slopes, employing a Ioffe's liminf theorem.
\begin{proposition}\label{prop:LiminfSlope}
    Assume \eqref{ass:Reactions1} and let $(c^N,F^N,J^N)$ satisfy the a priori estimates \eqref{eq:AprioriEstimBasic}. 
    Let $u_i = \sqrt{\rho_i/\omega_i}$ be the $\rmL^2$-limit of $u_i^N = \iota_N\sqrt{c^N_i\discref^N_i}$ from Proposition~\ref{prop:convergence_of_derivatives}.
    Then, it holds
    \begin{align*}
        \liminf_{N\to\infty}\int_0^T S_N(c^N)\dd t \ge \int_0^T\calS(\rho)\dd t
    \end{align*}
\end{proposition}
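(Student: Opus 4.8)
The plan is to split $S_N = S_{N,\diff}+S_{N,\react}$ and to recast the whole time-integrated slope as a \emph{single} integral functional over $\OmegaT$ to which Ioffe's lower semicontinuity theorem applies. Recall from Proposition~\ref{prop:convergence_of_derivatives} that $u^N_i = \iota_N U^N_i \to u_i = \sqrt{\rho_i/\omega_i}$ strongly in $\rmL^2(\OmegaT)$ (hence, along a subsequence, a.e.), that the discrete gradient converges $\nabla_N u^N \rightharpoonup \nabla u$ weakly in $\rmL^2(\OmegaT)$, and (from the consequences of Assumption~\ref{ass:General}) that $\iota_N\discref^N \to \omega$ and $\shift^N_e(\iota_N\discref^N_i) \to \omega_i$ strongly in $\rmL^\infty$.

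The first step is the rewriting. Using $\frac{1}{N^d}\sum_k g_k = \int_{\bbT^d}\iota_N g\,\dd x$ together with the fact that $\iota_N$ turns discrete products and squares into products and squares of the corresponding piecewise-constant functions, I obtain
\begin{align*}
 \int_0^T S_{N,\diff}(c^N)\dd t &= \sum_{i\in I}\sum_{e\in\Eu} 2\delta_i \iint_\OmegaT \omega^{\diff,N}_{i,e}\,\big|g^N_{i,e}\big|^2 \dd x\dd t, \\
 \int_0^T S_{N,\react}(c^N)\dd t &= \sum_{r\in R} 2\kappa_r \iint_\OmegaT \omega^{\react,N}_r\,\big((u^N)^{\alpha^r}{-}(u^N)^{\beta^r}\big)^2 \dd x\dd t,
\end{align*}
where $g^N_{i,e}$ denotes the $e$-component of the discrete gradient $\nabla_N u^N_i$, and $\omega^{\diff,N}_{i,e} := \iota_N\sqrt{\discref^N_{i,\cdot+e}\discref^N_{i,\cdot}}$, $\omega^{\react,N}_r := \iota_N\sqrt{(\discref^N)^{\alpha^r}(\discref^N)^{\beta^r}}$. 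By the convergences above, $\omega^{\diff,N}_{i,e}\to\omega_i$ and $\omega^{\react,N}_r\to\sqrt{\omega^{\alpha^r}\omega^{\beta^r}}$ strongly in $\rmL^\infty$, while $g^N_{i,e}\rightharpoonup\partial_{x_e}u_i$ weakly in $\rmL^2$ and $(u^N)^{\alpha^r}\to u^{\alpha^r}$, $(u^N)^{\beta^r}\to u^{\beta^r}$ a.e.\ by continuity of $U\mapsto U^{\alpha^r}$ on $[0,\infty)^I$ and the a.e.\ convergence $u^N\to u$.

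The second step is one application of Ioffe's theorem. Collecting the strongly-convergent data into $s^N := \big(u^N, (\omega^{\diff,N}_{i,e})_{i,e}, (\omega^{\react,N}_r)_r\big)$ and the weakly-convergent data into $\xi^N := (g^N_{i,e})_{i,e}$, the two displays combine to $\int_0^T S_N(c^N)\dd t = \iint_\OmegaT f(s^N,\xi^N)\dd x\dd t$ with
\[
 f(s,\xi) = \sum_{i,e} 2\delta_i\, w^{\diff}_{i,e}\,|\xi_{i,e}|^2 + \sum_r 2\kappa_r\, w^{\react}_r\big(U^{\alpha^r}{-}U^{\beta^r}\big)^2,
\]
which is nonnegative, continuous, and convex in $\xi$. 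Since $\OmegaT$ has finite measure, the weak $\rmL^2$ convergence $\xi^N\rightharpoonup\nabla u$ is in particular weak $\rmL^1$ convergence, and $s^N\to s := \big(u, (\omega_i), (\sqrt{\omega^{\alpha^r}\omega^{\beta^r}})\big)$ a.e.\ (after passing to a subsequence realizing the liminf, which is harmless). Ioffe's lower semicontinuity theorem therefore yields
\[
 \liminf_{N\to\infty}\int_0^T S_N(c^N)\dd t \ge \iint_\OmegaT f(s,\nabla u)\dd x\dd t = \int_0^T \calS(\rho)\dd t,
\]
where the final identity uses $\sum_e|\partial_{x_e}u_i|^2 = |\nabla u_i|^2$ and $u^{\alpha^r} = (\rho/\omega)^{\alpha^r/2}$, matching Definition~\ref{def:slope_cont}.

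The main obstacle is the $N$-dependence of the weights $\omega^{\diff,N}$ and $\omega^{\react,N}$: Ioffe's theorem is formulated for a fixed integrand, so these coefficients must be absorbed into the strongly-convergent variable $s^N$ rather than left as external factors, and one must verify that the resulting integrand is genuinely convex in the weak variable $\xi$ (true here since the diffusive part is quadratic in $\xi$ and the reactive part is $\xi$-independent). A secondary technical point is identifying the embedded discrete gradient $g^N_{i,e}$ with the weak limit $\partial_{x_e}u_i$, which is precisely the content of Proposition~\ref{prop:convergence_of_derivatives} and whose correct $N$-scaling must be tracked through the rewriting of $S_{N,\diff}$.
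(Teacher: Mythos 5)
Your proof is correct and takes essentially the same route as the paper: both arguments rest on the convergences supplied by Proposition~\ref{prop:convergence_of_derivatives} (strong $\rmL^2$ convergence of $u^N$, weak $\rmL^2$ convergence of the scaled discrete gradients) together with the $\rmL^\infty$ convergence of the embedded reference measures, and both conclude by Ioffe-type lower semicontinuity of integrands that are convex in the weakly convergent variable. The only difference is packaging: the paper applies Ioffe's theorem to the diffusive part and Fatou's lemma to the (gradient-free) reactive part, whereas you merge everything into a single Ioffe application with the $N$-dependent weights absorbed into the strongly convergent variable --- a harmless variation, and your explicit treatment of those weights is in fact slightly more careful than the paper's terse handling.
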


\begin{proof}
    By Proposition~\ref{prop:convergence_of_derivatives} we have along a (not renamed) subsequence $\nabla_N u^N_i\rightharpoonup \nabla u_i$ weakly in $\rmL^2(\OmegaT;\bbR^d)$. 
    Thus, an application of Ioffe's liminf theorem, \cite[Thm.\,2.3.1]{Buttazzo1989}, directly yields the lower limit for the diffusive part: 
    \begin{align*}
        \liminf_{N\to\infty}\int_0^T S_{N,\diff}(c^N)\dd t
        &\ge \sum_{i\in I} 2\delta_i \int_0^T\int_{\bbT^d} |\nabla u_i|^{2}\dd\omega_i\dd t = \int_0^T\calS_\diff(\rho)\dd t.
    \end{align*}
    For the reactive part, by definition of $\iota_N$, we have
    \begin{align*}
        S_{N,\react}(c^N) 
        &= \sum_{r\in R} 2\kappa_r \frac{1}{N^d} \sum_{k\in\bbZ_N^d} \bra[\big]{\discref_k^N}^{(\alpha^r+\beta^r)/2}\abs[\Bigg]{\bra[\bigg]{\frac{c_k^N}{\discref_k^N}}^{\alpha^r/2} - \bra[\bigg]{\frac{c_k^N}{\discref_k^N}}^{\beta^r/2}}^2\\
        &= \sum_{r\in R} 2\kappa_r \int_{\bbT^d} \bra[\big]{\omega^N}^{(\alpha^r+\beta^r)/2}\abs[\Big]{\bra[\big]{u^N}^{\alpha^r} - \bra[\big]{u^N}^{\beta^r}}^2.
    \end{align*}
On the other hand, it holds $(u^N)^\lambda(t,x)\to u^\lambda(t,x)$ and $\discref^N(x) \to \omega(x)$ for 
$\scrL$-a.e. $t\in[0,T]$ and $\scrL^d$-a.e. $x\in\bbT^d$, and every multiindex 
$\lambda\in\R^I$.
Thus, Fatou's Lemma with $f_N = (\omega^N){(\alpha^r+\beta^r)/2} 
\abs[\big]{\bra[\big]{\nu^N}^{\alpha^r} - \bra[\big]{\nu^N}^{\beta^r}}^2$ yields
\begin{align*}
        \liminf_{N\to\infty}\int_0^T S_{N,\react}(c^N)\dd t \ge \sum_{r\in R} 2\kappa_r 
        \int_0^T \int_{\bbT^d} \omega^{(\alpha^r+\beta^r)/2}\abs[\big]{u^{\alpha^r} - 
        u^{\beta^r}}^2\dd t = \int_0^T \calS_\react(\rho)\dd t,
\end{align*}
which concludes the proof.
\end{proof}

Next, we focus on the rate parts of the dissipation potentials. Here, the main challenge
is the diffusive rate, where we want to obtain the quadratic dissipation from the cosh-type dissipation. 
The proof is done by dualization following the proof of \cite[Thm.\,6.2 (i)]{HT23}. First, 
we link the cosh-type and quadratic dual dissipation potentials in the following lemma.

\begin{lemma}\label{lem:EstimateDualDissipationPotential}
Let $(c^N)_{N\in\N}$ be any sequence s.t. $\iota_Nc^N\eqqcolon\rho^N\rightharpoonup \rho$ in $\rmL^1(X)$. Moreover, let $\varphi\in \rmC^1(X)$ be given. Then, it holds
    \[
    \limsup_{N\to\infty} R^*_{N,\diff}(c^N,\overline\nabla\iota^*_N\varphi)\leq \cal{R}^*_\diff(\rho,\nabla\varphi).
    \]
    In particular, we have for $\iota_Nc^N\to\rho$ in $\rmL^1([0,T]\times X)$ and any $\varphi\in \rmL^1(0,T;\rmC^1(X))$ that
    \[
    \limsup_{N\to\infty} \int_0^T R^*_{N,\diff}(c^N(t),\overline\nabla\iota^*_N\varphi(t))\dd t\leq \int_0^T \mathcal{R}^*_\diff(\rho(t),\nabla\varphi(t))\dd t.
    \]
\end{lemma}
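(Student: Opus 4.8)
The plan is to reduce the $\cosh$-type dual dissipation to its quadratic counterpart, exploiting that the tested discrete field $\overline\nabla\iota^*_N\varphi$ is of order $1/N$, and to cope with the nonlinearity in $c^N$ — which is the delicate point under the mere weak-$\rmL^1$ convergence $\rho^N=\iota_Nc^N\rightharpoonup\rho$ — by means of the arithmetic--geometric mean inequality. The first step is to compute
\[
\xi^N_{i,k,e}:=(\overline\nabla\iota^*_N\varphi)_{i,k,e}=N^d\int_{Q^N_k}\bigl[\varphi_i(x{+}e/N)-\varphi_i(x)\bigr]\dd x,
\]
and to observe, using $\varphi\in\rmC^1$ together with uniform continuity of $\nabla\varphi$ on $\bbT^d$ (via the identity \eqref{eq:HSDI}), that the piecewise-constant functions $x\mapsto N\xi^N_{i,k,e}$ (for $x\in Q^N_k$) converge uniformly to $\partial_{x_e}\varphi_i$. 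Since $\C^*(s)=\tfrac12 s^2+O(s^4)$ near $0$, one has $N^2\C^*(s/N)\to\tfrac12 s^2$ locally uniformly; combining the two facts shows that the multiplier $m^N_{i,k,e}:=N^2\C^*(\xi^N_{i,k,e})$, read as a piecewise-constant function, converges \emph{uniformly} to $\tfrac12|\partial_{x_e}\varphi_i|^2$.

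Since only a $\limsup$ upper bound is needed, I would then replace the geometric mean by its arithmetic majorant, $(c^N_{i,k}c^N_{i,k+e})^{1/2}\le\tfrac12(c^N_{i,k}+c^N_{i,k+e})$, which linearizes the dependence on $c^N$:
\[
R^*_{N,\diff}(c^N,\overline\nabla\iota^*_N\varphi)\le\frac{1}{N^d}\sum_{i,k,e}\delta_i\,\tfrac12\bigl(c^N_{i,k}+c^N_{i,k+e}\bigr)\,m^N_{i,k,e}.
\]
Writing the unshifted sum as $\tfrac{1}{N^d}\sum_k c^N_{i,k}\,m^N_{i,\cdot,e}=\int_{\bbT^d}\rho^N_i\,m^N_{i,\cdot,e}\dd x$ and the shifted sum after reindexing $k\mapsto k{+}e$ (which merely shifts the uniformly convergent multiplier), I would pass to the limit by pairing the weakly-$\rmL^1$-convergent $\rho^N_i$ against the uniformly (hence $\rmL^\infty$-strongly) convergent multipliers; boundedness of $\|\rho^N_i\|_{\rmL^1}$ makes this splitting legitimate. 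Both the unshifted and the shifted term converge to $\int_{\bbT^d}\rho_i\,\tfrac12|\partial_{x_e}\varphi_i|^2\dd x$, so summing over $e\in\Eu$ and $i\in I$ produces exactly $\sum_i\tfrac{\delta_i}{2}\int_{\bbT^d}|\nabla\varphi_i|^2\rho_i\dd x=\calR^*_\diff(\rho,\nabla\varphi)$. The AM--GM step is lossless in the limit because neighbouring cell values coincide asymptotically, so no factor is wasted.

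For the time-integrated statement I would integrate the pointwise-in-$t$ estimate. Assuming without loss of generality that $\int_0^T\calR^*_\diff(\rho,\nabla\varphi)\dd t<\infty$ (otherwise the inequality is trivial), strong $\rmL^1([0,T]\times\bbT^d)$ convergence of $\rho^N$ yields $\rho^N(t)\to\rho(t)$ in $\rmL^1(X)$ for a.e.\ $t$, so the first part applies for a.e.\ $t$; the passage to the limit inside the integral is then made through a (generalized) dominated-convergence argument built on the affine upper bound above, using that for a.e.\ $t$ the fields $\xi^N(t)$ are $O(1/N)$ so that $N^2\C^*(\cdot)$ is controlled quadratically.

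The conceptual obstacle is precisely the nonlinear dependence of $R^*_{N,\diff}$ on the only weakly-$\rmL^1$-convergent $\rho^N$ (both through the geometric mean and through the non-quadratic $\C^*$); this is resolved by the two linearizing devices above (AM--GM and the smallness $|\xi^N|=O(1/N)$), which are admissible precisely because an \emph{upper} estimate suffices. The remaining technical care concerns the time integration: $\C^*$ grows exponentially whereas $\varphi$ is only integrable in time, so one must secure an integrable majorant, and here the bound $|\xi^N_{i,k,e}(t)|\le\|\nabla\varphi(t)\|_{\rmC^0}/N$ combined with the local quadratic estimate for $\C^*$ is what confines the computation to the regime where quadratic control holds.
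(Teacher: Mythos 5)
For the fixed\nobreakdash-time claim your argument is correct and is essentially the paper's own proof: both replace the geometric mean by the arithmetic mean, both exploit that the discrete gradients $\overline\nabla\iota^*_N\varphi$ are $O(1/N)$ so that $N^2\C^*(\cdot)$ collapses onto the quadratic $\tfrac12|\partial_{x_e}\varphi_i|^2$ (the paper encodes this through the inequality $N^2\C^*(r/N)\le\tfrac{r^2}{2}\cosh(r/N)$, a prefactor $a_N\to1$ and Jensen's inequality, you through locally uniform convergence of the rescaled multiplier --- the same mechanism), and both conclude by pairing the weakly-$\rmL^1$ convergent $\rho^N$ and its $e/N$-shift against uniformly convergent bounded multipliers.

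The gap is in the time-integrated claim, and you half-identify it yourself. Your dominated-convergence device needs an $N$-uniform, $t$-integrable majorant, and the one you propose does not exist: the bound $|\xi^N(t)|\le\|\nabla\varphi(t)\|_{\rmC^0}/N$ places you in the quadratic regime of $\C^*$ only for those $t$ with $\|\nabla\varphi(t)\|_{\rmC^0}\lesssim N$, and since $\varphi$ is merely $\rmL^1$ in time this fails, for every $N$, on a set of $t$ of positive measure, where the integrand retains the exponential growth of $\C^*$. In fact no proof can close this at the stated generality, because the second claim is false for general $\varphi\in\rmL^1(0,T;\rmC^1(X))$: take $d=1$, one species, $c^N_k(t)=t$ for all $k$ (so $\iota_Nc^N=\rho\equiv t$ exactly, and all convergence hypotheses hold trivially), and $\varphi(t,x)=t^{-1/2}\sin(2\pi x)/(2\pi)\in\rmL^1(0,1;\rmC^1(\bbT^1))$. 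Then $\int_0^1 \calR^*_\diff(\rho,\nabla\varphi)\dd t=\delta_1/4<\infty$, whereas an exact computation gives $(\overline\nabla\iota^*_N\varphi)_k(t)=b_N\,t^{-1/2}\cos(2\pi(k{+}1)/N)$ with $b_N=N(1-\cos(2\pi/N))/(2\pi^2)>0$, so already a single cell with $\cos(2\pi(k{+}1)/N)\ge\tfrac12$ contributes $N\delta_1\, t\,\C^*\big(\tfrac12 b_N t^{-1/2}\big)$ to $R^*_{N,\diff}$, and the exponential tail of $\C^*$ as $t\downarrow0$ makes $\int_0^1 t\,\C^*(\tfrac12 b_N t^{-1/2})\dd t=+\infty$ for every fixed $N$; hence the left-hand side is $+\infty$ while the right-hand side is finite. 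To be fair, the paper's own one-sentence justification (``Fatou's lemma and the pointwise convergence of the integral'') suffers from exactly the same defect. Both your argument and the paper's become correct under the hypothesis that is actually used later in Proposition~\ref{prop:LiminfRate}, namely $\varphi\in\rmC^1([0,T]\times X)$: then $\sup_t\|\nabla\varphi(t)\|_{\rmC^0}<\infty$, the prefactor $a_N\to1$ uniformly in $t$, and $C\,\|\rho^N(t)\|_{\rmL^1(X)}$ is an admissible majorant whose time integral converges, so generalized dominated convergence (or the reverse Fatou lemma) applies. If you want to keep the $\rmL^1$-in-time hypothesis, you must add an exponential integrability condition on $t\mapsto\|\nabla\varphi(t)\|_{\rmC^0}$.
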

\begin{proof}
    For $\varphi\in \rmC^1(X)$, we have
    \begin{align*}
    |\iota^*_N\varphi_{i,k+e} - \iota^*_N\varphi_{i,k}| &\leq N^d\int_{\bbT^d} |\varphi_i(x)\bra[\big]{\one_{Q^N_{k+e}}(x)-\one_{Q^N_k}(x)}|\dd x\\
    &= N^d\int_{Q^N_k} |\varphi_i(x) -\varphi_i(x+e/N)|\dd x\\
    &\leq N^d\int_{Q^N_k} |\partial_{x_e}\varphi_i(x)|\cdot |e/N|\dd x = \frac1N\iota^*_N(|\partial_{x_e}\varphi_i|)_k .
\end{align*}
Using that $\C^*(r)=\C^*(-r)=\C^*(|r|)$ and the monotonicity of $[0,\infty)\ni r\mapsto \C^*(r)$, we compute
\begin{align*}
    R^*_{N, \diff} (c^N,\overline\nabla\iota^*_N\varphi) &= \frac{1}{N^d} \sum_{i\in I}\sum_{k\in\bbZ_N^d} \sum_{e\in \Eu} N^2 \delta_i \bra[\big]{c^N_{i,k}c^N_{i,k+e}}^{1/2}    \C^*( \iota^*_N\varphi_{i,k+e} - \iota^*_N\varphi_{i,k})\\
    &= \frac{1}{N^d} \sum_{i\in I}\sum_{k\in\bbZ_N^d} \sum_{e\in \Eu} N^2 \delta_i \bra[\big]{c^N_{i,k}c^N_{i,k+e}}^{1/2}    \C^*(|\iota^*_N\varphi_{i,k+e} - \iota^*_N\varphi_{i,k}|)\\
    &\leq \frac{1}{N^d} \sum_{i\in I}\sum_{k\in\bbZ_N^d} \sum_{e\in \Eu} N^2 \delta_i \bra[\big]{c^N_{i,k}c^N_{i,k+e}}^{1/2}    \C^*\bra[\Big]{\frac1N\iota^*_N(|\partial_{x_e}\varphi_i|)_k}.
\end{align*}
Note that, by the definition of $\C^\ast$, for all $r\in[0,\infty)$ and all $N\in\bbN$ it holds 
\[
    N^2\C^*\bra[\Big]{\frac{r}{N}}\leq \frac{r^2}{2}\cosh\bra[\Big]{\frac{r}{N}}.
\]
Our aim is to apply this with $r=\iota^*_N(|\partial_{x_e}\varphi_i|)_k$, which is why we introduce the scalar
\[
    a_N\coloneqq\max_{k\in\bbZ_N^d,i\in I,e\in\mathrm{E}^d}\Big\{\cosh\Big(\frac1N\iota^*_N(|\partial_{x_e}\varphi_i|)_k\Big)\Big\}\in[1,\infty),
\]
and observe that $a_N\to 1$ as $N\to\infty$. Then, we can conclude that
\begin{align*}
    R^*_{N, \diff} (c^N,\overline\nabla\iota^*_N\varphi) &\leq a_N \frac{1}{N^d} \sum_{i\in I}\sum_{k\in\bbZ_N^d} \sum_{e\in \Eu}  \delta_i \bra[\big]{c^N_{i,k}c^N_{i,k+e}}^{1/2}\frac 1 2 (\iota^*_N(|\partial_{x_e}\varphi_i)_k|))^2\\
    &\le a_N\frac{1}{N^d} \sum_{i\in I}\sum_{k\in\bbZ_N^d} \sum_{e\in \Eu} \delta_i \frac{c^N_{i,k}+c^N_{i,k+e}}{2}\frac 1 2 (\iota^*_N(|\partial_{x_e}\varphi_i|)_k))^2 \\
    &\le a_N \frac{1}{N^d} \sum_{i\in I}\sum_{k\in\bbZ_N^d} \sum_{e\in \Eu} \delta_i \frac{c^N_{i,k}+c^N_{i,k+e}}{2}\frac 1 2 \iota^*_N(|\partial_{x_e}\varphi_i|)_k^2) \\
    &= a_N \sum_{i\in I}\sum_{e\in \Eu}\delta_i  \big\langle \big(c^N_i+\shift^N_e c^N_i\big)/2,\frac12\iota^*_N\big(\abs{\partial_{x_e}\varphi_i}^2\big)\big\rangle_N \\
    &= a_N\sum_{i\in I}\delta_i  \Big\langle \iota_N \big(c^N_i+\shift^N_e c^N_i\big)/2,\sum_{e\in \Eu}\frac12\abs{\partial_{x_e}\varphi_i}^2\Big\rangle \\
    &= a_N\sum_{i\in I}\delta_i  \big\langle \iota_N \big(c^N_i+\shift^N_e c^N_i\big)/2,\abs{\nabla\varphi_i}^2/2\big\rangle \\
    &\overset{N\to \infty}{\longrightarrow} \cR_{\mathrm{diff}}^*(\rho,\nabla\varphi),
\end{align*}
where we have used that for all $\psi \in \rmC(\bbT^d)$ it holds
\begin{align*}
    \langle\iota_N\shift^N_ec_i^N,\psi\rangle = \sum_{k\in\bbZ_N^d} \int_{Q^N_k}c_{i,k+e}\psi(x)\dd x = \sum_{k\in\bbZ_N^d} \int_{Q^N_k}c_{i,k}\psi(x-e/N)\dd x \overset{N\to\infty}{\longrightarrow} \langle\rho_i,\psi\rangle. 
\end{align*}
This proves the first claim. The second claim follows by Fatou's lemma and the pointwise convergence of the integral.
\end{proof}

Having linked the cosh-type and quadratic dual dissipation potentials, we are now in the position to obtain the lower limit for the diffusive rates using duality arguments. For the reactive rate part there is no change in the structure, thus allowing us to again employ Ioffe's liminf theorem.

\begin{proposition}\label{prop:LiminfRate}
    Assume \eqref{ass:Reactions1} and let $(c^N,F^N,J^N)$ satisfy the a priori estimates \eqref{eq:AprioriEstimBasic}. Moreover, let $\rho$ be the limit from $\iota_Nc^N$ from Proposition \ref{prop:strong_compactness}, and let $f$ and $j$ be the limits of $\iota_{N,\diff}F^N$ and $\iota_{N,\react}$ from Proposition \ref{prop:FluxBound}, respectively. Then, we have
    \[
    \liminf_{N\to\infty} \int_0^T R_N(c^N,F^N,J^N)\dd t\geq\int_0^T \mathcal{R}(\rho,f,j) \dd t.
    \]
\end{proposition}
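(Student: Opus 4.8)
The plan is to prove the two liminf inequalities for the diffusive and the reactive rate parts separately, since $R_N=R_{N,\diff}+R_{N,\react}$ and $\calR=\calR_\diff+\calR_\react$, and then to conclude by superadditivity, $\liminf_N(x_N{+}y_N)\ge\liminf_N x_N+\liminf_N y_N$. The two parts require genuinely different techniques: the reactive rate keeps its structure in the limit (both the prelimit and the limit are integrals of the perspective function $\CC$), whereas the diffusive rate is a \emph{singular} limit in which the $N^2$-scaled $\cosh$-type dissipation must degenerate to the quadratic Benamou--Brenier functional. Throughout I work along the common subsequence furnished by Propositions~\ref{prop:strong_compactness} and~\ref{prop:FluxBound}, so that $\rho^N\to\rho$ strongly in $\rmL^1(\OmegaT)$ (hence a.e.), $f^N=\iota_{N,\diff}F^N\rightharpoonup f$ and $j^N=\iota_{N,\react}J^N\rightharpoonup j$ weakly in $\rmL^1(\OmegaT)$.

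For the reactive part I would first observe that, since $j^N$ and $\rho^N=\iota_Nc^N$ are piecewise constant on the same cells $Q^N_k$, one has the exact identity $R_{N,\react}(c^N(t),J^N(t))=\sum_{r\in R}\int_{\bbT^d}\CC\bigl(j^N_r(t,x)\,\big|\,w^N_r(t,x)\bigr)\dd x$ with weights $w^N_r=\kappa_r(\rho^N)^{(\alpha^r+\beta^r)/2}$. By the a.e. convergence $\rho^N\to\rho$ and assumption \eqref{ass:Reactions1} (which keeps the exponent $\tfrac12|\alpha^r{+}\beta^r|_1$ at most $\pcrit$) we get $w^N_r\to w_r\coloneqq\kappa_r(\rho^{\alpha^r}\rho^{\beta^r})^{1/2}$ a.e. on $\OmegaT$. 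Since $(s,w)\mapsto\CC(s|w)$ is nonnegative, lower semicontinuous, and convex in $s$ (indeed jointly convex, \eqref{eq:CC.prop.a}), Ioffe's lower semicontinuity theorem \cite[Thm.\,2.3.1]{Buttazzo1989}, applied with the strongly (a.e.) convergent parameter $w^N_r$ and the weakly $\rmL^1$-convergent argument $j^N_r$, yields $\liminf_N\int_0^T R_{N,\react}(c^N,J^N)\dd t\ge\int_0^T\calR_\react(\rho,j)\dd t$.

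For the diffusive part I would argue by dualization, following \cite[Thm.\,6.2\,(i)]{HT23}. Fixing any $\varphi\in\rmL^1(0,T;\rmC^1(X))$, the Fenchel--Young inequality for the dual pair $R_{N,\diff}=(R^*_{N,\diff})^*$ gives $R_{N,\diff}(c^N,F^N)\ge\langle F^N,\overline\nabla\iota_N^*\varphi\rangle_N-R^*_{N,\diff}(c^N,\overline\nabla\iota_N^*\varphi)$. By the discrete--continuous pairing of Lemma~\ref{lem:CE.DiscCont}, the linear term equals $\langle f^N,\nabla\varphi\rangle$, which tends to $\langle f,\nabla\varphi\rangle$ because $f^N\rightharpoonup f$ weakly in $\rmL^1$ and $\nabla\varphi$ is bounded; the dual-dissipation term is controlled by Lemma~\ref{lem:EstimateDualDissipationPotential}, giving $\limsup_N\int_0^T R^*_{N,\diff}(c^N,\overline\nabla\iota_N^*\varphi)\dd t\le\int_0^T\calR^*_\diff(\rho,\nabla\varphi)\dd t$. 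Hence, for every admissible $\varphi$, $\liminf_N\int_0^T R_{N,\diff}(c^N,F^N)\dd t\ge\int_0^T\bigl(\langle f,\nabla\varphi\rangle-\calR^*_\diff(\rho,\nabla\varphi)\bigr)\dd t$. Taking the supremum over a countable family of test functions that is dense for the dual representation and invoking the Fenchel--Moreau duality $\calR_\diff=(\calR^*_\diff)^*$, together with a measurable selection to realize the pointwise-in-time supremum, should recover $\int_0^T\calR_\diff(\rho,f)\dd t$, which is the claimed bound.

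The hard part will be this last recovery step for the diffusive rate: reconstructing the full quadratic functional $\sum_i\frac1{2\delta_i}\int|f_i|^2/\rho_i\,\dd x$ from the degenerate $\cosh$-dualization. Two points need care. First, one must ensure that the admissible gradient test functions $\nabla\varphi$ genuinely exhaust the dual representation of $\calR_\diff$, so that no part of $f$ is lost when passing to the supremum; this is exactly where the structure of the embeddings $\iota_{N,\diff}$ and of $\calR^*_\diff$ must be used, and it is the delicate mechanism by which the cosh structure collapses to the quadratic one. Second, the pointwise duality gives the bound only for each fixed $\varphi$, whereas the optimal field $\xi=f/(\delta\rho)$ is merely measurable in time and space; passing from the Legendre duality to the time-integrated inequality requires a measurable selection of near-optimal $\varphi$ and a dominated passage to the limit, for which the finiteness of $\calD(\rho,f,j)$ and the $\rmL^{\pcrit}$-integrability from Proposition~\ref{prop:integrability} provide the needed control on $\rho$ and $f$.
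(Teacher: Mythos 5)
Your proposal is correct and takes essentially the same route as the paper: the reactive rate is handled by Ioffe's liminf theorem (weak-$\rmL^1$ convergence of $j^N_r$ against the strongly/a.e.\ convergent weight $\kappa_r(\rho^N)^{(\alpha^r+\beta^r)/2}$, using \eqref{ass:Reactions1}), and the diffusive rate by the Fenchel--Young dualization $R_{N,\diff}(c^N,F^N)\ge\langle F^N,\nablaDDiff\iota_N^*\varphi\rangle_N-R^*_{N,\diff}(c^N,\nablaDDiff\iota_N^*\varphi)$ combined with the pairing identity of Lemma~\ref{lem:CE.DiscCont} and the limsup bound of Lemma~\ref{lem:EstimateDualDissipationPotential}. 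The only divergence is your closing step: instead of a countable dense family of test functions with a measurable-in-time near-optimal selection, the paper extends the time-integrated affine-quadratic functional $\nabla\varphi\mapsto\int_0^T\big(\langle f,\nabla\varphi\rangle-\calR^*_\diff(\rho,\nabla\varphi)\big)\dd t$ continuously to the closure $V$ of $\{\nabla\varphi:\varphi\in\rmC^1([0,T]\times X)\}$ in $\rmL^2(0,T;\rmL^2_\rho(X^\dom;Y_\diff^\tar))$ and takes the supremum over $V$, recovering $\int_0^T\calR_\diff(\rho,f)\dd t$ by duality in that Hilbert space, so no pointwise-in-time selection is needed.
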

\begin{proof}
    First, we consider the reactive part. 
    Observe that we have \[R_{N,\react}(c,J) = \calR_\react(\iota_N c^N,\iota_N J^N).\]
    Hence, the estimate for the reactive parts follows from Ioffe's liminf theorem, \cite[Thm.\,2.3.1]{Buttazzo1989}, for the convex function $J\mapsto \CC(J|\kappa_r (c^{\alpha^r} c^{\beta^r})^{1/2})$ from the weak-$\rmL^1$ convergence of $J^N$ and the strong-$\rmL^1$ convergence of $(\rho^N)^{\tfrac 1 2 (\alpha^r + \beta^r)}$ by \eqref{ass:Reactions1}.
    
    For the diffusive part, we employ the Lemma \ref{lem:EstimateDualDissipationPotential}. By the duality of $\C$ and $\C^\ast$, we have
    \begin{align*}
        \langle F^N,&\overline\nabla\iota_N^\ast\varphi\rangle_N 
        = \frac{1}{N^d} \sum_{i\in I}\sum_{k\in\bbZ_N^d} \sum_{e\in \Eu} (\overline\nabla\iota_N^\ast\varphi_i)_{k,e} F^N_{i,k,e}\\
        &\le \frac{1}{N^d} \sum_{i\in I}\sum_{k\in\bbZ_N^d} \sum_{e\in \Eu} \pra[\big]{N^2\delta_i \bra[\big]{c^N_{i,k}c^N_{i,k+e}}^{1/2}\C^\ast((\overline\nabla\iota_N^\ast\varphi_i)_{k,e}) + \C\bra{F^N_{i,k,e}|N^2\delta_i \bra[\big]{c^N_{i,k}c^N_{i,k+e}}^{1/2}}}\\
        &= R^*_{N, \diff} (c^N,\overline\nabla \iota^*_N\varphi) + R_{N, \diff} (c^N,F^N).
    \end{align*} 
    Using  $\varphi \in \rmC^1([0,T]\times X)$, $\iota_Nc^N \dd x = \rho^N\rightharpoonup^*\rho$, and $\iota_N F^N \rightharpoonup^* f$ and Lemma \ref{lem:EstimateDualDissipationPotential}, we get
    \begin{align*}
        \int_0^T \langle f,\nabla\varphi\rangle - \cR_{\mathrm{diff}}^*(\rho,\nabla\varphi) \dd t 
        &\le \lim_{N\to\infty} \int_0^T \langle F^N,\overline\nabla\iota_N^\ast\varphi\rangle_N \dd t - \limsup_{N\to\infty} \int_0^T R^*_{N, \diff} (c^N,\overline\nabla \iota^*_N\varphi) \dd t\\
        &\le \liminf_{N\to\infty} \int_0^T \langle F^N,\overline\nabla\iota_N^\ast\varphi\rangle_N - R^*_{N, \diff} (c^N,\overline\nabla\iota^*_N\varphi) \dd t\\
        &\le \liminf_{N\to\infty} \int_0^T R_{N, \diff} (c^N,F^N) \dd t.
    \end{align*} 
    The left-hand side is a quadratic functional in $\nabla \varphi$ and can be continuously extended to $V\coloneqq \overline{\set{\nabla\varphi: \varphi\in \rmC^1([0,T]\times X)}}^{\rmL^2(0,T;\rmL^2_\rho(X^\dom;Y_\diff^\tar))}$. Taking the supremum in $V$, we obtain $\int_0^T\mathcal{R}_\diff(\rho,f)\dd t$ by duality. This finishes the proof.
\end{proof}

Finally, we are in the position to prove Theorem \ref{thm:Convergence}.
\begin{proof}[Proof of Theorem \ref{thm:Convergence}]
The asserted liminf inequality for dissipations follows from Proposition \ref{prop:LiminfSlope} and \ref{prop:LiminfRate} together with the compactness results Proposition \ref{prop:FluxBound}, Proposition \ref{prop:strong_compactness}, Lemma \ref{lem:CE_closed} and Proposition \ref{prop:convergence_of_derivatives}. 

Regarding the liminf inequality for energies, we note that by definition of $\iota_N$ we have $E_N(c^N) = \calE(\rho^N)$. Therefore, as $\calE$ is convex, the liminf inequality follows from the pointwise-in-time weak-$^\ast$ convergence of $\rho^N(t)\rightharpoonup^\ast\rho$ established in Proposition \ref{prop:strong_compactness}.
\end{proof}
\section{Proof of chain rules}\label{s:CRs}

In this section we prove the two chain rules stated above in Lemma~\ref{lem:ChainRule.Discr} and Theorem \ref{thm:CRContinuous} as well 
as the discrete and the continuum versions of the Energy-Dissipation principles of 
Theorem~\ref{thm:DiscEDB.ODE} and Proposition~\ref{prop:CEDB.vs.CRDS}, respectively.

\subsection{Chain rule for discrete reaction-diffusion system}
\label{ss:CRDiscreteRDS}

We provide a full proof of the detailed chain rule in the discrete setting. 
A similar approach is given in \cite{PRST22} for the case of linear reactions, 
i.e., where  $\alpha^r$, $\beta^r$ are Euclidean unit vectors. 

Recall from Section \ref{sec:MainResults} the  modification $B(c,\dot c)$ of the duality product 
$\rmD E_N(c)\cdot \dot c$, given by
\[
B(c,v)= \sum_{i\in I}\sum_{k\in \bbZ^d_N} b\big(\tfrac{c_{i,k}}{w^N_{i,k}},
v_{i,k}\big) \quad  \text{with } b(a,s)= 
    \begin{cases} s \log  a &\text{for } a>0, \\
      0 & \text{for } a=0.     \end{cases}
\]
The special treatment of the singularity of $\log c_{i,k}$ at $c_{i,k}$ leads
to nontrivial implications that can only be handled due to the property
that the underlying (discrete) reaction-diffusion system preserves
non-negativity or even more positivity. For the linear scalar diffusion
equation $u=\Delta u$ in our torus, it is well-known that $u(t_*,x_*)=0$ for
some $t_*>0$ implies $u(t,x)=0$ for all $t>0$ and $x \in \bbT^d$. A similar
statement holds for the discretization on $\bbZ^d$. However, for our
reaction-diffusion system the situation is more complex, since some components
are may vanish (identically) while other are positive. The gradient structure
induced by the detailed-balance condition will provides enough control to
handle the arising degeneracies.

\begin{proposition}[Chain rule for the discrete setting]
\label{prop:ChainRule.Discr}\mbox{} \\
{\upshape a)} Consider $c\in \AC([0,T];X_N)$ such that 
\[
t \mapsto B(c(t),\dot c(t)) \ \text{ lies in } \rmL^1([0,T]).
\]
Then, $t \mapsto E_N(c(t))$ is absolutely continuous and we have the chain rule
formula
\begin{equation}
  \label{eq:CR.Discr}
  \frac{\rmd}{\rmd t} E_N(c(t)) = B(c(t),\dot c(t)) \ \text{ for a.a. } t \in
  [0,T]. 
\end{equation}
{\upshape b)} Consider a fixed vector $(c,F,J)$ such that 
$c_{i,k}=0$ implies $\big(\nablaDisc^*(F,J)\big)_{i,k}=0$, then 
\begin{equation}
  \label{eq:B.estim.RR*}
  \abs{B(c,\nablaDisc^*(F,J))} \leq R_N(c,F,J) + S_N(c)=R_{N,\mathrm{diff}}(c,F){+} 
R_{N,\mathrm{react}}(c,J){+} S_{N,\mathrm{diff}}(c){+} 
S_{N,\mathrm{react}}(c).
\end{equation}
{\upshape c)} We have the equality
\begin{equation}
  \label{eq:B.equal.RR*}
    B(c,\nablaDisc^*(F,J)) = R_N(c,F,J) + S_N(c)
\end{equation}
 if and only if 
\begin{equation}
  \label{eq:DiffReactFlux}
F_{i,k,e} = - \delta_i N^2 \sqrt{w^N_{i,k}w^N_{i,k+e}} \Big(\frac{c_{i,k}}{w^N_{i,k}}
  - \frac{c_{i,k+e}}{w^N_{i,k+e}}\Big) \quad \text{and} \quad 
J_{r,k}= \kappa_r( w^N_{k})^{(\alpha^r+\beta^r)/2}\Big(
\frac{c_k^{\alpha^r}}{(w^N_k)^{\alpha^r}}
-\frac{c_k^{\beta^r}}{(w^N_k)^{\beta^r}} \Big).
\end{equation}
In particular, \eqref{eq:B.equal.RR*} is equivalent to \eqref{eq:RDSysDisc}. 
\end{proposition}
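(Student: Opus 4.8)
The plan is to establish the three parts in order, using throughout that wherever $c_{i,k}>0$ one has $\rmD E_N(c)_{i,k}=\log(c_{i,k}/w^N_{i,k})=:\Lambda_{i,k}$, so that $B(c,v)=\langle\Lambda,v\rangle_N$ with the singular value at $c_{i,k}=0$ amputated by the convention $b(0,\cdot)=0$. For part (a) I treat each component separately. On $\{t:c_{i,k}(t)>0\}$ the map $t\mapsto\LB(c_{i,k}(t)/w^N_{i,k})\,w^N_{i,k}$ is differentiable for a.e.\ $t$ with derivative $\log(c_{i,k}/w^N_{i,k})\,\dot c_{i,k}$, while on $\{c_{i,k}=0\}$ the nonnegativity and absolute continuity of $c_{i,k}$ give $\dot c_{i,k}=0$ for a.e.\ such $t$, so there both this derivative and $b(c_{i,k}/w^N_{i,k},\dot c_{i,k})$ vanish. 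Thus $\tfrac{\rmd}{\rmd t}E_N(c)=B(c,\dot c)$ a.e.; to upgrade this to absolute continuity of $t\mapsto E_N(c(t))$ and hence \eqref{eq:CR.Discr}, I run the usual smoothing argument: replace $\LB$ near the origin by a Lipschitz regularisation, apply the classical chain rule along the absolutely continuous curve, and pass to the limit using the integrable majorant $|B(c,\dot c)|$ supplied by the hypothesis.

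For part (b) I discard the trivial case $R_N(c,F,J)+S_N(c)=\infty$ and assume finiteness. The decisive observation is that, since $\CC(s|0)=\chi_0(s)$, finiteness of $R_{N,\diff}$ forces $F_{i,k,e}=0$ on every edge with $c_{i,k}c_{i,k+e}=0$, and finiteness of $R_{N,\react}$ forces $J_{r,k}=0$ whenever $c_k^{(\alpha^r+\beta^r)/2}=0$; in particular the hypothesis $c_{i,k}=0\Rightarrow(\nablaDisc^\ast(F,J))_{i,k}=0$ then holds automatically. With this I apply discrete summation by parts (the duality of $\nablaDisc$ and $\nablaDisc^\ast$) to write $B(c,\nablaDisc^\ast(F,J))=\langle F,\nablaDDiff\Lambda\rangle_N+\langle J,\nablaDReact\Lambda\rangle_N$, in which every term carrying a singular value of $\Lambda$ multiplies a vanishing flux and drops out, consistently with $b(0,\cdot)=0$. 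On the surviving nondegenerate edges and reactions all weights are positive, so I apply the Young--Fenchel inequality $s\zeta\le\CC(s|w)+w\,\C^\ast(\zeta)$, whose first summand reproduces $R_{N,\diff}$ resp.\ $R_{N,\react}$. For the second summand I use the first identity in \eqref{eq:CStarandLog}, $\sqrt{ab}\,\C^\ast(\log a-\log b)=2|\sqrt a-\sqrt b|^2$, with $a=c_{i,k+e}/w^N_{i,k+e},\ b=c_{i,k}/w^N_{i,k}$ for diffusion and $a=(c_k/w^N_k)^{\alpha^r},\ b=(c_k/w^N_k)^{\beta^r}$ for reactions; cancelling $\sqrt{ab}$ against the weight identifies $\sum w\,\C^\ast(\cdot)$ exactly with $S_{N,\diff}(c)$ resp.\ $S_{N,\react}(c)$. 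Since $\C^\ast$ is even, repeating the estimate with $-\Lambda$ gives the matching lower bound and hence the two-sided estimate \eqref{eq:B.estim.RR*}.

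For part (c), equality in \eqref{eq:B.equal.RR*} is equivalent to equality in each of the nonnegative scalar Young--Fenchel gaps summed above, which by convex duality holds iff the flux attains the subdifferential, i.e.\ $F_{i,k,e}=w\,(\C^\ast)'(\nablaDDiff\Lambda_{i,k,e})$ and $J_{r,k}=w\,(\C^\ast)'(\nablaDReact\Lambda_{r,k})$ on the nondegenerate edges/reactions, the flux already vanishing on the degenerate ones. Using the second identity in \eqref{eq:CStarandLog}, $\sqrt{ab}\,(\C^\ast)'(\log a-\log b)=a-b$, with the same $a,b$ and cancelling $\sqrt{ab}$ against the weight, produces exactly the closed-form fluxes \eqref{eq:DiffReactFlux}. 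Finally, substituting \eqref{eq:DiffReactFlux} into the continuity equation $\dot c=\nablaDisc^\ast(F,J)=-\divDDiff F+\nablaDReact^\ast J$ and reading off the coefficients recovers \eqref{eq:RDSysDisc} term by term, giving the stated equivalence.

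I expect the main obstacle to lie in the degeneracy at vanishing concentrations in part (b): a priori both the summation-by-parts identity and the edge/reaction-wise Young inequality are only meaningful where $\Lambda=\log(c/w^N)$ is finite, and the substance of the argument is that the gradient structure---through the factorised weights $(c_{i,k}c_{i,k+e})^{1/2}$ and $c_k^{(\alpha^r+\beta^r)/2}$ together with $\CC(\cdot|0)=\chi_0$---automatically annihilates every flux adjacent to a zero, so that the truncation $b(0,\cdot)=0$ is consistent and no boundary contributions survive. The two remaining ingredients, the exact matching of $\C^\ast$ with the squared-root-difference slope and of $(\C^\ast)'$ with the flux, are merely the elementary identities collected in \eqref{eq:CStarandLog}.
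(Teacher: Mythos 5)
Your parts (a) and (b) are correct. Part (a) is essentially the paper's own argument (Lipschitz truncation of the logarithm near zero, the classical chain rule for the truncated energy, and dominated convergence with the majorant $|B(c,\dot c)|$, using that $\dot c_{i,k}=0$ a.e.\ where $c_{i,k}=0$). Part (b) takes a genuinely different route: the paper proves the bound for the shifted vector $c^\eps=(c_i{+}\eps)_{i\in I}$ by plain convex duality and then passes $\eps\searrow 0$, invoking the hypothesis $c_{i,k}=0\Rightarrow(\nablaDisc^*(F,J))_{i,k}=0$ for the convergence of the left-hand side; you instead work directly at $c$, observing that finiteness of $R_N$ together with $\CC(\cdot\,|\,0)=\chi_0$ annihilates every flux adjacent to a vanishing concentration, so that summation by parts and the termwise Young--Fenchel inequality can be run over the nondegenerate edges and reactions only. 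Your version is clean and has the nice byproduct that the part (b) hypothesis is automatic once the right-hand side is finite.

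Part (c), however, has a genuine gap: your characterization of equality ignores the \emph{degenerate slope terms}. Your chain of inequalities reads $|B|\le\Sigma_{\mathrm{nd}}\le R_N+S_N$, where $\Sigma_{\mathrm{nd}}$ collects $\CC(\cdot|w)+w\,\C^*(\cdot)$ over the nondegenerate edges/reactions; the second inequality adds the degenerate terms of $R_N$ (which are zero once the fluxes vanish) \emph{and} the degenerate terms of $S_N$, which need not vanish: an edge with $c_{i,k}=0<c_{i,k+e}$ contributes $2\delta_iN^2\sqrt{w^N_{i,k}w^N_{i,k+e}}\;c_{i,k+e}/w^N_{i,k+e}>0$ to $S_{N,\diff}$, and a reaction with $c_k^{\alpha^r}>0=c_k^{\beta^r}$ contributes positively to $S_{N,\react}$. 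Hence ``equality in each nondegenerate Young--Fenchel gap plus vanishing degenerate fluxes'' is \emph{not} equivalent to \eqref{eq:B.equal.RR*}. Concretely, take no reactions, one site with $c_{i,k}=0$, all other sites with $c_{i,l}=w^N_{i,l}$, and $F\equiv 0$, $J\equiv 0$: your criterion is satisfied (all nondegenerate logarithmic differences vanish, so $F=0$ does attain the subdifferential), and the part (b) hypothesis holds, yet $B=0=R_N$ while $S_N>0$, so \eqref{eq:B.equal.RR*} fails; note also that \eqref{eq:DiffReactFlux} prescribes a \emph{nonzero} flux on the degenerate edges in this example, so your criterion does not coincide with \eqref{eq:DiffReactFlux} either. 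This is precisely the point where the paper's proof invests its effort: splitting $I$ and $R$ into vanishing and positive parts, it shows that equality forces $S_N(c_\text{p},0)-S_{N,\text{p}}(c_\text{p})=\sum_{r\in R_\text{v}}2\kappa_r\big(c^{\alpha^r/2}-c^{\beta^r/2}\big)^2=0$, hence $c^{\alpha^r}=0=c^{\beta^r}$ on the degenerate part, and only then do the actual fluxes and the formulas \eqref{eq:DiffReactFlux} agree there (both being zero). Your part (c) needs this additional step to be complete.
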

\begin{proof} 
Without loss of generality, we may assume $w^N_{i,k}=1$. Moreover, we
may simplify the notation by only considering reactions, since for fixed $N$
the jumps of $c_i$ from $k$ to $k{+}e$ are simple exchange reactions with
reaction factor  $\delta_i N^2$. (Formally, one can define $I'=I\times
\bbZ^d_N$ and $R'=R\times \bbZ^d_N\cup I\times E\times \bbZ^d_N$.) Thus, in the
rest of the proof, we omit the occurrence of $N$ and $k\in \bbZ^d$, writing
$c(t)\in [0,\infty)^I$  and $\nablaDisc=\Gamma$. 

Part a). It suffices to consider only one species $c_i$ and omit the index $i$,
since $E$ and $B$ are both independent sums over $i$. 

From $c\in \AC([0,T])= \mathrm{W}^{1,1}([0,T])$ we have $c(t)\in [0,L]$ for some $L>0$. 
For $c\geq 0$ and $\eps\in(0,1)$ we define $\beta_\eps(c) = \max\{\log \eps,\log c\}$ with
$\beta_\eps(0)=\log\eps$ and $E_\eps(c) = \int_1^c \beta_\eps(s)\dd s$. With
$\beta(c)=\log c$ for $c>0$ and $\beta(0)=-\infty$ we have 
\[
\forall\, c\geq 0:\quad |\beta_\eps(c)|\leq |\beta(c)| \ \text{ and } \  
0\leq E_\eps(c) \nearrow E(c)  \text{ as } \eps \searrow  0. 
\]
Since $E_\eps$ is locally Lipschitz the chain rule holds: for  $ 0\leq s < t\leq
T$ we have
\begin{equation}
  \label{eq:CR.Ek}
  E_\eps(c(t))-E_\eps(c(s)) = \int_s^t b_\eps(c(r),\dot c(r)) \dd r 
\quad \text{with } b_\eps(c,v)=\beta_\eps(c)v. 
\end{equation}
We claim that $|b_\eps(c(r)),\dot c(r))| \leq |b(c(r)),\dot c(r))|$ a.e.\ in
$[0,T]$, namely on the set where $\dot c(r)$ exists. 
For $c(r)>0$ this follows immediately from $|\beta_\eps(c)|\leq |\log(c)|$. 
If $c(r)=0$ and $\dot c(r)$ exists, then $c(t)\geq 0$ for all $t\in[0,T]$ implies $\dot c(r)=0$;
and hence $b_\eps(c(r),\dot c(r))=0=b(c(r),\dot c(r))$. 

By assumption $r \mapsto |b(c(r),\dot c(r))| $ is an integrable majorant
for the integrand in \eqref{eq:CR.Ek}. Moreover, we have $b_\eps(c(r),\dot c(r))
\to b(c(r),\dot c(r))$ a.e. Hence, we are able to pass to the limit $\eps\to
0$ in \eqref{eq:CR.Ek}, and the chain rule formula \eqref{eq:CR.Discr} follows. \medskip

Part b). Estimate \eqref{eq:B.estim.RR*} follows from the duality of
$R_N$ and $R_N^*$ in the case $c_i\geq \delta>0$, since $\rmD E_N(c)$ is
well defined and $S_N(c)= R_N^*(c,-\Gamma \rmD E_N(c))$. 

For the general case we introduce $c^\eps = (c_i{+}\eps)_{i\in I}$ for which
\eqref{eq:B.estim.RR*} holds. 
We can pass to the limit $\eps \searrow 0$ by noting the
convergences on the right-hand side, as $R_N$ and $S_N$ are continuous in
$c$ (for fixed $(F,J)$), and on the left-hand side as well. For the latter we use
the continuity of $c \mapsto \log c$ if $c_i>0$ and
$(\Gamma^* J)_i=0$ if $c_i=0$.  

Part c). The case $c_i\geq \delta >0$ is trivial since \eqref{eq:B.estim.RR*}
implies
\[
\rmD E_N(c) \cdot \Gamma^*J \leq R_N(c,J) + R_N^*\big(c,-\Gamma \rmD E_N(c)\big).
\]
By strict convexity of $R_N(c,\cdot)$ we have equality if and only if 
$J= - \rmD_\xi R_N^*\big(c,-\Gamma \rmD E_N(c)\big)$. Using
\eqref{eq:CStarandLog} we find the desired relation in \eqref{eq:DiffReactFlux}. 

For the general case, fix a vector $(c,J)$. We decompose the sets $I$ and $R$ into vanishing and
positive parts:
\[
I_\text{v}\coloneqq\{\,i\in I\,|\, c_i=0\,\}, \quad I_\text{p}\coloneqq I\setminus I_\text{v}, \quad 
R_\text{v}\coloneqq  \{\, r\in R\,|\,\exists\,i\in I_\text{v}: \alpha_i^r{+}\beta_i^r>0\,\}, 
\quad R_\text{p}\coloneqq R\setminus R_\text{v}.
\]
The equality \eqref{eq:B.equal.RR*} implies that the right-hand side must be
finite. As $R_N(c,J)$ contains the terms
$\CC\big(J_r|\kappa_r c^{(\alpha^r{+}\beta^r)/2}\big)$ and
$c^{(\alpha^r{+}\beta^r)/2}=0$ for all $r \in R_\text{v}$, we conclude $J_r=0$ for all
$ r \in R_\text{v}$.

Rearranging the index sets $I$ and $R$, we can write the $c=(c_\text{p},c_\text{v})$ and
$J=(J_\text{p},J_\text{v})$ with $c_\text{v}=0$ and $J_\text{v}=0$. Writing $ \rmD_\text{p}
E(c_\text{p})= \big((\log c_i)_{i\in I_\text{p}},(0)_{i\in I_\text{v}}\big)$, the desired equality
\eqref{eq:B.equal.RR*} reduces to 
\begin{equation}
  \label{eq:ReducedFenchel}
\Gamma \rmD_\text{p} E_N(c_\text{p}) \cdot (J_\text{p},0) = R_{N,\text{p}}\big(c_\text{p},(J_\text{p},0)\big)+S_N(c_P,0),  
\end{equation}
where $R_{N,\text{p}}$ is defined as $R_N$ up to reducing the sum to $r\in R_\text{p}$, which implies that $c_\text{v}$ does not appear any more. 
Since $S$ is the sum over all reactions we have 
\[
S_N(c_\text{p},0) \geq S_{N,\text{p}}(c_\text{p})\coloneqq\sum_{r\in R_\text{p}}2\kappa_r\big(c^{\alpha^r/2} 
{-} c^{\beta^r/2}\big)^2
= R^*_{N,\text{p}}\big(c_\text{p}, - \Gamma \rmD_\text{p} E_N(c_\text{p})\big) . 
\]
Replacing $S_N$ by $S_{N,\text{p}}$, the convex duality of $R_N$ and
$R^*_N$ implies 
\[
(J_\text{p},0)= - \rmD_\xi R^*_{N,\text{p}}\big(c_\text{p}, - \Gamma \rmD_\text{p} E_N(c_\text{p}) \big) .
\]
Moreover, the equality in \eqref{eq:ReducedFenchel} holds if and only if 
\[
S_N(c_P,0)- S_{N,\text{p}}(c_\text{p})= \sum_{r\in R_\text{v}} 2\kappa_r\big(c^{\alpha^r/2} 
{-} c^{\beta^r/2}\big)^2 =0 .
\] 
Thus, we find $c^{\alpha^r}=0=  c^{\beta^r}$ for all $r \in R_V$ (since at
least one is $0$ by definition and the difference vanishes by the last relation). 
Therefore, \eqref{eq:DiffReactFlux} holds true also in the general case. 
\end{proof} 

Proposition \ref{prop:ChainRule.Discr} is exactly what we need to show that EDB
solutions in the sense of Definition \ref{def:sol_disc} are ODE solutions of
the discrete system \eqref{eq:RDSysDisc}, which will complete the proof of the 
discrete Energy-Dissipation Principle. 
 
\begin{proof}[Proof of Theorem \ref{thm:DiscEDB.ODE}.] 
\label{ss:Proof.Disc.EDP} 
The direction from \eqref{eq:RDSysDisc} to a EDB solution is 
classical, as the map $t \mapsto c(t)$ is in fact $\rmC^1([0,T];\calM(X_N))$. 

For the opposite direction, we first observe that $L_N(c,F,J)=0$ implies
$D_N(c,F,J)< \infty $. By \eqref{eq:B.estim.RR*} we see that $t\mapsto
 B(c(t), \dot c(t))$ lies in $\rmL^1([0,T])$. Hence, the chain rule
\eqref{eq:CR.Discr} holds. Thus, we have 
\begin{align*}
0&=L_N(c,F,J) = \int_0^T\big( B(c,\dot c)+R_N(c,F,J)+ S_N(c)\big)
\dd t.
\end{align*} 

Using $(c,F,J)\in \lCE_N$ and \eqref{eq:B.estim.RR*}, the integrand is
non-negative, hence we conclude that the integrand has to vanish a.e.\ in
$[0,T]$. Thus, $F$ and $J$ are given by the formulas in
\eqref{eq:DiffReactFlux}. Inserting this into the discrete continuity equation 
$\dot c +\nablaDisc^*(F,J)=0$ gives exactly the desired ODE
\eqref{eq:RDSysDisc}. 
\end{proof}

\subsection{Chain rule for reaction-diffusion system on the torus}
\label{sec:ChainRuleCont}

Before we prove Theorem \ref{thm:CRContinuous}, we first collect and prove 
two lemmas that we need in the following. First, we have the following 
inequality for the perspective function $\CC$. 

\begin{lemma}
\label{lem:PerspFcnLip}
Let $\sigma>0$. Then we have
\begin{equation}
\forall\, j\in\R \ \forall\, a,b\geq\sigma: 
\quad|\CC(j|a)-\CC(j|b)|\leq  { \frac{2|j|}{\sigma}\,|a {-}b|. }
\label{eq:AuxInequForC}
\end{equation}
\end{lemma}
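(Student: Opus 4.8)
The plan is to write the difference $\CC(j|a)-\CC(j|b)$ as a one-dimensional integral of $\partial_w\CC(j|w)$ and then control the integrand pointwise by the bound $|\partial_w\CC(j|w)|\le 2|j|/w$, which is essentially already recorded as part of \eqref{eq:CC.prop.b}.

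First I would dispose of the trivial case $j=0$: since $\C(0)=0$ we have $\CC(0|w)=w\C(0)=0$ for all $w>0$, so both sides of \eqref{eq:AuxInequForC} vanish. For $j\neq 0$ the map $w\mapsto\CC(j|w)=w\,\C(j/w)$ is smooth on $(0,\infty)$, so the fundamental theorem of calculus gives
\[
\CC(j|a)-\CC(j|b)=\int_b^a \partial_w\CC(j|w)\,\dd w .
\]

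The key step is the pointwise estimate of the integrand. Using the explicit derivative formula from \eqref{eq:CC.prop.b},
\[
\partial_w\CC(j|w)=4-2\sqrt{(j/w)^2+4}\le 0,
\]
so that $|\partial_w\CC(j|w)|=2\sqrt{(j/w)^2+4}-4$. Setting $\tau=|j|/w\ge 0$, the elementary inequality $\sqrt{\tau^2+4}\le \tau+2$ (equivalent after squaring to $0\le 4\tau$) yields
\[
|\partial_w\CC(j|w)|\le 2\tau=\frac{2|j|}{w}.
\]

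Finally, since both $a,b\ge\sigma$, the whole interval of integration is contained in $[\sigma,\infty)$, on which $2|j|/w\le 2|j|/\sigma$. Hence
\[
|\CC(j|a)-\CC(j|b)|\le \int_{\min\{a,b\}}^{\max\{a,b\}}|\partial_w\CC(j|w)|\,\dd w\le \frac{2|j|}{\sigma}\,|a-b|,
\]
which is the claim. There is no real obstacle in this argument; the only point requiring attention is that the constant must be $2|j|/\sigma$ rather than the sharper but $w$-dependent $2|j|/w$, and this is precisely why the hypothesis $a,b\ge\sigma$ is needed to obtain a uniform Lipschitz bound in $w$.
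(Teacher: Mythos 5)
Your proof is correct and follows essentially the same route as the paper: both write the difference as $\int \partial_w \CC(j|w)\,\dd w$ via the fundamental theorem of calculus, use the explicit formula $\partial_w\CC(j|w)=4-2\sqrt{(j/w)^2+4}$ from \eqref{eq:CC.prop.b}, bound its absolute value by $2|j|/w\le 2|j|/\sigma$, and integrate. Your write-up merely makes explicit the elementary inequality $\sqrt{\tau^2+4}\le\tau+2$ behind the bound $|\C(r)-r\C'(r)|\le 2|r|$ that the paper uses without comment.
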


\begin{proof}
    We observe that $\partial_a \CC(j|a)=m(j/a)$ with
    $m(r)=\C(r)-r\C'(r)= 4 - 2\sqrt{4{+}r^2}\leq 0$. Then, using $|m(r)|\leq 2|r|$ and
    $\CC(j|b)- \CC(j|a)= \int_a^b m(j/y) \,\mathrm d y$, the result follows.
\end{proof}

The lemma now helps to bound the difference once we have a bound on  $\rho$ 
and $j\in\rmL^{\C}([0,T]\times Y_\react)$. For this we recall the Hardy-Littlewood maximal function (see e.g.\ \cite{SteinHarmonicAnalysis}) which for a given function $g:\R^{d}\to \R$ is defined by
\[
\rmM g(x)=\sup_{B\ni x}\frac{1}{|B|}\int_{B}|g(y)|\dd y,
\]
where $B\subset\R^{d}$ are balls including $x$. It follows that $\sup_{\eps>0}|g*k_{\eps}(x)|\leq \rmM g(x)$ for any measurable $g$. Regarding integration, there are classical results, showing that for $1<p\leq \infty$ it holds $\rmM g\in\rmL^{p}$ if $g\in\rmL^{p}$. 
In the limiting case $p=1$, one has the weaker statement 
$j\in\L\log\L$ (i.e.\ $\int_\OmegaT \C(|j|) \dd x \dd t< \infty$) if and only if 
$\rmM j\in\rmL^{1}$, see \cite{Stei69NCLL}. 

In the following proposition we will combine Lemma \ref{lem:PerspFcnLip} with the 
estimate through the maximal function. For this, we need the magical estimate 
\eqref{eq:CC.prop.d}, where the assumption $\frac12|\alpha^r{+}\beta^r|_1 \lneqq \pcrit$ 
is crucial to obtain $ \rho^{(\alpha^r+\beta^r)/2} \in \rmL^q(\OmegaT)  $ 
with $q \gneqq 1$. Unfortunately, a superlinear estimate for 
$|\rho|^{\pcrit}$ as obtained in Proposition \ref{prop:integrability} would not be 
enough as is shown by the counterexample in Remark \ref{rem:App.Counterexa}.

The following result can also be seen as a commutator estimate, since it is essential
to estimate $(\rho{*}k_\eps)^{(\alpha^r+\beta^r)/2}$ against $ \big(\rho^{(\alpha^r+\beta^r)/2}\big)*k_\eps$, where $k_\eps$ is a smoothing kernel.

\begin{proposition}[Commutator estimate]
\label{prop:React.Commutator.Estim}
 Assume $\frac12|\alpha^r{+}\beta^r|_1 \lneqq p$. 
 Consider $\rho\in\rmL^p ([0,T]\times\bbT^d)$ and assume $\rho_i\geq\sigma>0$ a.e.\ in $\OmegaT$ for all $i\in I$ and  $\iint_\OmegaT \CC(j_r| 
 \rho^{\left(\alpha^r+\beta^r\right)/2})\dd x\dd t<\infty$. 
Let $k_{\eps}$ be a mollifier approximating the identity, and $j^{\eps}_r\coloneqq j_r*k_{\eps}$,
$\rho^{\eps}\coloneqq\rho*k_{\eps}$. 
Then, we have 
\begin{align}
  \label{eq:LimSup.CC}
    \limsup_{\eps\to0}\iint_\OmegaT \CC(j_r^{\eps}|(\rho^{\eps})^{\left(\alpha^r+\beta^r\right)/2})\dd x\dd t\leq \iint_\OmegaT \CC(j_r|\rho^{\left(\alpha^r+\beta^r\right)/2})\dd x\dd t.
\end{align}    
\end{proposition}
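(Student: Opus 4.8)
Throughout fix the reaction index $r$, abbreviate $\nu\coloneqq(\alpha^r+\beta^r)/2$, $w\coloneqq\rho^\nu$ and $q\coloneqq p/|\nu|_1>1$ (the strict assumption is exactly what buys $q>1$). The plan is to split the left-hand integrand through the \emph{correctly} mollified weight $\bar w^\eps\coloneqq w*k_\eps=(\rho^\nu)*k_\eps$ and compare it with the weight $w^\eps_\flat\coloneqq(\rho*k_\eps)^\nu$ that actually appears. First I would dispose of $\bar w^\eps$ by convexity: since $(s,a)\mapsto\CC(s|a)$ is jointly convex by~\eqref{eq:CC.prop.a} and $k_\eps\,\dd x$ is a probability measure, Jensen's inequality gives pointwise $\CC(j^\eps|\bar w^\eps)=\CC(j*k_\eps|w*k_\eps)\le(\CC(j|w))*k_\eps$, so integrating over the torus (where $\int k_\eps=1$) yields $\iint_\OmegaT\CC(j^\eps|\bar w^\eps)\dd x\dd t\le\iint_\OmegaT\CC(j|w)\dd x\dd t$. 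It then remains to show that replacing $\bar w^\eps$ by $w^\eps_\flat$ costs nothing in the limit $\eps\to0$.

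For this I would invoke Lemma~\ref{lem:PerspFcnLip}. Because $\rho_i\ge\sigma>0$ and mollification is averaging, both weights are bounded below by $\tilde\sigma\coloneqq\sigma^{|\nu|_1}>0$, so
\[
\big|\CC(j^\eps|w^\eps_\flat)-\CC(j^\eps|\bar w^\eps)\big|\le\frac{2|j^\eps|}{\tilde\sigma}\,|g^\eps|,\qquad g^\eps\coloneqq w^\eps_\flat-\bar w^\eps.
\]
The weight commutator $g^\eps$ tends to $0$ a.e.\ and in $L^q(\OmegaT)$: continuity of the power map on $[\sigma,\infty)^I$ with $\rho^\eps\to\rho$ in $L^p$ gives $w^\eps_\flat\to w$, while $\bar w^\eps\to w$ is standard mollifier convergence, both in $L^q$ since $w\in L^q$ (here $q|\nu|_1=p$). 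Moreover $|g^\eps|\le(\rmM\rho)^\nu+\rmM w\in L^q$, using boundedness of the Hardy--Littlewood maximal operator on $L^p$ ($p>1$) together with a generalized Hölder inequality with matched exponents. Everything thus reduces to the single estimate $\iint_\OmegaT|j^\eps|\,|g^\eps|\dd x\dd t\to0$.

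This last estimate is the heart of the matter and the step I expect to be the main obstacle, because of a genuine integrability mismatch. The coupling $\CC(j|w)\in L^1$ together with the magical estimate~\eqref{eq:CC.prop.d} (exponent $q$, with $w^q=\rho^{q\nu}\in L^1$ as $q|\nu|_1=p$) yields only $\C(j)\in L^1$, i.e.\ $j\in L\log L$; hence by the $L\log L$ theory of the maximal function \cite{Stei69NCLL} one has $\sup_\eps|j^\eps|\le\rmM j\in L^1$ but no integrability beyond $L^1$, whereas $g^\eps$ is only $L^q$-small with $q$ possibly close to $1$, so a naive Hölder pairing is unavailable. The resolution is to use the coupling pointwise in place of the crude bound $|j^\eps|\le\rmM j$: on $\{|j^\eps|>\bar w^\eps\}$ the lower bound in~\eqref{eq:Cprime.bound} gives $\CC(j^\eps|\bar w^\eps)\ge\tfrac12|j^\eps|\log(1{+}|j^\eps|/\bar w^\eps)\ge\tfrac{\log2}{2}|j^\eps|$, so $|j^\eps|\lesssim\CC(j^\eps|\bar w^\eps)\le(\CC(j|w))*k_\eps$, while on the complement $|j^\eps|\le\bar w^\eps\le\rmM w$; altogether $|j^\eps|\lesssim\rmM w+(\CC(j|w))*k_\eps$, whose first summand lies in $L^q$ and whose second is equi-integrable in $L^1$. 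Pairing this against $g^\eps$ and carefully balancing the $L^q$-smallness of the commutator against these two pieces --- splitting off the equi-integrable part by a Vitali/dominated-convergence argument and exploiting the maximal-function domination $|g^\eps|\le(\rmM\rho)^\nu+\rmM w$ to build an integrable majorant on the reactive flux --- gives $\iint_\OmegaT|j^\eps||g^\eps|\dd x\dd t\to0$. Here the strict inequality $\tfrac12|\alpha^r+\beta^r|_1<\pcrit$ (hence $q>1$) is indispensable: at the borderline $q=1$ only the Orlicz refinement $w\in L^1(\log L)^\eta$ coming from Proposition~\ref{prop:integrability} survives, and Remark~\ref{rem:App.Counterexa} shows that this is insufficient. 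Combining the two steps yields $\limsup_{\eps\to0}\iint_\OmegaT\CC(j^\eps|w^\eps_\flat)\dd x\dd t\le\iint_\OmegaT\CC(j|w)\dd x\dd t$, which is the claim.
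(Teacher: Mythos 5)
Your Steps 1 and 2 are sound and use the same ingredients as the paper: Jensen's inequality via the joint convexity \eqref{eq:CC.prop.a}, the Lipschitz bound of Lemma \ref{lem:PerspFcnLip} with lower bound $\sigma^{|\nu|_1}$, and the magical estimate \eqref{eq:CC.prop.d} giving $\C(j)\in\rmL^1$, hence $\rmM j\in\rmL^1$ and the majorant $|j^\eps|\le\rmM j$. The reduction to proving $\iint_\OmegaT|j^\eps||g^\eps|\dd x\dd t\to 0$ is therefore valid. But Step 3 --- which you yourself flag as the heart of the matter --- contains a genuine gap, and your proposed splitting does not close it. For the term $\iint_\OmegaT \rmM w\,|g^\eps|\dd x\dd t$ you only know $\rmM w\in\rmL^q$ and that $g^\eps$ is small in $\rmL^q$ with majorant $G=(\rmM\rho)^\nu+\rmM w\in\rmL^q$; the product of two $\rmL^q$ functions lies merely in $\rmL^{q/2}$, so whenever $q<2$ (e.g.\ the binary reaction $\rmX_1{+}\rmX_2\rightleftharpoons\rmX_3$ in $d=2$, where $q=\pcrit/|\nu|_1=4/3$) there is no H\"older pairing (that would require $\rmM w\in\rmL^{q'}$) and no integrable majorant, since $\rmM w\cdot G\ge(\rmM w)^2\notin\rmL^1$ in general; hence neither Vitali nor dominated convergence applies. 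For the term $\iint_\OmegaT B_\eps|g^\eps|\dd x\dd t$ with $B_\eps=(\CC(j|w))*k_\eps$, equi-integrability of $(B_\eps)_\eps$ does not survive multiplication by the unbounded factor $|g^\eps|$: the mollified $\rmL^1$ mass of $\CC(j|w)$ can concentrate exactly where $G$ blows up, and the natural majorant $\rmM(\CC(j|w))\cdot G$ fails because the maximal function of an $\rmL^1$ function is only in weak $\rmL^1$. So the integrability mismatch you identified is real and remains unresolved by your argument.

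The missing idea --- and how the paper closes precisely this step --- is to truncate the \emph{weight} before applying the Lipschitz lemma, which costs nothing by the monotonicity \eqref{eq:CC.prop.b}: for fixed $M$ one has $\CC(j^\eps|(\rho^\eps)^\nu)\le \CC(j^\eps|M\wedge(\rho^\eps)^\nu)$, and one then compares with $\CC(j^\eps|(w\wedge M)*k_\eps)$ instead of $\CC(j^\eps|w*k_\eps)$. After truncation both weights lie in $[\sigma^{|\nu|_1},M]$, so the Lipschitz error is at most $\tfrac{2}{\sigma^{|\nu|_1}}\,|j^\eps|\,M\le \tfrac{2M}{\sigma^{|\nu|_1}}\,\rmM j\in\rmL^1$, a fixed integrable majorant; since the difference of the truncated weights tends to $0$ a.e., dominated convergence kills the error as $\eps\to0$ for each fixed $M$. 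Jensen applied to $\CC(j*k_\eps|(w\wedge M)*k_\eps)$ then gives the upper bound $\iint_\OmegaT\CC(j|w\wedge M)\dd x\dd t$, and the truncation is removed only at the very end, letting $M\to\infty$ with the majorant $\CC(j|w\wedge M)\le\max\{\C(j),\CC(j|w)\}\in\rmL^1$. With this reorganization --- truncate first, compare and mollify second, remove the truncation last --- your argument becomes the paper's proof; without it, Step 3 fails for all exponents $q\in(1,2)$.
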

\begin{proof} 
We drop the fixed index $r$ for $j_r, \: \alpha_r$, and $\beta_r$ for the remainder 
of the proof  and use the short-hand notations
\[
\gamma \coloneqq \frac12(\alpha{+}\beta), \quad a\coloneqq\rho^\gamma , \quad 
a_{\eps} \coloneqq (\rho^\eps)^\gamma. 
\]
Using $ |\gamma|_1\lneqq p $ we have $ a \in \rmL^q(\Omega_T)$ with $q= p/|\gamma|_1>1$. 
Thus, we can use the magical property \eqref{eq:CC.prop.d} 
of $\C$ and find 
\[
\iint_{\OmegaT} \C(j) \dd x \dd t \leq  \iint_\OmegaT \Big( \frac{q}{q{-}1} \CC(j|a) 
+\frac4{q{-}1}  a^ q \Big) \dd x \dd t  < \infty. 
\]
His implies $j\in\L\log\L(\OmegaT)$, such that its Hardy-Littlewood maximal 
function (done in the space-time domain $\OmegaT$) is integrable, i.e., 
$\rmM j\in\rmL^{1}([0,T]\times Y_\react)$, see \cite{Stei69NCLL}.
Thus, we find the pointwise estimate 
\begin{equation}
    \label{eq:L1.majorand}
\forall\: \eps>0: \qquad | j_\eps (t,x) |  \leq  \rmM j(t,x)  \quad 
\text{almost everywhere in } \OmegaT,
\end{equation}
this means that the family $ (j_\eps)_\eps$ has a $\rmL^1$ majorant. 

Using the shorthand $x\wedge y= \min\{ x,y\}$ for $x,y\in \R$, the monotonicity of 
$a \mapsto \CC(s|a)$, and the bound \eqref{eq:AuxInequForC} for the derivative in Lemma \ref{lem:PerspFcnLip}, we find for any $M>0$ that 
\begin{align}
\label{eq:CC.estim.M}
  \CC(j_\eps| a_\eps) &\leq \CC(j_\eps| M\wedge a_\eps) 
                       \leq \CC(j_\eps| M\wedge b_\eps) + g_\eps
    \\
    &\text{with } g_\eps = \frac2{\sigma^{|\gamma|_1}} \,|j_\eps| \:\big| (M\wedge b_\eps)
     - (M\wedge a_\eps) \big| \ \text{ and } \ b_{\eps,M} = (a\wedge M)*k_\eps.  \nonumber
\end{align}
Using $b_{\eps,M} \to a\wedge M $ and $a_\eps \wedge M\to a \wedge M$ strongly in $\rmL^q  (\OmegaT) $  and weakly-$^\ast$ in $\rmL^\infty(\OmegaT)$ as $\eps\to 0$ together with 
\eqref{eq:L1.majorand}, Lebesgue's dominated convergence theorem gives 
$G_\eps =\iint_\OmegaT | g_\eps| \dd x \dd t \to 0$. 

Using $j_\eps = j*k_\eps$, $ b_\eps = a*k_\eps$ and the joint convexity of $(j,a)\mapsto 
\CC(j|a)$ allows us to apply Jensen's inequality. Hence, integrating the estimate
\eqref{eq:CC.estim.M} over $ \OmegaT $ we find  
\begin{align*}
\iint_\OmegaT \CC(j_{\eps}|a_\eps)\dd x\dd t  &
\leq \iint_\OmegaT \CC(j_{\eps}|a_\eps\wedge M )\dd x\dd t  
\leq \iint_\OmegaT \big( \CC(j_{\eps}|b_{\eps,M}) + g_\eps \big)\dd x\dd t  
\\
& \overset{\text{Jensen}}\leq \iint_\OmegaT \CC(j|a \wedge M) \dd x\dd t  + G_\eps.   
\end{align*}
Keeping $M$ fixed and taking the upper limit $\eps\to 0$ we find 
\[
\forall \, M\geq 1: \quad \limsup_{\eps\to 0} \iint_\OmegaT 
\CC(j_{\eps}|a_\eps)\dd x\dd t  \leq  \iint_\OmegaT \CC(j|a \wedge M) \dd x\dd t .
\]
To perform the limit $M\to \infty$, we use $ \CC(j|a \wedge M) \leq \CC(j| a\wedge 1) 
\leq \max\big\{ \CC(j | 1), \CC( j| \rho^\gamma)\big\} \in \rmL^1(\OmegaT)$ 
due to the assumption and $\CC(j|1)=\C(j) \in \rmL^1(\OmegaT)$. Hence, by dominated convergence the limit $M\to \infty$ provides the desired estimate \eqref{eq:LimSup.CC}.
\end{proof}

\begin{remark}[Convexity instead of commutator estimate]
The above commutator estimate can be avoided if the function $(\rho,s) \mapsto \CC(s| \rho^\gamma)$ is jointly convex. Then , the result follows simply by applying 
Jensen's inequality for convolutions, i.e., $\iint \CC( J{*}k_\eps|(\rho{*}k_\eps)^\gamma) \dd x \dd t \leq \iint \CC( j|\rho^\gamma) \dd x \dd t$. 
This argument is usually used for linear reactions, see e.g.\ \cite{Step21EDPLRDS,PRST22,HT23}. 

Indeed, the joint convexity holds if and only if $|\gamma|_1\leq 1$. Since 
$\CC(s|g(\rho))$ is the Legendre-Fenchel transorm of $g(\rho)\C^*(\zeta)$, we have 
joint convexity if and only if $\rho \mapsto g(\rho)$ is concave. For 
$g(\rho)=\rho^\gamma$ the second derivative $\rmD^2 g$ has the explicit form 
\[
\rmD^2 g(\rho) = - \rho^\gamma\, \diag(1/\rho_i)_I \,A(\gamma)\, \diag(1/\rho_i)_I 
\quad \text{with } A(\gamma) = \diag(\gamma) - \gamma {\otimes}\gamma. 
\]
Hence, $g$ is concave if and only if $A(\gamma)$ is positive semi-definite. However, we have 
\[
b\cdot A(\gamma) b = \sum_{i\in I}\gamma_i b_i^2 -\Big(\sum_{i\in I} \gamma_ib_i\Big)^2 \geq 
 \sum_{i\in I}\gamma_i b_i^2 -\Big(\sum_{\bar\imath\in I} \gamma_{\bar\imath}\Big)\,\Big(\sum_{i\in I} \gamma_i b_i\Big)
 = \Big( 1 -\sum_{\bar\imath\in I} \gamma_{\bar\imath}\Big) \sum_{i\in I}\gamma_i b_i^2 .
\]
Hence, $\sum_{\bar\imath\in I}\gamma_{\bar\imath}\leq 1$ implies the desired concavity. However, considering 
the function $t\to g(tc)= t^\lambda c^\gamma$ gives $\lambda =\sum_{\bar\imath\in I}\gamma_{\bar\imath} $, and concavity implies $\lambda\leq 1$.
\end{remark}

Putting the above results together, we can now prove Theorem \ref{thm:CRContinuous}.

\begin{proof}[Proof of Theorem \ref{thm:CRContinuous}]
The proof is performed in several steps. First, 
we regularize and shift the density by a positive constant and show the chain rule 
for that situation. Then follows 
the harder part of estimating the limits. For this we rely on 
Proposition~\ref{prop:React.Commutator.Estim}.\\
{\bf 1. Step (Regularization): } We note that from the bound on the energy and 
dissipation, the curve  $t\mapsto\rho(t)$ is absolutely continuous with values 
in $(\mathrm{W}^{1,\infty}(X))^*$ and it has a Lebesgue density $\rho\dd x$ for 
almost all $t\in[0,T]$. Furthermore, we have  $\rho\in \rmL^{\pcrit}([0,T]\times X)$,
$f\in\rmL^1(0,T,Y_\diff)$, $j\in\rmL^1([0,T]\times Y_\react)$ by Proposition~\ref{prop:strong_compactness} and Proposition~\ref{prop:FluxBound}.  Given $\sigma>0$ and a mollifier 
$(k_\eps)_{\eps>0}$, we define the component-wise shifted and regularized trajectory 
\[
\rho^{\eps,\sigma}\coloneqq(\rho+\sigma)\ast k_{\eps},
\]
and correspondingly the regularized fluxes $f^{\eps}\coloneqq f\ast k_{\eps}$ and
$j^{\eps} \coloneqq j\ast k_{\eps}$. Clearly, $(\rho^{\eps, \sigma}, f^{\eps},
j^{\eps})\in\CE$, where we have used that for the reactions the stoichiometric 
matrix $\Gamma^*$ commutes with the mollification. Moreover, we have
\[
\rho^{\eps,\sigma}\to\rho\text{ in }\rmL^{1}([0,T]\times X),\quad 
j^{\eps}\to j\text{ in }\rmL^{1}([0,T]\times Y_\diff),\quad 
f^{\eps}\to f\text{ in }\rmL^{1}([0,T]\times Y_\react).
\]
{\bf 2. Step (Chain rule for regularized curve): } Now, we show that for fixed 
$\eps,\sigma>0$ the trajectory $t\mapsto{\cal E}(\rho^{\eps,\sigma}(t))$
is absolutely continuous and satisfies the upper chain rule. For this,
we first note that there is a constant $M_{\eps}>0$ such that 
$\|\rho^{\eps,\sigma}\|_{\rmL^{\infty}([0,T]\times\bbT^{d})}\leq M_{\eps}$
and we have $\rho^{\eps,\sigma}\geq\sigma>0$. To show absolute continuity,
we fix $s,t\in[0,T]$, and since on $[\sigma,M_{\eps}+\sigma]$ the Boltzmann
function $[\sigma,M_{\eps}+\sigma]\ni r\mapsto\LB(r)\in[0,\infty)$
is Lipschitz continuous, i.e., there is $L_{\sigma,\eps}>0$ such that
$\forall\, r_{1},r_{2}\in[\sigma,M_{\eps}+\sigma]:\quad|\LB'(r_{1}) 
-\LB^{'}(r_{2})|\leq L_{\sigma,\eps}|r_{1}-r_{2}|,$ we compute
\begin{align*}
{\cal E}(\rho^{\eps,\sigma}(t))-{\cal E}(\rho^{\eps,\sigma}(s)) 
& \leq\sum_{i\in I}\int_{\bbT^{d}}|\LB(\rho_i^{\eps,\sigma}(t))- 
 \LB(\rho_i^{\eps,\sigma}(s)|\dd x
 \\
& \leq L_{\sigma,\eps}\sum_{i\in I}\int_{\bbT^{d}}|\rho_i^{\eps,\sigma}(t)-
 \rho_i^{\eps,\sigma}(s)|\dd x =L_{\sigma,\eps}\sum_{i\in I}\int_{\bbT^{d}} 
  |\rho_i^{\eps}(t)-\rho_i^{\eps}(s)|\dd x.
\end{align*}
The mollifier $k_{\eps}$ is a test function in $\rmC^\infty_c$
with a (possibly bad) Lipschitz constant $C_{\eps}$, which implies 
\[
{\cal E}(\rho^{\eps,\sigma}(t))-{\cal E}(\rho^{\eps,\sigma}(s))\leq 
 C_{\eps}L_{\sigma,\eps}\|\rho(t)-\rho(s)\|_{\left(W^{1,\infty}\right)^*}.
\]
Hence, $t\mapsto{\cal E}(\rho^{\eps,\sigma})$ is absolutely continuous,
and we obtain by the differentiability of $r\mapsto\LB(r)$
on $[\sigma,M_{\eps}+\sigma]$ that
\begin{align*}
\frac{\rmd}{\rmd t}{\cal E}(\rho^{\eps,\sigma}(t)) 
& =\sum_{i\in I}\int_{\bbT^{d}} 
   \log(\rho^{\eps,\sigma}_i(t)/\omega_i)\partial_t{\rho}_i^{\eps,\sigma}(t)\dd x
  =\langle \log(\rho^{\eps,\sigma}(t)/\omega),
    \left(-\mathrm{div}f^{\eps}(t)+\Gamma^{*}j^{\eps}(t)\right)\rangle
\\
 & =\langle \nabla\log(\rho^{\eps,\sigma}(t)/\omega), 
     f^{\eps}(t)\rangle+  \langle \Gamma\log(\rho^{\eps,\sigma}(t)/w), j^{\eps}(t)\rangle ,
\end{align*}
which by integrating in time leads to
\[
{\cal E}(\rho^{\eps,\sigma}(t))-{\cal E}(\rho^{\eps,\sigma}(s))=\int_{s}^{t} 
\langle \nabla\log(\rho^{\eps,\sigma}(r)/\omega), 
     f^{\eps}(r)\rangle+  \langle \Gamma\log(\rho^{\eps,\sigma}(r)/w), j^{\eps}(r)\rangle \dd r.
\]
Here, we have used the chain rule with the nice test function $\nabla\log(\rho^{\eps,\sigma}(r)/w)$.
In particular, by Legendre duality we obtain
\begin{align*}
{\cal E}(\rho^{\eps,\sigma}(t))-{\cal E}(\rho^{\eps,\sigma}(s)) & =-\int_{s}^{t}\langle \nabla\log(\rho^{\eps,\sigma}(r)/\omega), 
     -f^{\eps}(r)\rangle+  \langle \Gamma\log(\rho^{\eps,\sigma}(r)/w), -j^{\eps}(r)\rangle \dd r\\
 & \geq-\int_{s}^{t}{\cal R}_\diff(\rho^{\eps,\sigma},f^{\eps})+{\cal S}_\diff(\rho^{\eps,\sigma})+{\cal R}_\react(\rho^{\eps,\sigma},j^{\eps})+{\cal S}_\react(\rho^{\eps,\sigma})\dd r.
\end{align*}
Hence, it follows the chain rule inequality for the regularized curve that $\mathcal{L}^{[s,t]}(\rho^{\eps,\sigma},f^\eps,j^\eps)\geq 0$.\\
{\bf 3. Step (Limit $\sigma\to0$ and $\eps\to0$): } First, we observe that convergence of the energies is clear due to the convexity. Hence, it suffices to show that 
\begin{align*}
\limsup_{\sigma\to0}\limsup_{\eps\to0}\int_{s}^{t}{\cal R}_\diff(\rho^{\eps,\sigma},f^{\eps})\dd\tau & \leq\int_{s}^{t}{\cal R}_\diff(\rho,f)\dd\tau,\\
\limsup_{\sigma\to0}\limsup_{\eps\to0}\int_{s}^{t}S_\diff(\rho^{\eps,\sigma})\dd\tau & \leq\int_{s}^{t}S_\diff(\rho)\dd\tau,\\
\limsup_{\sigma\to0}\limsup_{\eps\to0}\int_{s}^{t}{\cal R}_\react(\rho^{\eps,\sigma},j^{\eps})\dd\tau & \leq\int_{s}^{t}{\cal R}_\react(\rho,j)\dd\tau,\\
\limsup_{\sigma\to0}\limsup_{\eps\to0}\int_{s}^{t}S_\react(\rho^{\eps,\sigma})\dd\tau & \leq\int_{s}^{t}S_\react(\rho)\dd\tau.
\end{align*}
We will treat all four estimates and also the convergences $\sigma\to0$ and $\eps\to0$ separately. In each term we will consider the limit $\eps\to0$ first, sending $\sigma\to0$ afterwards.\\
{\bf 3a (Diffusive terms): } The rate term $\calR_\diff$ as well as the slope term $\calS_\diff$ are convex functionals. Hence, the upper limit bound for $\eps\to0$ follows by Jensen's inequality (see e.g.\ \cite[Lem.\,8.1.10]{AGS}) together with $\nabla\rho_i^{\eps}\rightharpoonup\nabla\rho_i$, $j^{\eps}_r\rightharpoonup j_r$ in $\rmL^{1}([0,T])$ and $\rho_i^{\eps}\rightharpoonup\rho_i$. For the limit $\sigma\to 0$, we simply observe that $\rho_i+\sigma\geq\rho_i$
which implies that $\frac{|f_i|^{2}}{\rho_i+\sigma}\leq\frac{|f_i|^{2}}{\rho_i}$. Moreover, we have $\nabla \sqrt{\rho_i+\sigma} = \frac{\sqrt{\rho_i}}{\sqrt{\rho_i+\delta}}\nabla\sqrt{\rho_i}\leq \nabla\sqrt{\rho}$.
This proves the desired estimate for the diffusive terms, both the slope and the rate term.\\
{\bf 3b (Reactive rate term): } The limit $\eps\to 0$ was shown in 
Proposition~\ref{prop:React.Commutator.Estim}, where we now rely on 
Assumption \eqref{ass:Reactions2*} with $p=\pcrit$. 
For the limit $\sigma\to 0$, we again use the 
monotonicity of the perspective function, 
to get the pointwise bound in the integrand
\[
\CC\left(j_r\Big|\sqrt{\left(\rho_i+\sigma\right)^{\alpha}\left(\rho+\sigma\right)_i^{\beta}}\right)\leq\CC\left(j_r\Big|\sqrt{\rho_i^{\alpha}\rho_i^{\beta}}\right).
\]
{\bf 3c (Reactive slope term):  } 
We use the general fact that a continuous function $\Phi:\R^I \to \R$ satisfying the growth estimate $|\Phi(u)| \leq C(1{+}|u|)^r$ defines via $u\mapsto \Phi\circ u$ a (strongly) continuous Nemitskii operator from $\rmL^{rq}(\Omega)$ into $\rmL^q(\Omega)$ for all $q\geq 1$. 

The reactive slope is the sum $\calS_\react= \sum_{r\in R}\calS_r$ with 
\[
\calS_r(\rho) = 2\kappa_r \iint_\OmegaT \!\!\Big( \omega^{(\beta^r-\alpha^r)/2}\rho^{\alpha^r} - 2 \rho^{(\alpha^r+\beta^r)/2}  + \omega^{(\alpha^r-\beta^r)/2}\rho^{\beta^r} \Big)\dd x \dd t\eqqcolon \calS_r^1(\rho)+ \calS^2_r(\rho)+ \calS^3_r(\rho). 
\]
By Assumption \ref{ass:Reactions2*} all three terms define strongly continuous mappings from $\rmL^{\pcrit}$ into $\rmL^1$, which implies that $\calS_\react=\sum_{r \in R}\calS_r$ is strongly continuous from $\rmL^{\pcrit}$ into $[0,\infty)$. Thus, 
using $\rho^{\eps,\sigma} \to \rho$ in $\rmL^{\pcrit} $ we can pass to the limit 
$\eps,\sigma\to 0$, and the result follows.
\end{proof}

It remains to prove Proposition \ref{prop:CEDB.vs.CRDS}, relating the notions of continuum EDB solutions and weak solutions with each other.

\begin{proof}[Proof of Proposition \ref{prop:CEDB.vs.CRDS}]
Under the condition $\rho_i \in [\sigma , 1/\sigma]$ it is standard to show that 
weak solutions are continuum EDB solutions. Indeed, for $i\in I$, we start from  the definition of weak solutions with $\varphi \in \rmL^2(0,T;\rmH^1(X))$ in the form 
\[
0= \int_0^T\! \big\langle \partial_t \rho_i,\varphi\big\rangle \dd t
+ \int_\OmegaT \delta_i \rho_i \nabla \log\Big(\frac{\rho_i}{\omega_i}\Big) \cdot 
\nabla \varphi + \sum_{r\in R}\gamma^r_i \kappa_r \omega^{(\alpha^r+\beta^r)/2}  \Big( \frac{\rho^{\alpha^r}}
{\omega^{\alpha^r} }  - \frac{\rho^{\beta^r}}{\omega^{\beta^r}} \Big)\varphi \dd x \dd t.
\]
Using $\rho_i \in [\sigma,1/\sigma]$ we are allowed to choose the test function $\varphi (t)= \log(\rho_i/\omega_i)$ for $t \in [t_1,t_2]$ and $0$ otherwise. 

Summing over $i\in I$ and using the classical chain rule for $\calE$ (now evaluated 
only on the interval $[\sigma/\omega^*, 1/(\omega_*\sigma)] \subset (0,\infty)$\,), we obtain 
\[
\calE(\rho(t_1)) - \calE(\rho(t_2)) + \int_{t_1}^{t_2}\!\!\int_{\bbT^d} 
\nablaCont \rmD\calE(\rho) \bullet (f,j) \dd x \dd t = 0 
\]
where $f=(f_{i})_i$ and $ j= (j_r)_r$ are given as in \eqref{eq:CR_cont}. By the definitions of $\calS$ and $\calR$ in the Definitions \ref{def:slope_cont} 
and \ref{def:R_cont} with $ \calS(\rho)= \calR^*(\rho, - \nablaCont \rmD \calE(\rho))$ 
(as $\rho_i\geq \sigma$), we have the identity 
$ \nablaCont \rmD\calE(\rho) \bullet (f,j) = \calR(\rho,f,j)+ \calS(\rho)$, 
which implies that $ (\rho,f,j)$ is a continuum EDB solution. 

For the opposite direction, we start from a continuum EDB solution $(\rho,f,j)$ such that we have $(\rho,f,j)\in \CE$, and $\calD(\rho,f,j)< \infty$, which, under the assumption $\rho_i \in 
[\sigma, 1/\sigma] $, imply the regularity
\[
\rho\in \rmL^2(0,T;\rmH^1(\bbT^d)), \quad 
f\in \rmL^2(\OmegaT), \quad j \in \rmL^1(\OmegaT) , \quad 
\partial_t \rho  \in \rmL^2(0,T;\rmH^{-1}(\bbT^d)) + \rmL^1(\OmegaT).  
\]
Moreover, the derivative $\rmD \calE(\rho) = \big( \log(\rho_i/\omega_i)\big)_{i\in I}$ is well-defined in $ \rmL^2(0,T;\rmH^1(\bbT^d))$ such 
that $ \calS(\rho)=\calR^*\big(\rho, - \nablaCont \rmD\calE(\rho)\big)$. Together, this is enough to establish the chain rule 
\begin{align*}
\frac\rmd{\rmd t} \calE(\rho(t)) & 
 = \langle \partial_t \rho, \rmD\calE(\rho) \rangle
 =  \langle \nablaCont \rmD\calE(\rho),(f,j)\rangle 
\\
&=\int_{\bbT^d} \sum_{i\in I} \nabla \log\Big(\frac{\rho_i}{\omega_i}\Big)
 \cdot f_i + \log\Big(\frac{\rho}{\omega}\Big) \bullet 
   \nablaCReact^* j \dd x.
\end{align*}
Inserting this into the relation $\calL(\rho,f,j)=0$ for continuum EDB solutions and  using that $ \calS(\rho)=\calR^*\big(\rho, - \nablaCont \rmD\calE(\rho)\big)$, we obtain
\[
\langle \nablaCont \rmD\calE(\rho),(f,j)\rangle  = \calR(\rho,f,j) + 
\calR^*\big(\rho, - \nablaCont \rmD\calE(\rho)\big) \ \text{ for a.a.\ } t\in [0,T].
\]
Since $\rho_i\geq \sigma>0$, we conclude $(f,j) = \rmD_{(\xi,\zeta)} \calR^*\big(\rho, - \nablaCont \rmD\calE(\rho)\big)$ which provides the desired flux relations \eqref{eq:CR_cont} a.e.\ in $[0,T]\times \bbT^d$.  The fact that $\rho$ is a weak solution follows now from the fact that $(\rho,f,j)$ satisfies the continuity equation $\CE$ in the sense of distributions, i.e., 
$\partial_t \rho =\nablaCont^* (f,j)$. 
\end{proof}

\appendix 
\section{Proof of the magical estimate 
\texorpdfstring{\eqref{eq:CC.prop.d}}{(3.6c)}}
\label{app:CC.estim}

Throughout, we consider $p>1$. With $\LB(r)=r \log r - r +1$ and 
$U_p(w)=\frac1{p(p{-}1)}(w^p-pw+p-1)$ we have the identity  
\begin{align}
    \label{eq:LB.p.LB}
w\LB\big(\frac cw\big) = \frac{p{-}1}p \LB(c) - (p{-}1) U_p(w) + \frac1p w^p
\LB\big(\frac c{w^p}\big) \geq \frac{p{-}1}p \LB(c) - (p{-}1) U_p(w),
\end{align}
see \cite[Eqn.\,(2.7)]{FHKM22GEAE} for an earlier occurrence. Our function $\C$ 
is the convex conjugate of $\C^*$, which is the sum of two exponentials. Hence, 
$\C$ can be written as an infimal convolution, namely 
\begin{equation}
\label{eq:CCC.convol}
\C(s) = \min\big\{2\LB(a_1)+2\LB({-}a_2) \,\big| \,  a_1+ a_2= s \big\}. 
\end{equation}

Combining this representation with \eqref{eq:LB.p.LB} we obtain a 
lower estimate on $\CC(a|w)=w\C(a/w)$ that corresponds to \eqref{eq:LB.p.LB}. 

\begin{proposition}[Magical estimate for $\CC$]
\label{prop:MagEst} 
For all $s\in \R$, $w>0$, and $p>1$ we have 
\[
 \CC(s|w) \geq \begin{cases}  \C(s) & \text{for } w\in  [0,1], \\ 
\frac{p{-}1} p\, \C(s) - 4(p{-}1)U_p(w)&\text{for }w\geq 1 . 
\end{cases}.
\]
\end{proposition}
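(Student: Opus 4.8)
The plan is to split according to the two ranges of $w$, with the case $w\in[0,1]$ being immediate from monotonicity and the case $w\ge 1$ being the substantive one, assembled from the two ingredients already prepared in the excerpt: the scalar estimate \eqref{eq:LB.p.LB} and the infimal-convolution representation \eqref{eq:CCC.convol}.

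For $w\in[0,1]$ I would simply invoke \eqref{eq:CC.prop.b}: the map $w\mapsto\CC(s|w)$ is non-increasing, and $\CC(s|1)=\C(s)$ by definition, so $\CC(s|w)\ge\CC(s|1)=\C(s)$ for all $w\le 1$. The endpoint $w=0$ is harmless since $\CC(s|0)=\chi_0(s)\in\{0,+\infty\}$ always dominates the finite value $\C(s)$ (both equal $0$ when $s=0$). This settles the first branch.

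For $w\ge 1$ the idea is to push \eqref{eq:LB.p.LB} through the infimal convolution. I would take a minimizing pair $(a_1,a_2)$ with $a_1+a_2=s/w$, $a_1\ge 0$, $a_2\le 0$ in \eqref{eq:CCC.convol} evaluated at $s/w$, so that
\[
\CC(s|w)=w\,\C(s/w)=2\,w\LB(a_1)+2\,w\LB(-a_2).
\]
Applying \eqref{eq:LB.p.LB} once with $c=wa_1\ge 0$ (giving $c/w=a_1$) and once with $c=-wa_2\ge 0$ (giving $c/w=-a_2$), where the dropped remainder $\tfrac1p w^p\LB(c/w^p)$ is non-negative because $\LB\ge 0$, yields
\[
w\LB(a_1)\ge \tfrac{p-1}{p}\LB(wa_1)-(p{-}1)U_p(w),\qquad w\LB(-a_2)\ge \tfrac{p-1}{p}\LB(-wa_2)-(p{-}1)U_p(w).
\]
The decisive recombination is that $wa_1+wa_2=s$, so $(b_1,b_2):=(wa_1,wa_2)$ is again admissible in \eqref{eq:CCC.convol}, now for the argument $s$. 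Summing the two inequalities (with their factors $2$) and using $2\LB(b_1)+2\LB(-b_2)\ge\C(s)$, I obtain
\[
\CC(s|w)\ge \tfrac{p-1}{p}\bigl(2\LB(b_1)+2\LB(-b_2)\bigr)-4(p{-}1)U_p(w)\ge \tfrac{p-1}{p}\,\C(s)-4(p{-}1)U_p(w),
\]
which is exactly the asserted bound.

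I expect the only genuinely non-mechanical point to be this \emph{matching of scales}: one must feed \eqref{eq:LB.p.LB} the rescaled arguments $wa_1$ and $-wa_2$ rather than $a_1,-a_2$, precisely so that after the estimate the $\LB$-arguments sum back to $s$ and the infimal convolution regenerates $\C(s)$. A secondary bookkeeping point is the attainment of the minimum in \eqref{eq:CCC.convol}; this is legitimate because $\C^*(\zeta)=(2\mathrm{e}^{\zeta/2}{-}2)+(2\mathrm{e}^{-\zeta/2}{-}2)$ splits as the sum of the Legendre duals of the proper, coercive, convex functions $a\mapsto 2\LB(a)$ and $a\mapsto 2\LB(-a)$, but if one prefers to avoid this one may argue with a minimizing sequence and pass to the limit. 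The remaining verifications—the domain constraints $a_1\ge 0\ge a_2$ and $U_p\ge 0$ on $[1,\infty)$—are routine.
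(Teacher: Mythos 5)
Your proof is correct and follows essentially the same route as the paper: both cases rest on the monotonicity \eqref{eq:CC.prop.b} for $w\le 1$ and, for $w\ge 1$, on pushing \eqref{eq:LB.p.LB} through the infimal-convolution representation \eqref{eq:CCC.convol} and recombining to recover $\tfrac{p-1}{p}\,\C(s)-4(p{-}1)U_p(w)$. The only cosmetic difference is that you extract a minimizer at scale $s/w$ and rescale it to an admissible pair for $s$, whereas the paper keeps the whole chain inside the minimum by parametrizing it with $a_1+a_2=s$ and evaluating $\LB$ at $a_i/w$ — the same scale-matching, written in place.
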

\begin{proof} 
The estimate for $w\in [0,1]$ follows directly from the monotonicity 
\eqref{eq:CC.prop.b}. 

For $w\geq 1$ we exploit the infimal convolution \eqref{eq:CCC.convol} and 
\eqref{eq:LB.p.LB} to obtain the following chain of estimates:
\begin{align*}
 \CC(s|w)& \overset{\text{\eqref{eq:CCC.convol}}}= \  
  2\,\min_{a_1+a_2=s} \big\{w\LB(a_1/w) + w\LB({-}a_2/w) \}\\
& \geq 2 \,\min_{a_1+a_2=s} \big\{
  \tfrac{p{-}1}p \LB(a_1) - (p{-}1)U_p(w) + \tfrac{p{-}1}p \LB({-}a_2) -
  (p{-}1)U_p(w) \big\} \\
&  \overset{\text{\eqref{eq:CCC.convol}}}= \ \frac{p{-}1}p \,\C(s) - 4(p{-}1)U_p(w). 
\end{align*}
This is the desired estimate for $w\geq 1$. 
\end{proof}

The desired magical estimate \eqref{eq:CC.prop.d} now follows from $U_p(w) 
\leq w^p/(p(p{-}1))$ for $w\geq 1$ and Proposition \ref{prop:MagEst} 
by rearranging the estimate. 

The main usage of the estimate is in the integrated form namely 
\[
\int_\Omega \C(s) \dd \mu \leq \frac p{p{-}1} \int_\Omega \CC(s| w) \dd \mu + \frac4{p{-}1} \int_\Omega w^p \dd \mu. 
\]
The following example shows that estimating the integral on the left-hand 
side by the two integrals on the right-hand side is not possible for the case 
$p=1$, i.e.\ $p>1$ is essential.  We give an example with 
$\int_\Omega \CC(s|w) \dd \mu + \int_\Omega \LB(w) \dd \mu <\infty$ but 
$\int_\Omega \C(s)\dd \mu=\infty$. 

\begin{remark}[Counterexample]\label{rem:App.Counterexa} 
We let $\Omega={]0,1/2[}$, take $\mu=\mathfrak \rmL^1$ and choose 
\[
s(x)=\frac1{x \big(\log (1/x)\big)^\gamma} \quad \text{and} \quad 
w(x)=\frac1{x \big(\log (1/x)\big)^\omega} \quad \text{with }
1<\gamma<2<\omega . 
\]
This gives $s \in \rmL^1(\Omega)$,
$\int_\Omega \LB(w) \dd x<\infty$, and $\int_\Omega \C(s) \dd x =\infty$.
With $s(x)/w(x)=\big(\log(1/x)\big)^{\omega-\gamma}$ and $\C(r)\approx r\log
(1{+}r)$ for $r\gg 1$ we find $\int_\Omega \CC(s|w) \dd x <\infty.$ 
\end{remark}

\section{Superlinear functions: Proof of Lemma \ref{lem:Superlinear}}
\label{app:Superlinear}

Lemma \ref{lem:Superlinear} involves the superlinear functions $\phi$ and $\psi$ 
and constructs another superlinear function $\Xi=\Xi_{\phi,\psi}$. It is a 
generalization of \eqref{eq:CC.prop.d} which corresponds to $\phi=\C$ and 
$\psi(w)=cw^p$ with $p>1$. Then $\psi_\C$ can be estimated below by $c_p\sfC$. 

It is easy to see that $\Xi$ is even and increasing on $[0,\infty)$ as 
$s\mapsto w \phi(s/w)$ is so for each $w>0$. As $\psi$ is increasing and 
$w\mapsto w\phi(s/w)$ is decreasing (as $s\phi'(s) \geq \phi(s)$)  
we have the lower estimate
\[
\Xi (s) \geq \min\{ w_* \phi(s/w_*), \psi(w_*)\} \quad \text{for all } w_*>0, 
\]
and it remains to choose $w_*$ appropriate for each $s$. 

The superlinearity of $\psi$ provides for each $M>1$ a $w_M\geq 1$ such that 
$\psi(w_M)\geq M w_M$.  For $s_M=M^{1/2} w_M$ and using $\CC(s|w)=w\C(s/w) $ 
we thus obtain
\[
\frac{\Xi(s_M)}{s_M} 
\geq \min\Big\{\frac{w_M \phi(s_M/w_M)}{s_M}, \frac{\psi(w_M)}{s_M} \Big\} 
= \min\Big\{\frac{\phi(M^{1/2})}{M^{1/2}}, M^{1/2} \Big\} \to \infty  
\]
for $M\to \infty$, which implies $s_M\to \infty$. As $\Xi$ is increasing on 
$[0,\infty)$, the desired superlinearity of $\Xi$ and 
Lemma \ref{lem:Superlinear} are established.

\section{Gagliardo-Nirenberg}
\label{app:GagliaNirenb}

To prove Proposition \ref{prop:integrability}, we will use a variant of the Gagliardo-Nirenberg estimate 
handling spatial and temporal integrability according to the 
a priori estimates from the $\rmL^\infty$ bound for the energy and the $\rmL^2$
bound for the dissipation. We will use the classical 
dimension-dependent Gagliardo-Nirenberg estimate 
\[
\|u\|_{\rmL^q(\bbT^d)} \leq C_{q,d} \| u\|_{\rmL^2(\bbT^d)}^{1-\theta_q} 
  \|u\|_{\rmH^1(\bbT^d)}^{\theta_q} \quad
\text{ with } \theta_q = \frac d2- \frac dq,
\]
where $q\in [2,\infty)$ and $(d{-}2)q\leq 2d$. With this, 
we obtain for
$\alpha>0$ and $r\geq 1$ with $\alpha r\geq 2$ and $(d{-}2)\alpha r\leq 2d$
the estimate   
\begin{equation}
    \label{eq:SpecialGN}
\begin{aligned}
\iint_\OmegaT \! u^\alpha v \dd x \dd t &\leq \int_0^T\!\! \|
u(t)\|_{\rmL^{\alpha r}(\bbT^d)}^\alpha \| v(t)\|_{\rmL^{r'}(\bbT^d)} \dd t 
  \\ &
\leq C^\alpha_{\alpha r,d} \| v\|_{\rmL^\infty([0,T];\rmL^{r'}(\bbT^d))} \| 
u\|^{\alpha(1-\theta_{\alpha r})}_{\rmL^\infty([0,T];\rmL^2(\bbT^d))} \int_0^T\! \|u(t)\|^{\alpha
    \theta_{\alpha r}}_{\rmH^1(\bbT^d)}  \dd t,
\end{aligned}
\end{equation}
where $r' = r/(r{-}1)$ is the dual exponent of $r$.

\bibliographystyle{alpha_AMs}
\bibliography{bibliography}

\end{document}